\documentclass[a4paper,11pt]{amsart}
\usepackage{amsmath,amsfonts,amssymb}
\usepackage{amsthm}
\usepackage{ulem}
\usepackage{graphics,graphicx,epsf}
\usepackage[usenames]{color}
\usepackage{color}
\usepackage{hyperref}
\usepackage{float}
\usepackage{enumerate}
\usepackage[shortlabels]{enumitem}
\usepackage{tikz}
\usetikzlibrary{arrows}
\usepackage{csquotes}

\usepackage[left=2.22cm,right=2.22cm,top=2.9cm,bottom=2.9cm]{geometry} %Define as margens do documento

\newcommand{\cH}{\mathcal{H}}

\newtheorem{Assumption}{Assumption}[section]
\newtheorem{theorem}{Theorem}[section]
\newtheorem{lemma}[theorem]{Lemma}
\newtheorem{proposition}[theorem]{Proposition}

\newtheorem{remark}[theorem]{Remark}
\newtheorem{definition}{Definition}[section]

\numberwithin{equation}{section}

\numberwithin{equation}{section}

\title[Wave equation with quintic nonlinearities]{Stabilizing  energy-critical wave equation to a finite dimensional attractor via nonlinear damping}

\author[Irena Lasiecka]{Irena Lasiecka$^{\star}$}\thanks{$^{\star}$ Research partially supported by National Science Foundation: NSF-DMS-
	2205508.}
\address[Irena Lasiecka]{Department of Mathematical Sciences, The University of Memphis, 3725 Norriswood Ave, Memphis, TN 38152 United States }
%IBS, Polish Academy of Sciences, Warsaw.}
\email{lasiecka@memphis.edu}

\author[Vando Narciso]{Vando Narciso$^{\dagger}$}\thanks{$^{\dagger}$ Research partially supported by Fundect/CNPq Grant 15/2024.}
\address[V. Narciso]{Universidade Estadual de Mato Grosso do Sul, 79804-970, Dourados, MS, Brazil}
\email{vnarciso@uems.br}

\date{\today}

\begin{document}

\begin{abstract}
	The wave equation with  energy critical sources and  nonlinear damping defined on  a  3D bounded domain is considered. It is shown that the resulting dynamical system admits a global attractor. Under the  additional assumption of strong monotonicity of the damping  at the origin, it is shown that the originally unstable quintic wave is uniformly stabilised
	to a  finite dimensional and smooth set. Moreover, the existence of exponential attractor is established. In order to handle \enquote{energy criticality} of both sources and damping,  the methods used depend on enhanced dissipation \cite{Bociu-lasiecka-jde}, energy {\it identity} for weak solutions \cite{Koch-lasiecka}, an adaptation of Ball's method \cite{ball}, and the theory of quasi-stable systems \cite{chueshov-white}.

	\vskip .1 in \noindent {\it Mathematical Subject Classification 2020:} 35B33; 35B40; 35B41; 35L05; 35L70.
	\newline {\it Keywords:}  Wave equation with quintic source, finite dimensional attractor; nonlinear damping.
	
	% 35B41 - Attractors
	% 35B40 - Asymptotic behavior of solutions to PDEs
	% 35L71 - Second-order semilinear hyperbolic equations
	% 37L65 - Special approximation methods (nonlinear Galerkin, etc.) for infinite-dimensional dissipative dynamical systems
\end{abstract}	
\maketitle	
\section{Introduction}
Let $\Omega \subset \mathbb{R}^3$ be  a bounded domain with  a sufficiently smooth boundary $\Gamma=\partial \Omega$.  The following classical   {\it energy critical}  wave model is under consideration in this work:
\begin{eqnarray}\left\{\begin{aligned}\label{P}	
		&u_{tt}-\Delta u+g(u_t)+f(u)= h,\quad \mbox{in}\quad \Omega\times \mathbb{R}^+,\\
		&u|_{\Gamma}=0,\quad \mbox{on}\quad \Gamma=\partial\Omega,\\
		&u(x,0)=u_0(x),\quad u_t(x,0)=u_1(x),\quad x\in \Omega,\end{aligned}\right.\end{eqnarray}
where $g$ is a nonlinear  monotone damping  of critical,  quintic nature: $|g(s)|\leq M  |s|^5 $ and the source  $f$ is  also a nonlinear critical source with quintic upper bound: $|f(s)| \leq M|s|^5, \,|s|>1 $. 
Quintic wave equations, which include nonlinear terms to the fifth power, are used to model complex physical phenomena like solitons in optics and plasma physics, vibrations in materials, fluid dynamics, and the dynamics of wave propagation in various media. They are crucial for understanding and predicting the behavior of nonlinear systems and help in designing technologies in fiber optics, plasma confinement, solid state physics and material science [molecular vibrations and wave propagation in harmonic crystals],and nonlinear circuits. 

A study of wave equation in 3 dimensions with a quintic source have attracted a lot of activities. Quintic term is energy-critical for the wave equation in 3 dimensions, thus a host of mathematical issues  starting with  well-posedness  of weak solutions. While local well-posedness can be obtained via several methods, {\it global solvability} has been a source of major challenges. And here is the reason why. The strategy used for establishing existence of weak solutions to undamped wave equation relies heavily on Strichartz estimates which allow to trade differentiability into suitable integrability. However, Strichartz estimates are constructed for each solution [so called Strichartz solution] $u$ on time interval $[0,T]$. These estimates loose uniformity with respect to bounded set of initial data when time $T \rightarrow \infty $, unless nonlinearity is {\it subquintic}. The above predicament, particularly sensitive to long time behavior of the models, stimulated great amount of research -see  \cite{Kalantarov-Savostianov-Zelik}. To address this issue, methods relaying on \enquote{nonconcentration} of singularity in S-estimates have been developed \cite{Burq-Lebeau}. These allow to extend local solutions to global ones.
Additional arguments relying on the so called backward regularity \cite{zelik} lead to S-solutions forming  a dynamical system which eventually is shown  to have  a global attractor. These methods rely on finite speed of propagation which can be also proved   in the presence of {\it  linearly bounded dissipation}-\cite{Kalantarov-Savostianov-Zelik}. However, in the case of  highly nonlinear dissipation, proving globality and uniqueness  of solutions  by using the above described method appears challenging.\\
This brings us to the topic of the present paper.  Our aim is, in addition to the study of  attracting sets, to construct  stabilizing effect of nonlinear damping which 
may eventually bring [in the uniform topology of the phase space] the originally unstable hyperbolic dynamics to a {\it finite dimensional}  configuration -characterised by a compact set-attractor.  Such property is of critical value within the realm of control theory, where finite dimensional control methods could be applied to forge  a desirable outcome for the system. 
To accomplish this, we consider damping also of critical exponent  with an enhanced dissipation effect. The dea goes back to \cite{vitillaro} and \cite{bociu1,Bociu-lasiecka-jde,Lorena-lasiecka} where such dynamics has been considered.  In fact, the enhanced dissipation was used  in order to establish well-posedness, particularly uniqueness of weak solutions corresponding to critical and supercritical sources..
The estimates in these references provide an initial  road map to our analysis, which eventually culminates
with the theory of coherent  attracting sets.
Thus, our goal is to take the problem to the next level by considering long time behavior-in particular existence and a structure of attracting sets including  {\it finite dimensionality and an existence of exponentially attracting sets}. This will be accomplished by developing appropriate tools for
asymptotic smoothness [particularly in the degenerate case when $g'(0)=0$], finite-dimensionality and maximal regularity  of the attractors. The challenge here is due to double criticality of the source and the dissipation. Criticality of the source  makes potential energy functional $\int_{\Omega} F(u(x)) dx $
with $F'=f $ noncompact, thus preventing methods of compensated compactness from applicability. Criticality of the dissipation implies $g(u_t)\in H^{-1}(\Omega) $, but  $g(u_t) \notin  H^{-\alpha}(\Omega) , \alpha <1 $. The latter is the critical regularity for variational solutions to be well defined with compact test functions in $H^1(\Omega) $. Our final result  provides an existence of global attractor which, moreover, is finite-dimensional and smooth along with   an existence of exponential attractor is also established. Thus, quintic waves are stabilised uniformly/exponentially to finite dimensional compact sets.

\section{Functional Setting, Assumptions and Main Results}
\subsection{Functional Setting and the Assumptions Imposed}
First of all, we will define some notations that will be used throughout the work.  We denote by  $H^s(\Omega)$ the $L^2-$ based Sobolev's space of the order $s$ and by $H^s_0(\Omega)$ the closure of $C^{\infty}_0(\Omega)$ in $H^s(\Omega)$. 
We also denote  
$\left(u,v\right)=(u,v)_{L^2(\Omega)}$ and 
$\|u\|^2=\|u\|^2_{L^2(\Omega)}$ for the inner product and norm in 
$H^0:=L^2(\Omega)$, and 
$((u,v))=\left(\nabla u,\nabla v\right)$ and 
$\|u\|^2_{H^1_0(\Omega)}=\|\nabla u\|^2$
for those in 
$H^1:=H^1_0(\Omega)$.
We consider the Laplacian operator $-\Delta$ defined by the triplet $\{H^1,H^0,a(u,v)\}$ where $a(u,v)$ is the bilinear continuous form on $H^1\times H^1$ given by
$a(u,v)=\left(\nabla u,\nabla v\right)$, $u,v\in H^1.$
With respect to the phase space corresponding to the weak solutions of problem \eqref{P}, our analysis is carried out in the Sobolev space
$\mathcal{H}:=\mathcal{H}_0=H^1\times H^0,$ equipped with the standard  norm
$||U||^2_{\mathcal{H}}:=\|\nabla u\|^{2}+\|v\|^2,\quad U\equiv (u,v).$

We assume the following hypotheses imposed on the functions $h$, $g$ and $f$.
\begin{Assumption}\label{Assumption} $h\in H^0$ -  an external force, $f,g$  are subject to  the following conditions:
	\begin{itemize}
		\item $g\in C^1(\mathbb{R})$ is an increasing function such that $g(0)=0$, and there exist two positive constants $\kappa_0$ and $\kappa_1$ such that
		\begin{align}
			\label{hyp_g'}\kappa_0|s|^{4}\le g'(s)\le \kappa_1\left(1+|s|^{4}\right),\quad \forall s\in \mathbb{R}.
		\end{align}
		\item  $f\in C^2 (\mathbb{R})$, $f(0)=0$ and  there exists a positive constant $L_f>0$ such that
		\begin{eqnarray}
			\label{hyp_f''}\left|f''(s)\right|\le L_f\left(1+|s|^{3}\right),\quad \forall s\in \mathbb{R}.
		\end{eqnarray}
		The following standard  dissipative condition is imposed.
		\begin{align}\label{hyp-inf-f}\lim_{|s|\rightarrow \infty}\frac{f(s)}{s}>-\lambda_1\end{align}
		where $\lambda_1$ is the first eigenvalue of the Laplace operator  $-\Delta$ with Dirichlet boundary data.
	\end{itemize}
\end{Assumption}

\begin{remark} From Mean Value Theorem and \eqref{hyp_f''}, we get
	\begin{eqnarray}\label{hyp_f'}
		|f'(s)|\le C_f(1+|s|^4),\quad \forall s\in \mathbb{R},
	\end{eqnarray}
	for some constant $C_{f}>0$.
	Condition \eqref{hyp-inf-f} implies that there exists a constant $\nu\in [0,\lambda_1)$ and $C_{\nu}>0$ such that $F(s):=\int_0^sf(\tau)d\tau$ satisfies:
	\begin{eqnarray}
		\label{hyp_f2}-C_{\nu}-\frac{\nu}{2}|s|^2\le F(s)\le f(s)s+\frac{\nu}{2}|s|^2,\quad \forall s\in \mathbb{R},
	\end{eqnarray}
	
\end{remark}
Throughout the article we will use $0<\omega\le 1$ as the constant defined by
\begin{align}
	\label{def-omega}\omega:=1-\frac{\nu}{\lambda_1}>0.
\end{align}
Both sources and the dissipation are referred  to  as  critical in 3 dimensions. The criticality of the Sobolev's embedding $H^1\hookrightarrow L^6(\Omega)$ implies the loss of compactness of the potential energy $\int_{\Omega}F(u(x))dx$  along  with the loss of compactness of the damping $g(u_t) $ from  $H^1\rightarrow H^{-1} $.

\subsection{Statements of the  Main Results}
Of main interest to this paper are the so called \enquote{weak solutions}. There are several notions of \enquote{weak solutions} which are often refereed to as \enquote{finite energy solutions}. However, in the critical case one needs to be careful regarding precise meaning of weak solutions. While proving existence of strong [regular solutions] is rather routine, existence and the properties of weak solutions are more elusive in the critical case. Typical notion would be the so called \enquote{generalized} solutions which are defined as strong limits of semigroup regular solutions. Under certain conditions, such solutions can be shown to satisfy variational form of equations-the so called \enquote{weak solutions}. Weak solutions are not necessarily limits of strong solutions [unless some compactness related properties are assumed]- so these are of interest to the study in \enquote{critical} situations. Here, by critical we refer to the property that potential energy is just $L^1$ integrable without any compactness properties assumed in prior literature \cite{fereisl,vitillaro} and references therein. In what follows, we shall adopt rather classical definition -following \cite[Definition 2.1]{Lorena-lasiecka} where solutions to (\ref{P}) under the Assumption \ref{Assumption} were treated.
\begin{definition}\label{weak-solution} For any $T>0$, a function $U(t)=(u(t),u_t(t))\in C_w(0,T;\mathcal{H})$, such that $u_t\in L^6(Q)$ and possessing the property $U(0)=(u(0),u_t(0))=(u_0,u_1)\in \mathcal{H}$ is said to be a {\bf weak energy solution} to problem \eqref{P} on interval $[0,T]$ iff Eq. \eqref{P} is satisfied in the following variational sense:
	% sense of distribution. The latter means more specifically  that the variational problem
	\begin{align}
		\label{variational-problem}
		\int_0^T\int_{\Omega}\left[\,-u_{t}\psi_t+\nabla u\nabla\psi +g(u_t)\psi+f(u)\psi\,\right]dxdt=-\left.\int_{\Omega}u_t\psi dx\right|_0^T+\int_0^T\int_{\Omega}h\psi dxdt,
	\end{align}
	holds any $\psi \in C(0,T;H^1)\cap C^1(0,T;H^0)\cap L^6(Q)$.
\end{definition}
Here $Q:=\Omega\times (0,T)$ and $C_w(0,T,Y)$ denotes the space of weakly continuous functions with values in a Banach space $Y$.

The first result refers to an existence and properties of weak solutions-defined in  Definition \ref{weak-solution}.
\begin{theorem} \label{theo-global}
	In reference to problem \eqref{P}, under Assumption \ref{Assumption}, the following statements hold.
	\begin{itemize}
		\item[\bf (I)] {\bf Weak solutions:} If initial data $U_0\in \mathcal{H}$, then problem \eqref{P} has a weak solution $U=(u,u_t)$ in the sense of Definition \ref{weak-solution}.
		\item[\bf (II)] {\bf Energy equality:} The weak solutions satisfy the energy equality
		\begin{align}\label{energy-equality}E(U(t))+\int_s^t\int_{\Omega}g(u_t)u_tdxds=E(U(s)),\quad \mbox{for all}\quad t\ge s\ge0,\end{align}
		where $E(U)$ is the energy functional associated with problem \eqref{P} given by the formula
		\begin{align}
			\label{functional-energy}E(U(t)):=\frac{1}{2}||U(t)||^2_{\mathcal{H}}+\int_{\Omega}F(u)dx-\int_{\Omega}hudx,
		\end{align}
		%with $\displaystyle{F(u):=\int_0^uf(\tau)d\tau.}$
		\item[\bf(III)] {\bf Hadamard well-posedness :} Let $U^1=(u^1,u^1_t)$ and $U^2=(u^2,u^2_t)$ be two weak solutions of problem \eqref{P} with $U^i(0)=(u^i_0,u^i_1)=U^i_0\in \mathcal{H}$, $i=1,2$. Then
		\begin{align}\label{Lipschitz-property}
			||U^1(t)-U^2(t)||_{\mathcal{H}}\le e^{Ct}||U^1_0-U^2_0||_{\mathcal{H}},\quad t>0,
		\end{align}
		where the constant $C=C(||U^{i}_0||_{\mathcal{H}})>0$, $i=1,2$. In particular, if $U^1_0=U^2_0$ the  only solution is zero solution, so  weak  solutions are unique.
	\end{itemize}
	\begin{remark}
		Since weak solutions are constructed via Galerkin approximations, the uniqueness statement  in Theorem \ref{theo-global}  applied to weak solutions implies  that all weak solutions can be constructed as Galerkin's approximations.
	\end{remark}
	
\end{theorem}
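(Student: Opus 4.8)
The plan is to establish the three assertions in turn, all built on a single Galerkin scheme whose uniform bounds come from the coercivity of the energy and, decisively, from the strong monotonicity lower bound in \eqref{hyp_g'}.

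\textbf{Existence (I).} I would fix the orthonormal basis of Dirichlet eigenfunctions of $-\Delta$, solve the finite-dimensional system for $u_n$ on a maximal interval by Caratheodory theory, and test with $u_n'$ to obtain the approximate energy identity $E(U_n(t))+\int_0^t\int_\Omega g(u_n')u_n'\,dx\,ds=E(U_n(0))$. The dissipativity estimate \eqref{hyp_f2} together with \eqref{def-omega} makes $E$ coercive, $E(U)\ge \tfrac{\omega}{2}\|U\|^2_{\mathcal{H}}-C$, so $u_n$ is bounded in $L^\infty(0,T;H^1)$ and $u_n'$ in $L^\infty(0,T;H^0)$ globally in time. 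The lower bound in \eqref{hyp_g'} yields $g(s)s\ge \tfrac{\kappa_0}{5}|s|^6$, whence $u_n'$ is bounded in $L^6(Q)$ and, since $|g(s)|\le M|s|^5$, $g(u_n')$ is bounded in $L^{6/5}(Q)$; likewise $f(u_n)$ is bounded in $L^\infty(0,T;L^{6/5})$ via \eqref{hyp_f'}. I would then extract weak(-$*$) limits and identify the nonlinearities: for the source, Aubin--Lions compactness gives $u_n\to u$ strongly in $C(0,T;H^0)$ and a.e.\ in $Q$, so $f(u_n)\rightharpoonup f(u)$ by continuity and the uniform $L^{6/5}$ bound; for the damping, I would invoke the Minty--Browder monotonicity trick, the step $\limsup_n\int_Q g(u_n')u_n'\le \int_Q \chi\,u_t$ being precisely the point that forces one to pass to the limit in the energy identity itself. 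This recovers the variational identity \eqref{variational-problem}.

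\textbf{Energy equality (II).} The inequality $E(U(t))+\int_s^t\int_\Omega g(u_t)u_t\,dx\,ds\le E(U(s))$ is immediate from weak lower semicontinuity. The reverse inequality is the delicate point: one would like to use $u_t$ as a test function in \eqref{variational-problem}, but criticality only gives $u_t\in L^6(Q)$, which is not an admissible multiplier in the class $C(0,T;H^1)\cap C^1(0,T;H^0)$. I would therefore follow the energy-identity argument of \cite{Koch-lasiecka}: mollify in time to render $u_t$ admissible, write the identity for the regularised equation, and pass to the limit, the crucial convergence of the dissipation term $\int_Q g(u_t)u_t$ being legitimate exactly because of the duality pairing $L^6(Q)\times L^{6/5}(Q)$ secured in part (I).

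\textbf{Hadamard well-posedness (III).} For two solutions I set $w=u^1-u^2$ and run the energy method on the difference equation, where the monotone damping contributes the nonnegative term $\int_\Omega(g(u_t^1)-g(u_t^2))w_t\,dx\ge 0$ and the strong monotonicity \eqref{hyp_g'} upgrades it to control of $\|w_t\|_{L^6}^6$. The obstacle is the critical source: writing $f(u^1)-f(u^2)=b\,w$ with $b=\int_0^1 f'(u^2+\tau w)\,d\tau$, a direct estimate only bounds $\int_\Omega (f(u^1)-f(u^2))w_t\,dx$ by $C(R)\|w\|_{H^1}\|w_t\|_{L^6}$, and absorbing the $L^6$ factor leaves the sub-quadratic power $\|w\|_{H^1}^{6/5}$, for which Gronwall fails to force uniqueness. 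To bypass this I would integrate the source term by parts in time, $\int_\Omega b\,w\,w_t\,dx=\tfrac12\tfrac{d}{dt}\int_\Omega b\,w^2\,dx-\tfrac12\int_\Omega b_t\,w^2\,dx$, and exploit $|b_t|\le C(1+|u^1|^3+|u^2|^3)(|u_t^1|+|u_t^2|)$ with the Holder splitting $L^2\cdot L^6\cdot L^3$ to obtain $|\int_\Omega b_t\,w^2\,dx|\le \phi(t)\|w\|_{H^1}^2$ with $\phi\in L^1(0,T)$, thanks to $u_t^i\in L^6(Q)$. A Gronwall argument on the resulting modified energy, following the enhanced-dissipation estimates of \cite{Bociu-lasiecka-jde}, then yields \eqref{Lipschitz-property} and uniqueness. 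The main obstacle throughout is this double criticality of source and damping, and it is overcome solely by the quantitative gain in integrability furnished by the strong monotonicity of $g$ in \eqref{hyp_g'}.
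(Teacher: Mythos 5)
Your parts (II) and (III) are essentially the paper's own arguments: (II) is the Koch--Lasiecka time-regularization (the paper uses the finite-difference quotients $D_\epsilon u$ of \cite{Koch-lasiecka} as test functions, with the $L^6(Q)\times L^{6/5}(Q)$ duality closing the limit), and (III) is exactly the paper's modified-energy Gronwall: write $f(u^1)-f(u^2)=b\,z$ with $b=\int_0^1 f'(\xi_\theta)\,d\theta$, move the time derivative off $z_t$, control $\int_\Omega b_t|z|^2\,dx$ by an $L^1(0,T)$ weight built from $\|u^i_t\|_{L^6}^6$, and run Gronwall on the perturbed energy. (One routine repair: for that energy to be equivalent to $\|U^1-U^2\|_{\mathcal{H}}^2$ you must add the term $\tfrac{K_f}{2}\|z\|^2$, using the lower bound on $f'$ coming from \eqref{hyp-inf-f}; you do not mention this, and without it the $b$-term can make the modified energy negative.)

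The genuine gap is in (I), at the identification $\chi=g(u_t)$. The single-sequence Minty--Browder argument you invoke requires $\limsup_n\int_Q g(u_n')u_n'\,dx\,dt\le\int_Q\chi\,u_t\,dx\,dt$. Passing to the limit in the approximate energy identity, as you propose, only yields $\limsup_n\int_Q g(u_n')u_n'\le E(U(0))-E(U(T))$, by lower semicontinuity of $E$ at time $T$ along the weakly convergent sequence. To convert the right-hand side into $\int_Q\chi\,u_t$ you must test the limit equation $u_{tt}-\Delta u+\chi+f(u)=h$ with $u_t$, i.e.\ you need an energy identity (or at least the inequality in the correct direction) for the limit equation with the still-unidentified $\chi$. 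In the energy-critical case this cannot be justified by a formal computation, and nothing in your (I) supplies it; your (II) cannot be quoted either, since as written it concerns weak solutions of \eqref{P}, i.e.\ it presupposes $\chi=g(u_t)$. The gap is fixable in two ways: either first prove the energy identity for the $\chi$-equation by your own mollification argument (the $D_\epsilon u$ test only uses $\chi\in L^{6/5}(Q)$ and $u_t\in L^6(Q)$, not the formula for $\chi$), or invoke the energy inequality for the linear wave equation with $L^{6/5}(Q)$ forcing (Lions, Ch.~2, Lemma~6.1). The paper sidesteps the issue entirely: it applies monotonicity to \emph{differences of two Galerkin solutions}, proving $\limsup_{k,n}\int_Q\left[g(u_t^k)-g(u_t^n)\right]\left[u_t^k-u_t^n\right]dx\,dt\le 0$ by a Gronwall estimate of the same type as in (III), and then identifies the weak limit via \cite[Lemma 2.3]{Barbu}; this never requires testing the limit equation with $u_t$ before identification, which is precisely why that route was chosen.
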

The proof of the well-posedness theorem is based on the method introduced in \cite{Lorena-lasiecka}-which takes advantage of the  enhanced dissipation exhibited by  the additional integrability of the velocity. Existence of solutions  was obtained in \cite{Bociu-lasiecka-jde} by constructing suitable  approximations of critical nonlinearities followed by the use of monotone operator theory. Uniqueness was shown in \cite{Lorena-lasiecka}  by establishing first energy identity. However, continuous dependence stated in  \cite{Lorena-lasiecka} holds  {\it for smoother } initial data only. Since our ultimate goal is to have a properly defined dynamical system, we shall revisit the proof by using different method-based on Galerkin approximations. We will be able to establish full Hadamard well-posedness valid for all weak solutions in the {\it  energy critical} case. This is the main  step toward construction of  dynamical system $(\cH,S_t)$, whose existence is guaranteed by Theorem \ref{theo-global}. In addition, we shall show that all weak solutions can be obtained as strong limits of Galerkin solutions. This is in line with \cite{Kalantarov-Savostianov-Zelik} and allows to use Galerkin approximation  in the estimates  where there is not enough of smoothness to use operational calculus  and  classical limiting arguments.

Models with  nonlinear subquintic  terms  and enhanced dissipation were considered earlier in \cite{fereisl,vitillaro}. However, in the case of weak solutions {\it uniqueness} of solutions  and Hadamard well-posedness was considered an open problem. This issue has been positively resolved in \cite{Lorena-lasiecka} and in the present paper.

The main results of this work describe long-time behavior of solutions generated by the dynamical system
$(\mathcal{H},S_t)$. To maintain the text concise and avoid overloading it with preliminaries, the definitions and results related to the abstract theory of dynamical systems—particularly those concerning global attractors and their properties—will be referenced to the results established in established sources. For the sake of concreteness, we shall provide precise references to  \cite{chueshov-white,chueshov-yellow}.

\begin{theorem}\label{theo_main1}
	{\bf [Global attractor]} Assume that the hypotheses of Theorem \ref{theo-global} hold. Then, the associated dynamical system $(\mathcal{H},S_t)$ corresponding to the dynamics in \eqref{P} has a compact global attractor $\mathfrak{A}$ of the form $\mathfrak{A}=\mathrm{M}^u(\mathcal{N})$, where $\mathrm{M}^u(\mathcal{N})$ is the unstable manifold emanating from the set of stationary solution $\mathcal{N}$. Moreover, $\mathfrak{A}$ consists of full trajectories $\gamma=\{S_tU_0=U(t):t\in \mathbb{R}\}$ such that 
	\begin{align}\label{lim-trajec}\lim_{t\rightarrow -\infty}\mbox{dist}_{\mathcal{H}}(U(t),\mathcal{N})=0\quad\mbox{and}\quad \lim_{t\rightarrow +\infty}\mbox{dist}_{\mathcal{H}}(U(t),\mathcal{N})=0.\end{align}
\end{theorem}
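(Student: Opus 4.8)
The plan is to deduce the result from the abstract characterization of global attractors for \emph{gradient} dynamical systems (see \cite{chueshov-white,chueshov-yellow}): a gradient system whose Lyapunov function is bounded on bounded sets and has bounded sublevel sets, whose set of equilibria $\mathcal{N}$ is bounded, and which is \emph{asymptotically smooth}, possesses a compact global attractor coinciding with the unstable manifold $\mathrm{M}^u(\mathcal{N})$, and on which every full trajectory converges to $\mathcal{N}$ as $t\to\pm\infty$. Thus the work reduces to verifying these three hypotheses for $(\mathcal{H},S_t)$.

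First I would establish the gradient structure with $E$ from \eqref{functional-energy} as a strict Lyapunov function. The energy equality \eqref{energy-equality} together with monotonicity of $g$ (so that $g(s)s\ge0$) shows that $t\mapsto E(U(t))$ is non-increasing. If $E(U(t))=E(U(s))$ for some $t>s$, then $\int_s^t\int_\Omega g(u_\tau)u_\tau\,dx\,d\tau=0$; since \eqref{hyp_g'} gives $g'(s)\ge\kappa_0|s|^4\ge 0$ and $g(0)=0$, forcing $g(r)r>0$ for $r\neq0$, this yields $u_t\equiv0$ on $\Omega\times(s,t)$, i.e. $U$ is an equilibrium. Hence $E$ is strict. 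From \eqref{hyp_f2} and \eqref{def-omega} one obtains the coercivity bound $E(U)\ge\frac{\omega}{2}\|U\|_{\mathcal{H}}^2-C$, while the growth of $F$ gives an upper bound polynomial in $\|U\|_{\mathcal{H}}$; together these show $E$ is bounded on bounded sets and has bounded sublevel sets.

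Second, I would check boundedness of $\mathcal{N}$. Equilibria solve $-\Delta u+f(u)=h$; testing with $u$ and invoking \eqref{hyp-inf-f} (equivalently \eqref{hyp_f2}) absorbs the indefinite part of $f$ against $\|\nabla u\|^2$ via $\omega>0$, producing a uniform $H^1_0$ bound, so $\mathcal{N}$ is bounded. Granting the remaining hypothesis, the abstract theorem then delivers $\mathfrak{A}=\mathrm{M}^u(\mathcal{N})$, and the limits \eqref{lim-trajec} are automatic, since on a gradient system the $\alpha$- and $\omega$-limit sets of any bounded full trajectory are contained in $\mathcal{N}$.

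The crux, and the main obstacle, is \emph{asymptotic smoothness}. I would prove it through the compactness criterion of \cite{chueshov-white}, deriving a stabilizability estimate for the difference $W=U^1-U^2$ of two trajectories issuing from a bounded positively invariant set $B$. Multiplying the difference equation by $w_t$ (and, as an equipartition multiplier, by $w$), the aim is an inequality $\|W(T)\|_{\mathcal{H}}^2\le\eta_T\|W(0)\|_{\mathcal{H}}^2+\Psi_{T,B}(U^1,U^2)$ with $\eta_T<1$ for $T$ large and $\Psi_{T,B}$ a seminorm that is compact on $C(0,T;\mathcal{H})$. The monotone damping difference $(g(u_t^1)-g(u_t^2))(u_t^1-u_t^2)\ge0$ supplies the dissipation needed to absorb the velocity of $W$; the enhanced $L^6(Q)$-integrability of $u_t$ built into Definition \ref{weak-solution}, together with the energy identity for weak solutions of \cite{Koch-lasiecka}, are what make these multipliers rigorous despite $g(u_t)$ belonging only to $H^{-1}$. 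The genuinely delicate term is the critical source difference: because $f$ is quintic, $\int_0^T\int_\Omega(f(u^1)-f(u^2))w_t$ is not compact at top order. Here I would use $f\in C^2$ with \eqref{hyp_f'}, writing $f(u^1)-f(u^2)=\big(\int_0^1 f'(u^2+\theta w)\,d\theta\big)w$ and estimating by Hölder so that the top-order factor is controlled by the uniformly bounded energies of $U^1,U^2$ while the remaining factor carries a lower Sobolev norm of $W$ that embeds compactly; interpolation then confines the non-compact contribution to a term absorbable into $\eta_T$. Assembling these estimates yields asymptotic smoothness, completing the verification and hence the proof.
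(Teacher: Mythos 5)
Your overall reduction---gradient system, Lyapunov function bounded on bounded sets with bounded sublevel sets, bounded equilibria set $\mathcal{N}$, plus asymptotic smoothness, then invoke the abstract theorem giving $\mathfrak{A}=\mathrm{M}^u(\mathcal{N})$ and the limits \eqref{lim-trajec}---is exactly the paper's framework (its Propositions on the gradient structure, on $\Phi_R$, and on $\mathcal{N}$, combined with \cite[Corollary 7.5.7]{chueshov-yellow}), and your treatment of those three ingredients is sound. The gap is in the one step you yourself call the crux: asymptotic smoothness. Theorem \ref{theo_main1} assumes only the hypotheses of Theorem \ref{theo-global}, i.e.\ Assumption \ref{Assumption}, under which the damping may be \emph{degenerate} at the origin: \eqref{hyp_g'} permits $g'(0)=0$ (e.g.\ $g(s)=s^5$). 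Your stabilizability estimate needs the monotone damping difference to ``absorb the velocity of $W$'', but the mean value theorem only gives
\begin{equation*}
\bigl(g(u^1_t)-g(u^2_t)\bigr)(u^1_t-u^2_t)\;\ge\;\kappa_0\int_0^1\bigl|\theta u^1_t+(1-\theta)u^2_t\bigr|^4\,d\theta\,|w_t|^2,
\end{equation*}
whose weight vanishes where both velocities are small; there is no uniform control of $\|w_t\|^2$ by the dissipation integral, so the equipartition step cannot be closed and no contraction factor $\eta_T<1$ is produced. This is precisely why the paper proves its quasi-stability inequality (Proposition \ref{stabilizability-estimate}) only under the \emph{additional} hypothesis $g'(0)>0$, which upgrades the bound to $g'(s)\ge\kappa_2>0$ everywhere---and that result is used for Theorems \ref{theo-quasi} and \ref{theo_main2}, not for Theorem \ref{theo_main1}. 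A second, independent obstruction is your treatment of the source: for quintic $f$ the difference term cannot be split by interpolation into ``compact plus absorbable''; the paper handles $\int_\Omega (f(u^1)-f(u^2))w_t\,dx$ by the time-derivative rewriting \eqref{est2-3} plus Gronwall with the $L^1$-in-time weight $\|u^i_t\|_6^6$, which yields \emph{bounded} constants of the form $e^{C\int_0^t\psi}$---enough for the Lipschitz estimate \eqref{Lipschitz-property}, but never a factor smaller than one. The paper states this obstruction explicitly: in the critical case ``standard methods based on squeezing estimates are noneffective.''

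What the paper actually does for Theorem \ref{theo_main1} is different in kind: asymptotic compactness is obtained by an adaptation of Ball's energy method (Proposition \ref{Prop-asymp-compact}). One takes $\varphi_j(t_j)$ with $t_j\to\infty$, extracts weak limits of the shifted trajectories, writes the \emph{perturbed energy identity} \eqref{V} for $V(U)=E(U)+\epsilon\int_\Omega u_tu\,dx$ (legitimate because the energy \emph{equality} \eqref{energy-equality} holds for all weak solutions---this is where Theorem \ref{theo-global}(II) is essential), and then passes to the limit term by term using sign control and weak lower semicontinuity, letting $T\to\infty$ to conclude $\lim_j\|\varphi_j(t_j)\|_{\mathcal H}=\|\lambda\|_{\mathcal H}$, whence strong convergence. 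This argument uses only monotonicity of $g$ and the energy identity, with no non-degeneracy, which is exactly what lets Theorem \ref{theo_main1} cover the degenerate case. As written, your proof establishes the theorem only under the extra hypothesis $g'(0)>0$, i.e.\ a strictly weaker statement; to repair it you would need to replace the squeezing argument by an energy-method (Ball-type) proof of asymptotic compactness.
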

The proof of existence of global attractor is based on generalization  of \enquote{Ball's} method that employs energy arguments and is fundamentally based on the validity of {\it energy equality} valid for weak solutions. This method is adapted to the present context by  establishing first  the validity of {\it energy equality} and by incorporating critical nonlinear dissipation. The latter is possible due to the monotonicity properties.\\
Under the  additional assumption imposed on the damping $g'(0) > 0 $, the following properties of the attractor $\mathfrak{A}$ are valid.
\begin{theorem}\label{theo-quasi} {\bf [Quasi-stability property.]}
	Let the Assumption \eqref{hyp_g'} holds and $g'(0)>0$. Then, the dynamical system $(\mathcal{H},S_t)$ is quasi-stable.
\end{theorem}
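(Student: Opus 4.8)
The plan is to verify the quasi-stability inequality of Chueshov--Lasiecka \cite{chueshov-white} for the difference of two trajectories, the decisive new ingredient being the coercivity that the hypothesis $g'(0)>0$ injects into the damping. First I would record the structural consequence of $g'(0)>0$: since $g\in C^1$ and, by \eqref{hyp_g'}, $g'(s)\ge \kappa_0|s|^4$, continuity at the origin forces a \emph{uniform} lower bound $g'(s)\ge m_0>0$ for all $s\in\mathbb{R}$ (take $m_0=\min\{g'(0)/2,\kappa_0\varepsilon^4\}$, with $g'\ge g'(0)/2$ on $[-\varepsilon,\varepsilon]$ and $g'\ge\kappa_0\varepsilon^4$ outside). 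Hence $g'(s)\ge c(m_0+\kappa_0|s|^4)$, which yields the two monotonicity bounds
\begin{align*}
 \bigl(g(a)-g(b)\bigr)(a-b)\ \ge\ m_0|a-b|^2+c_0|a-b|^6,\qquad a,b\in\mathbb{R}.
\end{align*}
The linear term $m_0|a-b|^2$, which is \emph{absent} when $g'(0)=0$, is precisely what will drive exponential decay of the difference energy, while the sextic term provides a coercive sink for the critical velocities.

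Next, for two weak solutions $U^i=(u^i,u^i_t)$ lying in the bounded, forward-invariant absorbing set, I set $z=u^1-u^2$, $Z=(z,z_t)$, which solves $z_{tt}-\Delta z+[g(u^1_t)-g(u^2_t)]+[f(u^1)-f(u^2)]=0$. Using the energy equality of Theorem \ref{theo-global}\,(II) and the Galerkin realization guaranteed there, the difference admits a rigorous energy identity, so I may legitimately test with $z_t$ and with $z$. I then introduce the perturbed functional $V=E_z+\epsilon\int_\Omega z_t z\,dx$, which for small $\epsilon$ is equivalent to the difference energy $E_z=\tfrac12\bigl(\|\nabla z\|^2+\|z_t\|^2\bigr)$. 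Testing with $z_t$ and invoking the bound above absorbs $m_0\|z_t\|^2$ together with the sink $c_0\|z_t\|_{L^6}^6$; testing with $z$ supplies $-\epsilon\|\nabla z\|^2+\epsilon\|z_t\|^2$. Choosing $\epsilon$ small yields a differential inequality $\tfrac{d}{dt}V+\eta V\le \mathcal{R}$, where $\mathcal{R}$ collects the critical source and damping remainders.

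The heart of the matter—and the step I expect to be the main obstacle—is estimating $\mathcal{R}$, in particular the critical source pairing $\int_\Omega(f(u^1)-f(u^2))z_t\,dx$. Writing $f(u^1)-f(u^2)=Gz$ with $G=\int_0^1 f'(su^1+(1-s)u^2)\,ds$ and $|G|\le C(1+|u^1|^4+|u^2|^4)\in L^{3/2}$ by \eqref{hyp_f'}, Hölder forces the conjugate velocity at the \emph{critical} exponent $L^6$; here the enhanced dissipation $u^i_t,z_t\in L^6(Q)$ is indispensable, and the resulting $\|z_t\|_{L^6}^6$ is absorbed by the sextic sink. The genuinely critical part is the linear-in-$z$ contribution, which resists any crude bound $\|z\|_{L^6}\le C\|\nabla z\|$ because $H^1\hookrightarrow L^6$ is \emph{non-compact}. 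The device to overcome this is to work with the time-integrated inequality and exploit $zz_t=\tfrac12\partial_t(z^2)$, transferring the time derivative onto $G$: the boundary terms are controlled by $E_z(T),E_z(0)$, while the interior term takes the form $\int\!\!\int f''(\cdot)\,(u^1_t\ \text{or}\ u^2_t)\,z^2$, where the $C^2$-bound \eqref{hyp_f''} and the $L^6$ time-integrability of the velocities leave a purely \emph{quadratic} factor in $z$, weighted by $\|u_t\|_{L^6}\in L^1(0,T)$. It is this quadratic $z$-dependence—unavailable in the original cubic-type pairing—that permits extraction of a genuine compact seminorm $\mu(z)$ through the compact embedding $H^1\hookrightarrow\hookrightarrow L^{p}$, $p<6$, together with a Gronwall step against the integrable weight; the damping remainder $\epsilon\int(g(u^1_t)-g(u^2_t))z$ is handled identically. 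The delicate bookkeeping is to keep the small constants so that every energy-level quantity is absorbed into $-\eta V$.

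Finally, integrating $\tfrac{d}{dt}V+\eta V\le\mathcal{R}$ via Gronwall gives
\begin{align*}
 \|Z(t)\|_{\mathcal{H}}^2\ \le\ C e^{-\eta t}\|Z(0)\|_{\mathcal{H}}^2+C\sup_{s\in[0,t]}\mu\bigl(z(s)\bigr)^2,
\end{align*}
so that the factor $b(t)=Ce^{-\eta t}\to 0$. Combined with the Lipschitz estimate \eqref{Lipschitz-property} of Theorem \ref{theo-global}\,(III), which furnishes $\|Z(t)\|_{\mathcal{H}}^2\le a(t)\|Z(0)\|_{\mathcal{H}}^2$ with $a(t)=e^{2Ct}$, the pair of inequalities is exactly the definition of quasi-stability in \cite{chueshov-white}. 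The conceptual point of the whole argument is that $g'(0)>0$ upgrades the merely sextic-coercive, asymptotically smooth dynamics of Theorem \ref{theo_main1} to one that is strictly contractive modulo a compact seminorm.
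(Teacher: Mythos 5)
Your overall strategy coincides with the paper's own proof (Proposition \ref{stabilizability-estimate}): the uniform bound $g'\ge \kappa_2>0$ extracted from $g'(0)>0$ together with \eqref{hyp_g'}; the rewriting of the critical pairing $\int_\Omega (f(u^1)-f(u^2))z_t\,dx$ by transferring the time derivative onto the mean-value factor, so that only an $f''(\cdot)(\cdot)_t|z|^2$ term survives, quadratic in $z$ and weighted by the $L^6$-in-time velocities; the lower-order compact seminorm $\sup_s\|z(s)\|$; a Gronwall step against a weight that the enhanced dissipation makes integrable; and the combination with \eqref{Lipschitz-property} for the first quasi-stability condition. The only structural difference is bookkeeping: you run a perturbed functional $V=E_z+\epsilon\int_\Omega z_tz\,dx$ and a differential inequality, whereas the paper integrates the identities over $[0,T]$, iterates over intervals $[mT,(m+1)T]$ to get a discrete contraction, and then applies Gronwall; these two packagings are interchangeable.

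There is, however, one step in your sketch that fails as written: the damping cross term $\epsilon\int_\Omega\bigl(g(u^1_t)-g(u^2_t)\bigr)z\,dx$ produced by the multiplier $z$ (the paper's $J_4$ in \eqref{j4}). It cannot be \enquote{handled identically} to the source term: the time-derivative transfer would require differentiating $\int_0^1 g'(\theta u^1_t+(1-\theta)u^2_t)\,d\theta$ in $t$, i.e. $g''$ and $u_{tt}$, neither of which is available ($g$ is only $C^1$, and weak solutions have no pointwise control of $u_{tt}$). If instead you estimate it by H\"older, its quartic part is bounded by $\epsilon\,(\|u^1_t\|_6^4+\|u^2_t\|_6^4)\,\|z_t\|_6\|z\|_6$, and here your weakened monotonicity bound $\bigl(g(a)-g(b)\bigr)(a-b)\ge m_0|a-b|^2+c_0|a-b|^6$ is not strong enough: $\|z_t\|_6$ enters this product only to the first power, so any Young splitting against the sextic sink $c_0\|z_t\|_6^6$ necessarily leaves a remainder of homogeneity zero in the difference, of the form $C\bigl(\|u^1_t\|_6^6+\|u^2_t\|_6^6\bigr)$ (or worse, $\|u^i_t\|_6^{12}$, which is not even time-integrable). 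Such an additive term is not multiplied by any quantity vanishing with $U^1_0-U^2_0$ and destroys the quasi-stability form of the final inequality. The cure is already contained in your own intermediate inequality $g'(s)\ge c\,(m_0+\kappa_0|s|^4)$: averaging it along the segment joining $u^2_t$ to $u^1_t$ yields the paper's stronger bound \eqref{hyp-g'-22},
\begin{equation*}
\bigl(g(r)-g(s)\bigr)(r-s)\ \ge\ \kappa_2|r-s|^2+\tfrac{\kappa_0}{10}\bigl(|r|^4+|s|^4\bigr)|r-s|^2 ,
\end{equation*}
which keeps the quartic weights attached to $|z_t|^2$. One then splits the cross term by Cauchy--Schwarz with weight $|u^1_t|^4+|u^2_t|^4$: the half carrying $|z_t|^2$ is absorbed \emph{pointwise in time} by the damping quantity $D(z_t)$, while the half carrying $|z|^2$ is bounded by $\bigl(\|u^1_t\|_6^4+\|u^2_t\|_6^4\bigr)\|z\|_6^2\le \delta E_z+C_\delta\, d(t)E_z$ with $d(t)=\|u^1_t\|_6^6+\|u^2_t\|_6^6\in L^1(\mathbb{R}^+)$ uniformly in the data — exactly the paper's estimates \eqref{j4}--\eqref{j4'}. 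With this correction your differential-inequality argument closes and delivers $b(t)\to 0$, $b\in L^1(\mathbb{R}^+)$ and $c(t)$ bounded, as required by the quasi-stability definition.
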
 The proof that the dynamical system is quasi-stable follows from the fact that the dynamical system $(\mathcal{H},S_t)$  satisfies the so called  quasi-stability inequality, which is an enhanced version of squeezing property -see \cite[Definition 7.9.2]{chueshov-yellow}.
\begin{theorem}\label{theo_main2}{\bf[Properties of the attractor]}
	Under the Assumptions of Theorem \ref{theo-quasi} the following properties of the attractor  $\mathfrak{A}$  are valid.
	
	\begin{itemize}
		\item[{\bf(I)}]  {\bf Finite-dimensionality:}
		the global attractor $\mathfrak{A}$ has finite fractal dimension  $dim^\mathcal{H}_f \mathfrak{A}$;
		\item[{\bf(II)}] \label{regularity_a}
		{\bf Regularity:} any full trajectory $\gamma=\{U(t):t\in \mathbb{R}\}$ from attractor $\mathfrak{A}$ enjoys the following regularity properties
		$$U=(u,u_t)\in L^{\infty}(\mathbb{R};H^2\times H^1),\quad u_{tt}\in L^{\infty}(\mathbb{R};H^0).$$
		Moreover, there exists $R_*>0$ such that:
		%begin{align*}
		$\|u_{tt}(t)\|^2+\|u_t(t)\|^2_{H^1}+\|u(t)\|^2_{H^2}
		\le R^2_*,$
		%\end{align*}
		for all $t\in \mathbb{R}$ and some positive constant $R_*$.
		\item[\bf(III)] {\bf Exponential attractor:} the dynamical system $(\mathcal{H},S_t)$ possesses a generalized fractal exponential attractor $\mathfrak{A}_{\exp}$ with finite dimension in the extended space $\mathcal{H}_{-s}:=H^{-s+1}\times H^{-s}$ with $0<s\le 1$.
	\end{itemize}
\end{theorem}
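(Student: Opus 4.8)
The whole theorem is designed to be harvested from the quasi-stability established in Theorem~\ref{theo-quasi} via the abstract theory of quasi-stable systems in \cite{chueshov-white,chueshov-yellow}; the plan is to match each of the three assertions with its abstract criterion and to supply, in each case, only the verification that is made nontrivial by the double criticality of $f$ and $g$. All the work takes place on the compact global attractor $\mathfrak{A}$ produced by Theorem~\ref{theo_main1}, on which every energy-level quantity is uniformly bounded.

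Part (I) is the most direct. The quasi-stability inequality for $(\mathcal{H},S_t)$ on $\mathfrak{A}$ is exactly the hypothesis of the abstract finite-dimensionality criterion for quasi-stable systems in \cite{chueshov-yellow}: it expresses the difference of two trajectories as a uniform contraction plus a compact seminorm, which is what the dimension estimate consumes. Since $\mathfrak{A}$ is compact and invariant, I would simply invoke that criterion to conclude $\dim_f^{\mathcal{H}}\mathfrak{A}<\infty$; no criticality enters here, because the argument never touches the nonlinearities directly.

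For Part (II) I would first convert quasi-stability into time regularity. Applying the quasi-stability inequality to the pair $(U(t+\tau),U(t))$, dividing by $\tau$ and sending $\tau\to0$ along a trajectory of $\mathfrak{A}$, yields a uniform bound on $\partial_t U=(u_t,u_{tt})$ in $\mathcal{H}$, i.e. $u_t\in L^\infty(\mathbb{R};H^1)$ and $u_{tt}\in L^\infty(\mathbb{R};H^0)$. The spatial regularity is then read off the elliptic identity $-\Delta u=h-u_{tt}-g(u_t)-f(u)$. This is where \emph{the main obstacle} sits: with only $u,u_t\in H^1\hookrightarrow L^6$ the critical terms $f(u)$ and $g(u_t)$ are controlled a priori only in $L^{6/5}$, so elliptic regularity returns no better than $u\in W^{2,6/5}$. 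The decisive step is therefore a bootstrap that places the right-hand side in $H^0$; here I would differentiate the equation in time to study $w=u_t$, and exploit the non-degeneracy $g'(0)>0$ — which upgrades \eqref{hyp_g'} to $g'(s)\ge c(1+|s|^4)$ and renders the damping effective — together with the time regularity already secured, in order to gain the extra integrability of $u_t$ beyond $L^6$ that forces $g(u_t),f(u)\in H^0$. Once $-\Delta u\in H^0$, elliptic regularity gives $u\in L^\infty(\mathbb{R};H^2)$, and the uniform radius $R_*$ follows by collecting the uniform bounds on $u_{tt}$, $\nabla u_t$ and $\Delta u$.

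For Part (III) the same quasi-stability again provides the core ingredient of the abstract exponential-attractor theorem for quasi-stable systems in \cite{chueshov-yellow}; what is missing is Hölder continuity in time of the flow, and the criticality dictates the topology in which it holds. On a bounded absorbing set one has $u\in H^1$, $u_t\in H^0$, and, rewriting $u_{tt}=h+\Delta u-f(u)-g(u_t)$ with $\Delta u\in H^{-1}$ and $f(u),g(u_t)\in L^{6/5}\hookrightarrow H^{-1}$, also $u_{tt}\in L^\infty(\mathbb{R};H^{-1})$. Consequently $t\mapsto U(t)$ is Lipschitz into $\mathcal{H}_{-1}=H^0\times H^{-1}$ and, by interpolation with the uniform $\mathcal{H}$-bound, Hölder into $\mathcal{H}_{-s}$ for each $0<s\le1$. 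This is precisely the reason one cannot expect the exponential attractor in the energy norm itself: the critical damping places $g(u_t)$ in $H^{-1}$ and no smoother. Feeding quasi-stability together with this Hölder continuity into the abstract construction produces a generalized fractal exponential attractor $\mathfrak{A}_{\exp}$ of finite dimension in $\mathcal{H}_{-s}$, which completes the proof.
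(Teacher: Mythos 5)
Parts (I) and (III) of your proposal follow the paper's route (quasi-stability fed into the abstract theorems of \cite{chueshov-yellow}, plus H\"older continuity in the weaker scale $\mathcal{H}_{-s}$), and your first half of Part (II) — difference quotients in the quasi-stability inequality to bound $(u_t,u_{tt})$ in $\mathcal{H}$ — is exactly the mechanism behind \cite[Theorem 7.9.8]{chueshov-yellow}, which the paper simply cites. The genuine gap is in the second half of Part (II), the $H^2$ bound for $u$. You correctly identify that the elliptic identity $-\Delta u = h-u_{tt}-g(u_t)-f(u)$ only places the right-hand side in $L^{6/5}$, but your proposed repair — differentiate in time and use $g'(0)>0$ to ``gain extra integrability of $u_t$ beyond $L^6$'' — does not close the argument. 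First, $f(u)\in H^0$ requires $u\in L^{10}(\Omega)$, and no amount of integrability of $u_t$ yields spatial integrability of $u$ beyond $L^6$ without already controlling $u$ in a space above $H^1$; this is precisely the criticality obstruction, and the elliptic bootstrap is stuck because $W^{2,6/5}(\Omega)\hookrightarrow L^6(\Omega)$ gives back exactly what you started with. Second, the time-differentiated equation produces the term $g'(u_t)u_{tt}$, which with $g'(u_t)\in L^{3/2}$ and $u_{tt}\in L^2$ lands below $L^{6/5}$, so no gain is available there either. The paper circumvents this by a completely different device (Chueshov--Lasiecka splitting): it first proves stationary solutions lie in $H^2$, then decomposes the trajectory as $u=w+z$ where $w$ solves the \emph{same} equation started at a stationary point $(u^*,0)\in\mathcal{N}\subset\mathfrak{A}$; a multiplier estimate with $-\Delta w_t-\alpha\Delta w$ (using the uniform bounds $R,R_1$, valid for $W$ by invariance of the attractor) keeps $W$ bounded in $H^2\times H^1$, while the gradient structure forces $Z=U-W\to 0$ backward in time, so $u(t)\in H^2$ for $t\le T_0$; a forward propagation estimate then extends the bound to all $t\in\mathbb{R}$. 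Your outline contains no substitute for this use of the gradient structure and of the smoothness of $\mathcal{N}$, and without it the claimed bootstrap fails.

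A second, repairable, error occurs in Part (III): you assert $g(u_t(t))\in L^{6/5}(\Omega)\hookrightarrow H^{-1}$ pointwise in time on the absorbing set, hence $u_{tt}\in L^{\infty}(\mathbb{R};H^{-1})$ and Lipschitz continuity of $t\mapsto S_t z$ into $\mathcal{H}_{-1}$. On the absorbing set one only has $u_t(t)\in L^2(\Omega)$ pointwise in time, so $g(u_t(t))$ is merely in $L^{2/5}(\Omega)$, which does not embed in $H^{-1}$; the correct statement uses the enhanced dissipation $u_t\in L^6(Q)$ to get $U_t\in L^{6/5}(0,T;\mathcal{H}_{-1})$, whence H\"older continuity with exponent $1/6$ (and $s/6$ after interpolation), as in the paper. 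The conclusion of Part (III) survives this correction, since the abstract exponential-attractor theorem only needs H\"older, not Lipschitz, continuity in time.
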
	
The proof of Theorem \ref{theo_main2} is based on the validity of the quasi-stability property of the system $(\mathcal{H},S_t)$. This property is related to an exponential rate of approximation of sets that attract each other, therefore, the assumption that $g'(0)>0$ is expected.

The result presented above are inspired by earlier papers \cite{Lorena-lasiecka,Bociu-lasiecka-jde} where the model with critical [quintic] sources and enhanced dissipations were considered. For these models an existence and uniqueness of weak solutions has been established in references cited  above. Hadamard well-posedness has been  shown  in \cite{Lorena-lasiecka} for {\it subcritical} sources only.
Questions related to blow up of weak solutions  have been investigated in \cite{ram1,bociu1,ram3,ram2}.
Full Hadamard well-posedness and complete analysis of long time dynamics [including finite-dimensionality of the attractor]  is the main contribution of the present manuscript.This includes the final assertion that  {\it quintic waves can be stabilized  [uniformly/exponetially] to  a finite-dimensional set  and characterised by finitely many degrees of freedom.} This opens the door to a  construction of determining functionals \cite{chueshov-white,ladyzenskaya} and data assimilation methods  which provide a basis for computational analysis.

We conclude this section by  briefly outlining general approach taken in the paper. The existence of weak solutions is established by Galerkin method with suitable $L^p$ global estimates. Weak solutions are shown to satisfy {\it energy equality}. Here, again, enhanced dissipation plays a major role. This is a critical step in order to prove uniqueness of weak solutions and Hadamard well-posedness. The latter allows to construct a well-posed dynamical system. By resorting, again, to enhanced dissipation, the system $(\mathcal{H},S_t)$ is shown to be ultimately dissipative, hence admitting weak global attractor. The next critical step is asymptotic compactness. Relying on already proved energy identity,  an adaptation of the method of \cite{ball}  to account for nonlinear damping, allows to establish asymptotic compactness, hence an existence of global attractor.
Under the additional assumption of non-degeneracy of the damping at the origin, global attractor is shown to be: smooth and also  finite-dimensional. In addition, the dynamical system has also exponential attractor. These last properties  follow from the  proven q{\it uasi-stability inequality}.
\subsection{Past contributions-history of the problem.}
%\begin{remark}
Wave dynamics in 3 dimensions with subquintic and quintic source has been an object of long-time and intense activities. The main issues of interest to this work are well-posedness and long time behavior. The presence of the quintic source makes  the traditional methodology un-applicable-due to the lack of appropriate embeddings which would \enquote{close} the needed  estimates.  Relatively recent development of Strichartz estimates opened the door to novel methodologies, where the  additional space integrability is achieved due to compensation with time integrability. Careful balance of time-space integrability allows to construct  local solutions  referred to as Shatah-Struwe [S-S] solutions. However, such solutions  do not admit global extensions [unless the case is subcritical]. Here, the rescue comes by appealing to non-concentration argument \cite{Burq-Lebeau}. This argument  relies on finite speed of propagation, hence sensitive to nonlinear damping. And this is one of the main points where  nonlinearity of the damping in the present model becomes predicament. In fact, in the case of {\it linearly}  bounded dissipation, the methodology pursued in \cite{Kalantarov-Savostianov-Zelik}  is  effective. The starting point of the analysis in \cite{Kalantarov-Savostianov-Zelik} is local existence of S-S-solutions. These are obtained by using Strichartz estimates applied to Galerkin solutions. Since Strichartz estimates do not admit global extensions [unless the case is subcritical], global existence of Galerkin limits is obtained by appealing to non-concentration argument \cite{Burq-Lebeau}. Having established global existence [S-estimates do not blow up in time]-the next step is a construction of a dynamical system which contains weak Galerkin limits [not necessarily S-solutions which may be lost in the limiting process]. The latter requires uniqueness and continuous dependence of the said solutions. In order to achieve this, the method of backward trajectories is employed in order to generate smoothness of backward trajectories.
Classical  forward regularity  argument allows to propagate the smoothness to any time $T$. This allows to claim uniqueness of orbits and also [due to the established backward smoothness]  {\it energy equality} on any time interval. Equipped with the above properties, energy method  is used in \cite{Kalantarov-Savostianov-Zelik} in  order to  assert an asymptotic compactness and to conclude the proof of existence of global attractor. The orbits in the attractor become eventually S-S solutions.

In the case of subquintic sources, the analysis is simpler [S-S solutions are global], though still technical. Some of the recent relevant references include \cite{liu,Yan} and references within, where nonlinear dissipation has been also  treated. For the quintic waves with {\it nonlinear  dissipation} some of the elements of the S-S method described above has been very recently applied to  subquintic and quintic waves with nonlinear {\it enhanced} dissipation \cite{zhou}. In particular, backward trajectories method  patterned after \cite{zelik} is the driving argument in establishing asymptotic compactness of the trajectories and their smoothness. This requires strong assumptions imposed on the damping [nondegenearcy  of the slope at the origin] and also  the source [coercivity of quintic estimate].   

One of the  goals  of this paper is to show that such restrictions, in general, are not needed. Moreover, in the nondegenerate case , quintic waves can be stabilised to finite dimensional sets.   The model with enhanced dissipation takes its own life with no reference to either S-S solutions or backward trajectories. The main protagonist is the enhanced dissipation  which could  also be degenerate [$g'(0) =0$.]  This alone  provides asymptotic compactness effect for a large range of sources including the  quintic-energy critical. No coercivity assumption is needed.  The role of Strichartz estimates is taken, in some sense,  by  the enhanced dissipation generated by  the nonlinearity of the  damping.  However, in the present case one is in a position to show that  the enhanced dissipation is retained  in the ultimate dissipativity estimate  [unlike Strichartz estimate].  In view of this, the present work is a continuation of  the model and methods  considered \cite{Lorena-lasiecka} in the direction of dynamical systems and its long time behaviour.
The main player in this game is {\it energy equality} established   for weak solutions - an earlier version of the equality can be found in \cite{Lorena-lasiecka}. Adaptation of J.Ball's method to nonlinear dissipation leads to the existence of global attractor.  A general quasi-stability property for the dynamics is established under additional assumption of non-vanishing derivative of the damper. The latter implies {\it  finite-dimensionality, enhanced regularity of the attractor and also an existence of exponential attractor}.
In order to recover maximal regularity of the attractor,  the splitting method used by Chueshov and Lasiecka in \cite{chueshov-lasiecka-2007} in conjunction with gradient structure of the attractor and properties of the stationary set.
In conclusion, the results provided eliminate critical restrictions in \cite{zhou} and moreover lead to different methodology applicable to many other hyperbolic like dynamics with critical sources. The method  is much simpler and it could also be applied to nonlocal damping under certain natural  assumptions \cite{zhou,zhou1}.

One should also mention works where {\it strong damping}  is added to the model. But this is a form of regularization where originally hyperbolic dynamics becomes parabolic [linearisation generates  an  analytic semigroup].  The latter is a very different problem. References \cite{C-C,C-T,G-M,K-Z}.\\
Quintic waves with {\it  linear}  localised dissipation defined for {\it  stable}  dynamics has been considered in \cite{cavalcanti}, where exponential decays of the energy are claimed.\\
Regular solutions for  wave equation with super- critical [above quintic] exponents were considered in \cite{radu}. 

The remainder of this article is devoted to the proofs of the three theorems stated above. The definitions and results from the abstract theory of global attractors that are used herein are referred to the books \cite{chueshov-white,chueshov-yellow}.
%While existence of solutions can be obtained by standard Galerkin method,  the uniqueness and  Hadamard wellposedness requires these additional  arguments.
%%In what follows now, we shall focus on properties of solutions from the point of view of dynamical system.

% standard and involves the application of the Galerkin method together with the Aubin-Lions compactness arguments.
%Thus, in order to focus on the main results concerning the dynamics of the problem, specifically the existence of a compact global attractor and its properties, we include the well-posedness result of the problem in the Appendix.

%\section{Proof of Theorem \ref{theo-global}: Weak Solutions}
\section{Proof of Theorem \ref{theo-global}: Weak Solutions and the Generation of the Dynamical System $(\mathcal{H},S_t)$.}
The well-posedness of the energy-critical wave equation with enhanced nonlinear dissipation has been investigated in \cite{Bociu-lasiecka-jde}. Methods of monotone operator theory  \cite{Barbu}  were applied in order to construct semigroup  solutions.  However,  several aspects  of the theory presented in \cite{Bociu-lasiecka-jde}-such as robustness and continuous dependence on the data,  required additional hypotheses  imposed on the data. These limitations are eliminated  [necessary for construction of dynamical system] in the present approach  which is based on construction of Galerkin solutions  with the  appropriate estimates exploring the enhanced dissipation. The estimates obtained  are taken advantage of  in the process of rigorous justification of several calculations throughout the paper.. 

{\it Here is the plan of attack}. Weak solutions will be constructed via  taking the limits of Galerkin approximations. The next step is establishing  {\it energy identity} valid for each weak solution.  This is done via special discretization method introduced by H. Koch \cite{Koch-lasiecka}. The latter allows to establish uniqueness of weak solutions.  This step  also allows to conclude that each weak solution can be obtained via Galerkin limit. Hadamard well-posedness is obtained via the estimates performed on Galerkin approximations  and their limits.

\subsection{Proof of Theorem \ref{theo-global}-{\bf(I)}: Construction of Weak Solutions}
The proof of existence of weak solutions proceeds along general strategy  used in \cite{Lorena-lasiecka}, where the enhanced dissipation plays critical role in  the arguments.
However, the construction in \cite{Lorena-lasiecka} is based on suitable truncations of nonlinear terms and heavy use of monotone operator theory \cite{Barbu,lasiecka-tataru}. In the present case, we resort to nonlinear Galerkin solutions, which also give a nice computational framework.  The advantage of Galerkin solutions is that \enquote{formal} computations are well justified  and lead to conserved quantities. However, critical limit passages on nonlinear terms  rely on  some arguments seen already in \cite{Lorena-lasiecka}. On the other hand, \cite{Lorena-lasiecka} deals also with the  boundary sources of critical range.These will be more difficult to incorporate within the Galerkin scheme.

\subsubsection*{Galerkin's approximate solutions}
Let $0<\lambda_1\le \lambda_2\le \cdots$ be the eigenvalues of the Laplacian operator $-\Delta$ with Dirichlet boundary condition and $\omega_1,\omega_2,\cdots$ be the corresponding eigenfunctions such that they form an orthonormal basis in $H^0$. We assume the boundary $\Gamma=\partial \Omega$ regular enough such that $\omega_j \in H^2.$ Let $V_k=\mbox{Span}\{\omega_1,\cdots,\omega_k\}$
the subspace of $H^2$ generated by the first $k$ elements of $\{\omega_j\}_{j\in \mathbb{N}}$.
For $k\in \mathbb{N}$ we can construct a function $u^k$ given by
$
u^k(t)=\sum_{j=1}^k y_{jk}(t)\omega_j\in V_k,$ $  t\in[0,t_k),$
where $(y_{jk})$ is a local solution on $[0, t_k)\in[0, T)$ of the following system of ODEs:
\begin{eqnarray}\left\{\begin{array}{l}\label{approximate}\displaystyle{
			(u^k_{tt}(t),\omega_j) + (\nabla u^k(t),\nabla\omega_j)+(g(u^k_t(t)),\omega_j)+(f(u^k(t)),\omega_j)=(h,\omega_j)}\\
		\displaystyle{U^k(0)= (u^k(0),u^k_t(0))=(u_{0k},u_{1k})=U_{0k},\quad j=1,\cdots,k.}\end{array}\right.
\end{eqnarray}
In what follows, it is necessary to establish a priori estimates that allow the extension of the local solution to the entire interval $[0, T]$. With this estimate in hand, we can then take the limit in the approximate problem \eqref{approximate} to establish the existence of weak solutions to \eqref{P}.
\subsubsection*{A priori estimates}
We consider the approximate system (\ref{approximate}) with
\begin{eqnarray}\label{initial-condition}
	U^k(0)\longrightarrow U_0\quad \mbox{strongly in}\quad \mathcal{H}.
\end{eqnarray}
%Taking $\omega_j=u^k_t(t)$ in the approximate equation (\ref{approximate}) and integrating over $[0,t]$ yields
Taking $\omega_j = u_t^k(t)$ in the approximate equation \eqref{approximate} and integrating over the interval $[0, t]$, we obtain that $u^k(t)$ satisfies the energy identity \eqref{energy-equality}. Then, by the coercivity conditions given in \eqref{hyp_g'} and \eqref{hyp_f2}, growth conditions \eqref{hyp_g'} and \eqref{hyp_f'}, embeddings $H^1\hookrightarrow L^{6}(\Omega)\hookrightarrow H^0$, and convergence \eqref{initial-condition}, we conclude that
\begin{align}\label{Estimate_I0}
||U^k(t)||^2_{\mathcal{H}}+\int_0^t\|u^k_t(s)\|^{6}_{6}ds\le C,\quad \forall t\in[0,t_k].
\end{align}
where $C=C_{(||U_0||_{\mathcal{H}})}$. Therefore, estimate \eqref{Estimate_I0} allows us to extend the local solution of the
approximate problem to whole interval $[0,T]$, for any given $T > 0$.
From \eqref{Estimate_I0}, we have
\begin{eqnarray}
\label{bounded-1}U^k=(u^k,u^k_t)\;\; \mbox{is bounded in}\;\; L^{\infty}(0,T;\mathcal{H});\;\;
(u^k_t) \;\; \mbox{is bounded in} \;\; L^{6}(Q).\label{bounded-2}
\end{eqnarray}

\subsubsection*{Limiting process.}
Now, we proceed to take the limit in the approximate problem \eqref{approximate}, thereby demonstrating that a limiting solution satisfies the variational problem \eqref{variational-problem}.
\paragraph{\bf{Analysis of linear terms}.}
From \eqref{bounded-1}, we  extract a subsequence still denoted by itself such that
\begin{align}U^k\rightharpoonup U=(u,v)\quad \mbox{weakly* in}\quad L^{\infty}(0,T;\mathcal{H}).\label{conv-weak-star-U}
\end{align}
It is easy to see that $v=u_t$ and convergence \eqref{conv-weak-star-U} allows passing the limit in the linear terms of the approximated Eq. \eqref{approximate}. %i.e.,
%\begin{align*}\int_0^T(u^k_t,\psi_t)dt&\longrightarrow  \int_0^T(u_t,\psi_t)dt\ \ \hbox{as}\ \ k\rightarrow +\infty,\quad \forall \psi_t\in H^0,\\
%\int_0^T(\nabla u^k,\nabla \psi)dt&\longrightarrow\int_0^T(\nabla u,\nabla \psi)dt\ \ \hbox{as}\ \ k\rightarrow +\infty,\quad \forall \psi\in H^1.
%\end{align*}
\paragraph{\bf{Analysis of the source term $f(u^k)$}.}
From \eqref{bounded-1}, Applying Lions' compactness theorem \cite[Ch. 1, Th. 5.1 ]{Lions} with $H^1\hookrightarrow\hookrightarrow H^0\hookrightarrow H^0$,  it turns out that there exists a subsequence of $(u^k)$ still denoted by $(u^k)$ such that
$u^k\rightarrow u$ strongly in $H^0.$
Passing to a subsequence if necessary, and by continuity of $f$ it follows:
\begin{align}\label{conv-u-a.e.}
u^k\rightarrow u\quad \mbox{a.e. in}\quad Q;~~
%Using the continuity of $f$, it follows from \eqref{conv-u-a.e.} that
%\begin{align}\label{conv-f-a.e.}
f(u^k)\rightarrow f(u)\quad \mbox{a.e. in}\quad Q.
\end{align}
Now, from Assumption \eqref{hyp_f'}, embeddings $H^1\hookrightarrow L^{6}(\Omega)\hookrightarrow L^{6/5}(\Omega)$, and \eqref{bounded-1}, we have
\begin{align*}
\int_0^T\int_{\Omega}|f(u^k)|^{6/5}dxdt\le C\int_0^T\int_{\Omega}[\,|u^k|^{6/5}+|u^k|^{6}\,]dxdt<\infty.
\end{align*}
That is,
\begin{align}\label{bound-f}
f(u^k)\quad\mbox{in bounded in}\quad L^{6/5}(Q).
\end{align}
Then, using \eqref{conv-u-a.e.} and \eqref{bound-f}, it follows from Lions' lemma \cite[Ch. 1, Lemma 1.3]{Lions} that
\begin{align}\label{XXX}
f(u^k)\rightarrow f(u)\quad \mbox{weakly in}\quad L^{6/5}(Q)=[L^6(Q)]'.
\end{align}
Thus, from \eqref{XXX}, we have
$$\int_0^T(f(u^k),\psi) dt\longrightarrow \int_0^T(f(u),\psi) dt,\quad \mbox{as}\quad k\rightarrow +\infty,\quad \forall \psi \in L^6(Q).$$
\paragraph{\bf{Analysis of the damping term $g(u^k_t)$}.}
From Assumption \eqref{hyp_g'}, embeddings $L^{6}(\Omega)\hookrightarrow H^0\hookrightarrow L^{6/5}(\Omega)$, and \eqref{bounded-2}, we have
\begin{align*}
\int_0^T\int_{\Omega}|g(u^k_t)|^{6/5}dxds\le C\int_0^T\int_{\Omega}[\,|u^k_t|^{6/5}+|u^k_t|^{6}]dxds<\infty.
\end{align*}
That is,
\begin{align*}
g(u^k_t)\quad\mbox{is bounded in}\quad L^{6/5}(Q).
\end{align*}
Consequently, we get
\begin{align*}
g(u^k_t)\rightarrow \chi\quad \mbox{weakly in}\quad L^{6/5}(Q),\quad \mbox{for some}\quad \chi\in L^{6/5}(Q).
\end{align*}
From Eq. \eqref{approximate}, we also have $u^k_{tt}\rightarrow u_{tt}$ weakly$*$ in $L^{\infty}(0,T;H^{-1})$.
Therefore, letting $k\rightarrow +\infty$ in Eq. \eqref{approximate} and making use of above convergence, we conclude that
\begin{align*}
u_{tt}-\Delta u+\chi+f(u)=h\quad \mbox{in}\quad H^{-1}\quad\mbox{for a.e.}\;\; t\in [0,T].
\end{align*}
It remains to show that $g(u_t)=\chi$.
To prove the latter  we shall use {\it maximal  monotonicity} of the operator $g(u_t)$ along with \cite[Lemma 2.3]{Barbu}. From this result, it suffices to show that:
\begin{equation}\label{1g}
\lim\sup_{k,n\rightarrow +\infty}  \int_0^T\int_{\Omega}\left[\,g(u_t^k)-g(u_t^n)\,\right]\left[\,u_t^k-u_t^n\,\right]dx dt \leq 0,
\end{equation}
where the above inequality is expected to hold on  Galerkin solutions $u^n,u^k$. Considering the difference $z^{k,m}=u^k-u^n$ in \eqref{approximate} and taking the multiplier $z^{k,n}_t$,  the following equality emerges
\begin{align}
\label{conv-g-a}
\frac{1}{2}\frac{d}{d}||U^k(t)-U^n(t)||^2_{\mathcal{H}}+\int_{\Omega}\left[g(u_t^k)-g(u_t^n)\right]z^{k,n}_tdx=\int_{\Omega}\left[f(u^k)-f(u^n)\right]z^{k,n}_tdx.
\end{align}
Integrating \eqref{conv-g-a} from $0$ to $T$, we obtain
\begin{eqnarray}\label{conv-g-b}
\begin{aligned}
&\frac{1}{2}||U^k(T) -U^{n}(t)||^2_{\cH} + \int_0^T\int_{\Omega}\left[g(u_t^k)-g(u_t^n)\right]z^{k,n}_tdx dt \
\\
&\quad=\, \frac{1}{2}||U^k(0) -U^{n}(0)||^2_{\cH}
+\int_0^T\int_{\Omega}\left[f(u^k)-f(u^n)\right]z^{k,n}_tdxdt.
\end{aligned}
\end{eqnarray}
Due to the convergence of the initial data \eqref{initial-condition} the equality \eqref{conv-g-b} above implies
\begin{align*}
\lim\sup_{k,n\rightarrow +\infty}  \int_0^T\int_{\Omega}\left[g(u_t^k)-g(u_t^n)\right]z^{k,n}_tdx dt\leq  \lim \sup_{k,n\rightarrow +\infty} \int_0^T\int_{\Omega}\left[f(u^k)-f(u^n)\right]z_{k,n_t}dxdt.
\end{align*}
If $p<5$, then  the last term in the above inequality is zero. However, for $p=5$ more delicate argument is needed which depends on enhanced dissipation. 
To this end, we rewrite
\begin{eqnarray}
\label{conv-g-d}
\begin{aligned}
&\int_{\Omega}\left[f(u^k)-f(u^n)\,\right]z^{k,n}_tdx\\
%&\;\int_{\Omega}\int_0^1f'(\chi_{\theta})d\theta z^{k,n}z^{k,n}_tdx\\
%=\;\int_{\Omega}\int_0^1f'(\chi_{\theta})d\theta \frac{1}{2}\frac{d}{dt}|z^{k,n}|^2dx\\
&=\frac{1}{2}\frac{d}{dt}\left[\int_{\Omega}\int_0^1f'(\chi_{\theta})d\theta|z^{k,n}|^2dx\right]-\frac{1}{2}\int_{\Omega}\int_0^1f''(\chi_{\theta})(\chi_{\theta})_td\theta|z^{k,n}|^2dx.
\end{aligned}
\end{eqnarray}
where $\chi_{\theta}:=\theta u^k+(1-\theta)u^n$.
Then, substituting \eqref{conv-g-d} in \eqref{conv-g-a}, we have
\begin{eqnarray}
\label{conv-g-aa}
&& \frac{d}{d}\left[\,\frac{1}{2}||U^k(t)-U^n(t)||^2_{\mathcal{H}}+\frac{1}{2}\int_{\Omega}\int_0^1f'(\chi_{\theta})d\theta|z^{k,n}|^2dx\,\right]\\
&&\quad+\int_{\Omega}\left[g(u_t^k)-g(u_t^n)\right]z^{k,n}_tdx=\frac{1}{2}\int_{\Omega}\int_0^1f''(\chi_{\theta})(\chi_{\theta})_td\theta|z^{k,n}|^2dx.\nonumber
\end{eqnarray}
From the dissipativity condition \eqref{hyp-inf-f}, there exist $\mu\in[0,\lambda_1)$ and $N>0$ large such that
\begin{align}\label{f'-inf}f'(s)\ge -\mu,\quad \mbox{for}\quad |s|>N.\end{align}
On the other hand, from \eqref{hyp_f'}, we have
\begin{align}\label{f'-2}|f'(s)|\le C_f(1+|N|^4),\quad \mbox{for}\quad |s|\le N.\end{align}
So, setting $\Omega=\Omega_1\cup\Omega_2$, where $\Omega_1:=\{x\in \Omega:|\chi_{\theta}|> N\}$ and $\Omega_2:=\{x\in \Omega:|\chi_{\theta}|\le N\},$
it follows from \eqref{f'-inf} and \eqref{f'-2} that
\begin{eqnarray}\label{ff}
\begin{aligned}
& \frac{1}{2}\int_{\Omega}\int_0^1f'(\chi_{\theta})d\theta|z^{k,n}|^2dx=\frac{1}{2}\left[\int_{\Omega_1}+\int_{\Omega_2}\right]\int_0^1f'(\chi_{\theta})d\theta|z^{k,n}|^2dx\\%+ \frac{1}{2}\int_{\Omega_2}\int_0^1f'(\theta u^k+(1-\theta)u^n)d\theta|z^{k,n}|^2dx\\
&\ge-\frac{\mu}{2}\int_{\Omega_1}|z^{k,n}|^2dx-\frac{C_f(1+|N|^4)}{2}\int_{\Omega_2}|z^{k,n}|^2dx
\ge-\frac{K_f}{2}\int_{\Omega}|z^{k,n}|^2dx,
\end{aligned}\end{eqnarray}
where $K_f=\max\{\mu,C_f(1+|N|^4)\}$. On the other hand, from \eqref{hyp_f'}, H\"older inequality with $\frac{2}{3}+\frac{1}{3}=1$, embedding $H^1\hookrightarrow L^6(\Omega)$, and \eqref{bounded-1}, we have
\begin{align}\label{est2-6}
\frac{1}{2}\int_{\Omega}\int_0^1f'(\chi_{\theta})d\theta|z^{k,n}|^2dx\le C\left[\,1+\|u^k\|^4_{6}+\|u^n\|^4_6\,\right]\|z^{k,n}\|^2_6
%\le&\; C\left[\,1+\|\nabla u^k(t)\|^4+\|\nabla u^n(t)\|^4\,\right]\|\nabla z^{k,n}(t)\|^2\\
\le  C||U^k(t)-U^n(t)||^2_{\mathcal{H}},
\end{align}
where $C=C_{||U_0||_{\mathcal{H}}}>0$.
Thus, setting
$$E_{z^{k,n}}(t) := \frac{1}{2}||U^k(t) -U^{n}(t)||^2_{\cH}+\frac{1}{2} \int_{\Omega}\int_0^1f'(\chi_{\theta})d\theta|z^{k,n}|^2dx+\frac{K_f}{2}\|z^{k,n}(t)\|^2,$$
it follows from \eqref{ff} and \eqref{est2-6} that
\begin{eqnarray}\label{2gn-1}
\frac{1}{2}||U^k(t) -U^n(t) ||^2_{\cH } \leq
E_{z^{k,n}}(t) \leq  C ||U^k(t) -U^{n}(t) ||^2_{\cH }.
\end{eqnarray}
Now, using the definition of the functional $E_{z^{k,n}}$, we can rewrite \eqref{conv-g-aa} as follows
\begin{eqnarray}\label{2gn}
\begin{aligned}
&\frac{d}{dt}E_{z^{k,n}}(T)+\int_{\Omega}\left[g(u_t^k)-g(u_t^n)\right]|z^{k,n}|^2dx \\
&= \underbrace{K_f\int_{\Omega}z^{k,n}z^{k,n}_tdx}_{I_1}+\underbrace{\frac{1}{2} \int_{\Omega}\int_0^1f''(\chi_{\theta})(\chi_{\theta})_t  d\theta |z^{k,n}|^2 dx}_{I_2}.
\end{aligned}
\end{eqnarray}
From embedding $H^1\hookrightarrow H^0$, and \eqref{2gn-1}, it is easy to see that
\begin{align}\label{est-ss}
\left|I_1\right|\le \frac{K_f}{\lambda_1^{1/2}}\|z^{k,n}_t\|\|\nabla z^{k,n}\|\le \frac{K_f}{2\lambda_1^{1/2}}\left[\|z^{k,n}_t\|^2+\|\nabla z^{k,n}\|^2\right]\le \frac{K_f}{\lambda_1^{1/2}}E_{z^{k,n}}(t).
\end{align}
From \eqref{hyp_f''}, H\"older inequality with $\frac{1}{2}+\frac{1}{6}+\frac{1}{3}=1$, embedding $H^1\hookrightarrow L^6(\Omega)$, \eqref{2gn-1}, and \eqref{bounded-1}, we have
\begin{align}
\label{conv-g-x}
|I_2|\le C\left[\,1+\|u^k\|_6^3+\|u^m\|_6^3\,\right]\left[\,\|u^k_t\|_6+\|u^n_t\|_6\,\right]\|\|z^{n,k}\|_6^2
\leq C\psi(t,u^k,u^n)E_{z^{k,n}}(t),
\end{align}
where $\psi(t,u^k,u^n):=\|u^k_t(t)\|_6^6+\|u^n_t(t)\|_6^6\in L^1(0,T)$.

Thus, using \eqref{est-ss} and \eqref{conv-g-x}, returning to \eqref{2gn}, we obtain
\begin{eqnarray}\label{2gn2}
\frac{d}{dt}E_{z^{k,n}}(t)+\int_{\Omega}\left[g(u_t^k)-g(u_t^n)\right]z^{k,n}_tdx\le C\psi(t,u^k,u^n)E_{z^{k,n}}(t).
\end{eqnarray}
Using the multiplier $e^{-C\int_0^t\psi(s,u^k,u^n)ds}$ and integrating \eqref{2gn2} from $0$ to $t$, we conclude that
\begin{align}\label{2gn3}
\underbrace{E_{z^{k,n}}(t)}_{\ge 0}+\int_0^t\int_{\Omega}\underbrace{e^{C\int_s^t\psi(\tau,u^k,u^n)d\tau}}_{\ge 1}\left[g(u_t^k)-g(u_t^n)\right]z^{k,n}_tdxds
\le e^{C\int_0^t\psi(s,u^k,u^n)ds}E_{z^{k,n}}(0),
\end{align}
for all $t\in [0,T]$.
Hence, using \eqref{bounded-2}, \eqref{2gn-1}, and \eqref{initial-condition}, it follows from \eqref{2gn3} that
\begin{align}\label{2gn4}
\lim\sup_{k,n\rightarrow +\infty}\int_0^t\int_{\Omega}\left[g(u_t^k)-g(u_t^n)\right]\left[u_t^k-u_t^n\right]dx dt
\le C \lim\sup_{n,k \rightarrow \infty} ||U^n_0 -U^{k}_0||^2_{\cH }=0,
\end{align}
for all $t\in [0,T]$. Therefore, \eqref{2gn4} implies the validity of (\ref{1g}), which by virtue of \cite[Lemma 2.3]{Barbu} provides the identification of the weak limit.

%XXXXXXX

\ifdefined\xxxx
To prove this we will use the monotone operator method applied to nonlinear hyperbolic operators  established by Lions \cite[Ch. 2, Section 6]{Lions}. In order, using the multiplier $\omega=u^k_t$ in \eqref{approximate} and integral from $0$ to $T$ the obtained equation we find that
\begin{align}\label{Eq-xx}
E(U^k(T))+\int_0^T\int_{\Omega}g(u^k_t)u^k_tdxdt=E(U^k(0)).
\end{align}
Using the definition of $E$ and the strong convergence of \eqref{initial-condition} on the initial data it is easy to verify that $E(U(0))=\lim_{k\rightarrow +\infty}E(U^k(0)).$
Moreover, from \eqref{equiv_E}, we have $E(U^k(T))\ge -C_{\nu}|\Omega|-\frac{1}{\omega\lambda_1}\|h\|^2.$ Then using convergences \eqref{conv-weak-star-U} and Fatou's lemma, we conclude that $E(U(t))\le \lim\inf_{k\rightarrow +\infty} E(U^k(T)).$
Hence, it follows from \eqref{Eq-xx} that
\begin{align}\label{Eq-xxx}
E(U(T))+\lim\inf_{k\rightarrow +\infty}\int_0^T\int_{\Omega}g(u^k_t)u^k_tdxdt\le E(U(0)).
\end{align}
Next, writing \eqref{limit-eq-a.e} in the form
\begin{align}
\label{limit-eq-a.e-2}
u_{tt}-\Delta u=\underbrace{h}_{\in L^2(Q)}-\underbrace{(\chi+f(u))}_{\in L^{6/5(Q)}}
\end{align}
and applying \cite[Ch. 2, Lemma 6.1]{Lions} in \eqref{limit-eq-a.e-2}, we obtain that
\begin{align}
\label{conv-z} \frac{1}{2}||U(t)||^2_{\mathcal{H}}\ge \frac{1}{2}||U(0)||^2_{\mathcal{H}}+\int_0^T\int_{\Omega}\left[h-\chi-f(u)\right]u_tdxdt.
\end{align}
Using that $f(u)\in L^{6/5}(Q)$, $h\in L^2(Q)$, and $u_t\in L^{6}(Q)\hookrightarrow L^{2}(Q)$, we have $f(u)u_t\in L^{1}(Q)$ and $hu_t\in L^{1}(Q)$. Then, we can approximate $u$ by smooth function and proceed with a standard argument to deduce that
\begin{align*}
\int_{\Omega}\left[\,h-f(u)\,\right]u_tdx=\frac{d}{dt}\int_{\Omega}\left[\,hu-F(u)\,\right]dx,\quad \mbox{for almost}\quad t\in[0,T].
\end{align*}
From the above equality, it follows from \eqref{conv-z} that
\begin{align}
\label{conv-zz} E(U(T))+\int_0^T\chi u_tdxdt\ge E(U(0)).
\end{align}
Thus, substituting \eqref{conv-zz} in \eqref{Eq-xxx}, we have
\begin{eqnarray}\label{Eq-xxxx}
\lim\inf_{k\rightarrow +\infty}\int_0^T\int_{\Omega}g(u^k_t)u^k_tdxdt\le \int_0^T\int_{\Omega}\chi u_tdxdt.
\end{eqnarray}
From \eqref{Eq-xxxx}, we have
\begin{eqnarray}\label{conv-zzz}
\lim\inf_{k\rightarrow +\infty}\int_0^T\int_{\Omega}\left[\,g(u^k_t)-g(v)\,\right][u^k_t-v]dxdt\le \int_0^T\int_{\Omega}\left[\chi-g(v)\right][u_t-v]dxdt,
\end{eqnarray}
for all $v\in L^6(Q)$. On the other hand, using \cite[Remark 4:1]{EECT-2017} with $\gamma=4$, we have
\begin{align}\label{conv-g-11}
\int_0^T\int_{\Omega}\left[g(u^k_t)-g(v)\right][u^k_t-v]dxdt\ge \frac{\kappa_0}{5}\int_0^T\int_{\Omega}|u^k_t-v|^6dxdt\ge 0,\quad \forall v\in L^6(Q).
\end{align}
Thus, combining \eqref{conv-zzz} and \eqref{conv-g-11}, we conclude that
\begin{align}\label{conv-g-111}
\int_0^T\int_{\Omega}\left[\,\chi-g(v)\right][u_t-v]dxdt\ge 0,\quad \forall v\in L^6(Q).
\end{align}
For any $w\in L^6(Q)$ and $\eta\in \mathbb{R}$, using $v=u_t-\eta w$ in \eqref{conv-g-111}, by standard arguments we obtain that
\begin{align*}
\int_0^T\int_{\Omega}\left[\,\chi-g(u_t)\right]wdxdt= 0,\quad \forall w\in L^6(Q),
\end{align*}
which implies that  $\chi=g(u_t)$. \\fi
\fi
%XXXXXXXXX
Therefore, letting $k\rightarrow +\infty$ in \eqref{approximate} and making use of above convergence show that
$U = (u, u_t)$ is a weak solution claimed in Theorem \ref{theo-global} satisfying
$$u_{tt}-\Delta u+g(u_t)+f(u)=h\quad $$ in the sense of Definition \ref{weak-solution}.
This concludes the proof of Part ${\bf(I)}$ of Theorem \ref{theo-global}.
\begin{remark}
We note that the same conclusion as above holds under weaker assumptions imposed on the source $f$. It suffices that the constraints on the lower bound of $f$ is imposed only for critical level of nonlinearity: $\lim_{ |s|\rightarrow \infty } \frac{f'(s)}{s^4}>-\nu $. The subcritical terms will converge to zero by the virtue of the  compactness.
\end{remark}			

\subsection{Proof of Theorem \ref{theo-global}-{\bf(II)}: Energy Equality}			
Energy equality [as opposed to inequality] is a critical tool in asserting well-posedness of the dynamical system generated  by {\it weak solutions} of the system. The latter are typically shown to be weakly continuous, a consequence of apriori bounds. Energy {\it equality} allows  an upgrade to continuity. In addition, energy identity  is also fundamental in proving asymptotic compactness. The latter is particularly important when dealing with nonlinearities of critical nature, where standard methods based on squeezing estimates are  noneffective. Instead,   long time horizon estimates of  a given solution along the  trajectory provides more flexibility.  In order to take advantage of this technique \cite{ball}, energy {\it equality} is essential.

While energy identity can be formally obtained by a simple integration by parts with a suitable  multiplier, the real issue is a justification of this procedure in the absence of sufficient regularity of weak solutions. In the case of the model under study, one would like to multiply the equation by just velocity $u_t$. However, $L^p $ membership of this element does not permit formal calculations. The definition of weak solutions requires testing with $H^1$  	 elements.
It may be noted that energy {\it inequality} cen be derived via weak lower semicontinuity from  the weak limits of approximated solutions. However, the method will not produce {\it equality}. In order to comply  with this requirement, we shall follow procedure originated  in \cite{Koch-lasiecka} where time derivatives of solutions  are suitably approximated by  rather special finite difference method. Thus, to derive the energy identity, we employ a finite difference approximation of the time derivatives, following the approach used in \cite[Appendix 5]{Assia-Irena}.

To proceed- let $v\in B(0,T;V)$ where $V$ is a Hilbert space and $B([0,T];V)$ is the space of $V$-value functions which are bounded on $[0,T]$, endowed with usual norm $\|y\|_{B([0,T];V)}=\sup_{t\in [0,T]}\|y(t)\|_V$. Let $\epsilon>0$ be a small parameter destined to go to zero and consider the extension $v(t)$ to all $t\in \mathbb{R}$ defined by $v(t)=v(T)$ if $t\ge T$ and $v(t)=v(0)$ if $t\le 0$. With these extensions, we define
%\begin{eqnarray*}
$$v^+_{\epsilon}(t)\equiv v(t+\epsilon)-v(t),~~
v^-_{\epsilon}(t)\equiv v(t)-v(t-\epsilon),~~
D_{\epsilon}v(t)\equiv \frac{1}{2\epsilon}\left[v^+_{\epsilon}(t)+v^-_{\epsilon}(t)\right].
$$
%\end{eqnarray*}
The following proposition stated in \cite[Proposition 1]{Lorena-lasiecka} is a slight extension of the corresponding result proved in \cite[Proposition 5.2]{Assia-Irena} and \cite{Koch-lasiecka} where it was used within the context of nonlinear dynamic elasticity.
\begin{proposition}\label{prop-energy-equality}
\begin{itemize}
\item Let $v$ be a weakly continuous with the values in a Hilbert space $V$. Then
\begin{align}\label{energy-a}
\lim_{\epsilon\rightarrow  0}\int_0^T\left(v(t),D_{\epsilon}v(t)\right)_Vdt=\frac{1}{2}\left[\,\|v(T)\|^2_V-\|v(0)\|^2_V\,\right].
\end{align}
\item For $v\in W^{1,p}(0,T;W)$, the following limits are well defined in $L^p(0,T;W)$, for all $1<p<\infty$ and any Banach space $W$:
\begin{align}\label{energy-b}
\lim_{\epsilon\rightarrow  0}D_{\epsilon}v=v_t,\quad\lim_{\epsilon\rightarrow  0}\frac{1}{\epsilon}v^+_{\epsilon}=v_t,\quad \lim_{\epsilon\rightarrow  0}\frac{1}{\epsilon}v^-_{\epsilon}=v_t.
\end{align}
Moreover, if $v_t$ is weakly continuous with the values in $W$, then for every $t\in(0,T)$,
$D_{\epsilon}v(t)\rightarrow v_t(t)$ weakly in $W$
and
\begin{align*}
\lim_{\epsilon\rightarrow  0}v^-_{\epsilon}(T)=v_t(T),\quad\lim_{\epsilon\rightarrow  0}\frac{1}{\epsilon}v^+_{\epsilon}(0)=v_t(0),\quad \mbox{weakly in}\quad W.
\end{align*}
\item Let $H\subset V\subset H'$, $v_{tt}\in L^2(0,T;H')$ and $v\in L^2(0,T;H)$. Then
\begin{align}\label{energy-c}
\lim_{\epsilon\rightarrow  0}\int_0^T\left(v_{tt}(t),D_{\epsilon}v(t)\right)_Vdt=\frac{1}{2}\left[\,\|v_t(T)\|^2_V-\|v_t(0)\|^2_V\,\right].
\end{align}
\end{itemize}
\end{proposition}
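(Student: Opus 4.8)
The plan is to treat the three assertions separately. The common engine is the symmetry of the inner product under the shift $t\mapsto t\pm\epsilon$ together with weak continuity: these turn the symmetric difference $D_\epsilon$ into Steklov averages whose contributions localize to boundary layers of width $\epsilon$, where weak continuity identifies the limits.

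For the first identity I would expand $\int_0^T(v,D_\epsilon v)_V\,dt=\frac{1}{2\epsilon}\int_0^T(v(t),v(t+\epsilon)-v(t-\epsilon))_V\,dt$, change variables $s=t\mp\epsilon$ in the two pieces, and use the symmetry $(v(s),v(s+\epsilon))_V=(v(s+\epsilon),v(s))_V$ so that the bulk integrals over a common window cancel by telescoping. What survives are two boundary-layer integrals, over $[T-\epsilon,T]$ and $[-\epsilon,0]$; inserting the constant extension rewrites them as $\frac{1}{2\epsilon}(\int_{T-\epsilon}^T v\,dt,\,v(T))_V$ and $\frac{1}{2\epsilon}(v(0),\int_0^\epsilon v\,ds)_V$. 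Since $v$ is weakly continuous, the averages $\frac1\epsilon\int_{T-\epsilon}^T v$ and $\frac1\epsilon\int_0^\epsilon v$ converge weakly in $V$ to $v(T)$ and $v(0)$; pairing against the fixed vectors $v(T),v(0)$ yields exactly $\frac12\|v(T)\|_V^2-\frac12\|v(0)\|_V^2$.

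For the second item I would write each difference quotient as an average of the derivative, e.g. $\frac1\epsilon v_\epsilon^+(t)=\frac1\epsilon\int_t^{t+\epsilon}v_t\,d\tau$, obtain the uniform bound $\|\frac1\epsilon v_\epsilon^+\|_{L^p(0,T;W)}\le\|v_t\|_{L^p(0,T;W)}$ from Minkowski's integral inequality, verify the convergence first on the dense class $C^1([0,T];W)$ (where it is immediate from the fundamental theorem of calculus and continuity of translation in $L^p$), and extend by the uniform bound; the $\epsilon$-wide boundary layers are negligible because $\|v_t\|_{L^p}$ over a shrinking interval tends to zero. The weak pointwise statements then follow from $D_\epsilon v(t)=\frac{1}{2\epsilon}\int_{t-\epsilon}^{t+\epsilon}v_t\,d\tau$ together with weak continuity of $v_t$: testing against any $\phi\in W'$ reduces to continuity of $\tau\mapsto\langle\phi,v_t(\tau)\rangle$, while the one-sided averages $\frac1\epsilon\int_{T-\epsilon}^Tv_t$ and $\frac1\epsilon\int_0^\epsilon v_t$ produce the endpoint limits $v_t(T),v_t(0)$.

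The third identity is the delicate one and the place where I expect the main obstacle. The plan is to integrate by parts in time, using $v_{tt}=(v_t)_t$ to move one time derivative off $v_{tt}$ and onto $\phi(t):=v(t+\epsilon)-v(t-\epsilon)$. The subtlety is that, because the extension is \emph{constant}, $\frac{d}{dt}\phi$ equals $v_t(t+\epsilon)-v_t(t-\epsilon)$ only when $v_t$ is extended by \emph{zero}, producing genuine corrections in the two $\epsilon$-layers. This gives $\int_0^T(v_{tt},D_\epsilon v)_V\,dt=\frac{1}{2\epsilon}\big[(v_t(t),\phi(t))_V\big]_0^T-\int_0^T(v_t,\widehat D_\epsilon v_t)_V\,dt$, where $\widehat D_\epsilon$ is the symmetric difference built from the zero-extension of $v_t$. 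Repeating the first computation with this zero extension, the surviving boundary averages now vanish identically, so $\int_0^T(v_t,\widehat D_\epsilon v_t)_V\to0$; meanwhile, via $v(T)-v(T-\epsilon)=\int_{T-\epsilon}^Tv_t$, the endpoint term is $\frac{1}{2\epsilon}(v_t(T),\int_{T-\epsilon}^Tv_t)_V\to\frac12\|v_t(T)\|_V^2$, and similarly $\frac12\|v_t(0)\|_V^2$ at $t=0$, which is the claimed value. The hard part will be legitimizing this integration by parts in the low-regularity Gelfand-triple setting $H\subset V\subset H'$, where $v_{tt}$ lives only in $H'$: I would rely on the standard regularity available in this framework, namely that $v_t$ is weakly continuous with values in $V$ (so that $\|v_t(T)\|_V,\|v_t(0)\|_V$ are meaningful and the endpoint $H'$--$H$ pairings make sense), and then let the boundary-layer bookkeeping be carried out by weak continuity of $v_t$ in $V$.
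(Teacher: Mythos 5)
First, a remark on the comparison itself: the paper does not prove Proposition \ref{prop-energy-equality}. It quotes the statement from \cite[Proposition 1]{Lorena-lasiecka}, as an extension of \cite[Proposition 5.2]{Assia-Irena} and \cite{Koch-lasiecka}, so your argument can only be judged against the statement and its intended use, not against an internal proof.

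Your treatment of the first two items is correct. For \eqref{energy-a}, the change of variables, the cancellation of the bulk integrals, and the identification of the two surviving boundary layers with $\frac{1}{2\epsilon}\bigl(\int_{T-\epsilon}^T v\,dt,\,v(T)\bigr)_V$ and $\frac{1}{2\epsilon}\bigl(v(0),\int_0^\epsilon v\,ds\bigr)_V$ are exactly what the constant extension produces, and weak continuity (which also gives boundedness, by uniform boundedness on the compact interval) passes these averages to the limit. For \eqref{energy-b}, writing the difference quotients as Steklov averages of the \emph{zero extension} of $v_t$, using Minkowski's integral inequality and continuity of translation in $L^p$, is a complete argument, and you correctly note that the $\epsilon$-wide layers are harmless in $L^p$.

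The genuine gap is in the third item, precisely at the sentence where you dispose of the low-regularity issue. Your integration-by-parts scheme is sound, and the observation that the symmetric difference built on the zero extension satisfies $\int_0^T\bigl(v_t,\widehat D_\epsilon v_t\bigr)_V dt=0$ identically is the right mechanism, as is the identification of the endpoint terms with $\frac{1}{2\epsilon}\bigl(v_t(T),\int_{T-\epsilon}^T v_t\bigr)_V$ and its analogue at $t=0$. But you then assert that $v_t\in C_w([0,T];V)$ is ``the standard regularity available in this framework.'' It is not. From $v\in L^2(0,T;H)$ and $v_{tt}\in L^2(0,T;H')$, the intermediate-derivative and trace theorems give $v_t\in L^2(0,T;V)$ and continuity of $v_t$ only into the \emph{larger} interpolation space $[V,H']_{1/2}$ (equivalently, $v_t(0),v_t(T)\in[H,H']_{3/4}$); boundedness of $\|v_t(t)\|_V$, hence weak continuity in $V$, can genuinely fail. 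A spectral construction shows this: in the triple generated by a positive operator with eigenvalues $\lambda_k$, let the $k$-th coefficient of $v_t$ be a mean-zero spike of width $\lambda_k^{-1}$ and height $h_k$ with $\sum_k h_k^2/\lambda_k<\infty$ but $h_k\to\infty$; then $v\in L^2(0,T;H)$ and $v_{tt}\in L^2(0,T;H')$, yet $\sup_t\|v_t(t)\|_V=\infty$, so no $V$-weakly continuous representative exists and the right-hand side of \eqref{energy-c} can be meaningless. The repair is to take $v_t\in C_w([0,T];V)$ as an explicit additional hypothesis rather than a consequence; this is harmless for the paper, since the proposition is applied to weak solutions, for which $(u,u_t)\in C_w(0,T;\cH)$ holds by Definition \ref{weak-solution}, i.e.\ $u_t\in C_w(0,T;H^0)$ with $V=H^0$, $H=H^1$, $H'=H^{-1}$. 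With that hypothesis added (and the standard absolute-continuity/product-rule lemma in the Gelfand triple, which you flag but should cite or prove), your argument for \eqref{energy-c} goes through as written.
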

Next, using variational form \eqref{variational-problem} of problem \eqref{P} with $\psi=D_\epsilon u(t)\in H^1$ leads to:
\begin{eqnarray}
\label{energy-equal-1}
\begin{aligned}
&\int_0^Tu_{tt}D_{\epsilon}udxdt+\int_0^T\int_{\Omega}\nabla u\nabla D_{\epsilon}udxdt\\
&+\int_0^T\int_{\Omega}g(u_t)D_{\epsilon}udxdt+\int_0^T\int_{\Omega}f(u)D_{\epsilon}udxddt=\int_0^T\int_{\Omega}hD_{\epsilon}udxdt.
\end{aligned}
\end{eqnarray}
By direct application of \eqref{energy-c}, we obtain:
\begin{align}
\label{energy-equal-3}
\lim_{\epsilon\rightarrow 0}\int_0^Tu_{tt}D_{\epsilon}udxdt= \frac{1}{2}\|u_t(T)\|^2-\frac{1}{2}\|u_t(0)\|^2.
\end{align}
And, by direct application of \eqref{energy-a}, we get
\begin{align}
\label{energy-equal-4}
\lim_{\epsilon \rightarrow 0}\int_0^T\int_{\Omega}\nabla uD_{\epsilon}(\nabla u)dxdt=\frac{1}{2}\|\nabla u(T)\|^2-\frac{1}{2}\|\nabla u(0)\|^2.
\end{align}
Using Assumption \eqref{hyp_g'} and that $u_t\in L^{6}(Q)$, we have
\begin{align*}
\int_0^T\int_{\Omega}|g(u_t)|^{6/5}dxdt\le C\int_0^T\int_{\Omega}[\,|u_t|^{6/5}+|u_t|^{6}\,]dxdt<\infty.
\end{align*}
That is, $g(u_t)\in L^{6/5}(Q).$ On the other hand, using $u_t\in L^6(Q)$, it follows from \eqref{energy-b} that $$\lim_{\epsilon \rightarrow 0}D_{\epsilon}(u)=u_t\in L^6(Q).$$
Hence, 
\begin{eqnarray}
\label{energy-equal-5}
\lim_{\epsilon\rightarrow 0}\int_0^T\int_{\Omega}g(u_t)D_{\epsilon}udxdt=\int_0^T\int_{\Omega}g(u_t)u_tdxdt.
\end{eqnarray}
Now, using Assumption \eqref{hyp_f'}, embedding $H^1\hookrightarrow L^{6}(\Omega)$ and that $u(t)\in H^1$ for all $t\in[0,T]$, we get
\begin{align*}
\int_0^T\int_{\Omega}|f(u)|^{6/5}dxdt\le C\int_0^T\int_{\Omega}[\,|u|^{6/5}+|u|^{6}\,]dxdt<\infty.
\end{align*}
That is, $f(u)\in L^{6/5}(Q).$ Again using $\lim_{\epsilon \rightarrow 0}D_{\epsilon}(u(t))=u_t\in L^6(Q),$ we conclude that
\begin{eqnarray}
\label{energy-equal-5'}
\lim_{\epsilon\rightarrow 0}\int_0^T\int_{\Omega}f(u_t)D_{\epsilon}udxddt=\int_0^T\int_{\Omega}f(u)u_tdxddt.
\end{eqnarray}
Finally, it follows directly from \eqref{energy-b} that
\begin{eqnarray}
\label{energy-equal-6}
\lim_{\epsilon\rightarrow 0}\int_0^T\int_{\Omega}hD_{\epsilon}udxddt=\int_0^T\int_{\Omega}hu_tdxddt.
\end{eqnarray}
Thus, taking the limit as $\epsilon \rightarrow 0$ in \eqref{energy-equal-1} and using convergences \eqref{energy-equal-3}-\eqref{energy-equal-6}, we conclude that
\begin{eqnarray}
\label{energy-equal-7}
\begin{aligned}
&\frac{1}{2}\left[\,\|u_t(T)\|^2-\|u_t(0)\|^2\,\right]+\frac{1}{2}\left[\,\|\nabla u(T)\|^2-\|\nabla u(0)\|^2\,\right]\\
&+\int_0^T\int_{\Omega}g(u_t)u_tdxddt+\int_0^T\int_{\Omega}f(u)u_tdxdt=\int_0^T\int_{\Omega}hu_tdxdt.
\end{aligned}
\end{eqnarray}
Therefore, using that
\begin{align*}
\int_0^T\int_{\Omega}f(u)u_tdxdt=\int_{\Omega}F(u(T))dx-\int_{\Omega}F(u(0))dx; ~~~
%\end{align*}
%and
%\begin{align*}
\int_0^T\int_{\Omega}hu_tdxdt=\int_{\Omega}hu(T)dx-\int_{\Omega}hu(0)dx,
\end{align*}
we obtain from \eqref{energy-equal-7} the desired energy identity for weak solutions \eqref{energy-equality}. This concludes the proof of Part ${\bf(II)}$.

\subsection{Proof of Theorem \ref{theo-global}-{\bf(III)}: Hadamard Well-posedness}\label{cont-dependence}
Hadamard wellposedness  of weak solutions includes two additional properties of the solutions: uniqueness  and continuous dependence on the data [often refereed to as stability or  robustness]. In fact, the key property is the uniqueness. Having energy identity and the uniqueness of weak solutions allows to deduce the robustness \cite{Koch-lasiecka}. The latter needs to be shown for the already constructed solutions - and not necessarily  from the definition of weak solutions [due to uniqueness].
Let $u^1(t)\neq u^2(t)$ be two different weak solutions of problem \eqref{P} with $(u^1(0),u^1_t(0))=(u^1_0,u^1_1)\in\mathcal{H}$ and $(u^2(0),u^2_t(0))=(u^2_0,u^2_1)\in\mathcal{H}$. %Then the difference $z(t):=u^1(t)-u^2(t)$ satisfies the equation
%\begin{eqnarray}\left\{\begin{array}{l}\label{approximate-diff}\displaystyle{
%	z_{tt}-\Delta z+\left[\,g(u^1_t)-g(u^2_t)\,\right]+\left[\,f(u^1)-f(u^2)\,\right]=0,\quad \mbox{in}\quad \Omega\times [0,\infty),}\\
%\displaystyle{ (z(0),z_t(0))=(u_{0}-v_{0},u_{1}-v_{1}),\quad z=0,\quad \mbox{on}\quad \Gamma.}\end{array}\right.
%\end{eqnarray}
Using the energy {\it identity} \eqref{energy-equality} for the difference $z(t):=u^1(t)-u^2(t)$, we obtain the following equality valid for all weak solutions
\begin{align}
\label{est2-1}
&	 \frac{1}{2}\frac{d}{dt}||U^1(t)-U^2(t)||^2_{\mathcal{H}}+\int_{\Omega}\left[g(u^1_t)-g(u^2_t)\right]z_tdx+\int_{\Omega}\left[f(u^1)-f(u^2)\right]z_tdx=0.
\end{align}
From Assumption \eqref{hyp_g'}, we obtain
\begin{eqnarray}
\label{est2-2}
\begin{aligned}
\int_{\Omega}\left[g(u^1_t)-g(u^2_t)\right]z_tdx=&\;\int_{\Omega}\int_0^1g'(\theta u^1_t+(1-\theta)u^2_t)d\theta |z_t|^2dx\\
\ge &\;\kappa_0\int_{\Omega}\int_0^1|\theta u^1_t+(1-\theta)u^2_t|^{4}d\theta |z_t|^2dx\ge 0.
\end{aligned}
\end{eqnarray}
On the other hand, as in \eqref{conv-g-d} we can rewrite
\begin{align}
\label{est2-3}
\int_{\Omega}\left[f(u^1)-f(u^2)\,\right]z_tdx%=&\;\int_{\Omega}\int_0^1f'(\theta u^1+(1-\theta)u^2)d\theta zz_tdx\\
%=&\;\int_{\Omega}\int_0^1f'(\theta u^1+(1-\theta)u^2)d\theta \frac{1}{2}\frac{d}{dt}|z|^2dx\\
=\;\frac{1}{2}\frac{d}{dt}\left[\int_{\Omega}\int_0^1f'(\xi_{\theta})d\theta|z|^2dx\right]
- \frac{1}{2}\int_{\Omega}\int_0^1f''(\xi_{\theta})(\xi_{\theta})_td\theta|z|^2dx.
\end{align}
where $\xi_{\theta}=\theta u^1+(1-\theta)u^2$.
Then, using \eqref{est2-2} and \eqref{est2-3}, it follows from \eqref{est2-1} that
\begin{eqnarray}
\label{est2-4}
\begin{aligned}
\frac{d}{dt}E_{z}(t)\le \underbrace{K_f\int_{\Omega}zz_tdx}_{\mathrm{I}_1}+\underbrace{\frac{1}{2}\int_{\Omega}\int_0^1\left|f''(\xi_{\theta})\right|\left|\,(\xi_{\theta})_t\,\right|d\theta|z|^2dx}_{\mathrm{I}_2},
\end{aligned}
\end{eqnarray}
where
\begin{align*}E_{z}(t):=\frac{1}{2}||U^1(t)-U^2(t)||^2_{\mathcal{H}}+\frac{1}{2}\int_{\Omega}\int_0^1f'(\xi_{\theta})d\theta|z|^2dx+\frac{K_f}{2}\|z(t)\|^2,
\end{align*}
with the constant $K_f$ obtained as in \eqref{ff}. By estimates analogous to \eqref{ff} and \eqref{est2-6} we obtain the following result
\begin{align}\label{est2-7}
\frac{1}{2}||U^1(t)-U^2(t)||^2_{\mathcal{H}}\le E_{z}(t)\le C||U^1(t)-U^2(t)||^2_{\mathcal{H}}.\end{align}
%This proves the affirmation above.
And, as in \eqref{est-ss} and \eqref{conv-g-x} we also obtain
\begin{align*}
\left|\mathrm{I}_1\right|\le \frac{K_f}{\lambda_1^{1/2}}E_z(t)\quad \mbox{and}\quad
\left|\mathrm{I}_2\right|\le C\psi(t,u^1,u^2)E_{z}(t),
\end{align*}
where $\psi(t,u^1,u^2)=1+\|u^1_t(t)\|_6^6+\|u^2_t(t)\|_{6}^6$. Substituting $|\mathrm{I}_1|$ and $|\mathrm{I}_2|$ into \eqref{est2-4}, we have
\begin{align}
\label{est2-9}
\frac{d}{dt}E_{z}(t)\le C\psi(t,u^1,u^2)E_{z}(t).
\end{align}
Applying Gronwall's lemma in \eqref{est2-9}, we obtain
\begin{align}
\label{est2-10}
E_{z}(t)\le e^{C\int_0^t\psi(s,u^1,u^2)ds}E_{z}(0),\quad \forall t\in[0,T].
\end{align}
Therefore, using equivalence \eqref{est2-7} and \eqref{bounded-2}, it follows from \eqref{est2-10} that
\begin{align*}
||U^1(t)-U^2(t)||^2_{\mathcal{H}}\le e^{Ct}||U^1(0)-U^2(0)||^2_{\mathcal{H}},\quad \forall t\in[0,T],
\end{align*}
where $C=C(||U^i_0||_{\mathcal{H}})$, $i=1,2$. This proves the inequality \eqref{Lipschitz-property} and concludes the proof of Part ${\bf(III)}$. Therefore the proof of Theorem \ref{theo-global} is complete.
\qed
\begin{remark}
\begin{enumerate}
\item
The arguments given above provide uniqueness and continuous dependence in \enquote{one shot}. This should be contrasted with \cite{Lorena-lasiecka} where continuous dependence requires an  additional regularity of solutions.\item
The proof of the Theorem above shows that every weak solution can be obtained as a strong limit of Galerkin solutions. This remark is useful when one attempts to provide smooth approximations of weak solutions. \item
Energy identity allows to upgrade   in standard manner weak continuity of solutions to strong continuity removing $C_w$ from the characterization of the  solutions.
As a consequence, solutions form well defined dynamical system.
\end{enumerate}
\end{remark}

\subsection{Generation of a Dynamical System} 
%Recall  that a {\bf dynamical system} is a pair of objects $(X,S_t)$ consisting of a complet metric space $X$ and a family of continuous mappings $\{S_t:\,t\in \mathbb{R}^+\}$ of $X$ into itself with the semigroup properties:
%$S_0=I,\quad S_{t+\tau}=S_t\circ S_{\tau}.$$
%$S_t$ is called evolution semigroup (or evolution operator). Moreover, $\{S_t:\,t\in \mathbb{R}^+\}$  is a strogly continuous semigroup of bounded linear operators on $X$ if
%$$\lim_{t\rightarrow 0}S_tx=x\quad \mbox{for every}\quad x\in X.$$
%It is also called: semigroup of class $C_0$ or simply a $C_0$-semigroup.
Theorem \ref{theo-global} guarantees that weak solutions to problem \eqref{P} generate a dynamical system $(\mathcal{H},S_t)$. The evolution operator $S_t : \mathcal{H} \to \mathcal{H}$ is characterized by the relation
\begin{align*}
S_t U_0 = (u(t), u_t(t))=U(t), \quad U_0 \in \mathcal{H},
\end{align*}
where $U$ denotes the unique weak solution to equation \eqref{P}. Moreover, it directly follows from Theorem \ref{theo-global}-${\bf(III)}$ that the dynamical system $(\mathcal{H}, S_t)$ exhibits the  local \textbf{Lipschitz continuity property} on every interval $[0,T]$.
\begin{equation}\label{Lipschitz-inequality}
\| S_t U_0^1 - S_t U_0^2 \|_{\mathcal{H}}^2 \leq e^{c_Rt} \| U_0^1 - U_0^2 \|_{\mathcal{H}}^2, \quad \forall t \in [0, T],
\end{equation}
where $U_0^i \in \mathcal{H}$ with $\|U^{i}_0 \|_{\mathcal{H}} \leq R$, $i=1,2$ and $c_{R}$ is a positive constant depending on $R$.

\section{Proof of Theorem \ref{theo_main1}: Global Attractors} 
The existence of a compact global attractor is established by the results presented in Propositions \ref{Prop-asymp-compact}, \ref{Prop-gradient-system}, \ref{Prop0}, and \ref{Prop-stationary solutions}.
Proposition \ref{Prop-asymp-compact} shows that the dynamical system $(\mathcal{H},S_t)$ is asymptotically compact, while Proposition \ref{Prop-gradient-system} ensures that
$(\mathcal{H},S_t)$ is gradient. From Proposition \ref{Prop0}, $\Phi(U)=E(U)$ is a Lyapunov function bounded from above on any bounded set of $\mathcal{H}$ and the set $\Phi_R=\{U:\Phi(U)\le R\}$ is bounded for every $R$. Finally, it follows from Proposition \ref{Prop-stationary solutions} that the set
$\mathcal{N}$ is bounded and there exists an absorbing ball for the system  Corollary 2.29 \cite{chueshov-white}. Therefore, it follows directly from \cite[Corollary 7.5.7]{chueshov-yellow} that the system $(\mathcal{H},S_t)$ has a compact global attractor
$$\mathfrak{A}=\mathrm{M}^u(\mathcal{N}).$$
In particular, for the case $C_{\nu}=0$ and $h\equiv 0$ the attractor is the trivial $\mathfrak{A}=\{(0,0)\}.$
\begin{remark}
In Appendix~\ref{appendix}, we present a direct construction of an absorbing ball, which also provides specific parameters delineating the construct. 
\end{remark}

%The remainder of this section is devoted to the proof of propositions \ref{Prop-absorbing-set}, \ref{Prop-asymp-compact}, and \ref{Prop-gradient-system} mentioned in the proof of Theorem \ref{theo_main1} above.

\subsection{Asymptotic Compactness}
\begin{proposition} \label{Prop-asymp-compact} Let the assumptions of Theorem \ref{theo-global} hold. Then, the dynamical system $(\mathcal{H},S_t)$ associated with problem \eqref{P} is asymptotically compact. %Specifically, for every sequence \( \{U^m_0\}_{m\in \mathbb{N}} \subset B\), and every sequence of times \( t_m \to \infty \), there exists a subsequence \( \{m_k\} \) such that
%	\[
%	S_{t_{m_k}}U^{m_k}_0 \rightarrow_{k\to\infty} U_0 \quad \text{strongly in }\quad \mathcal{H}.
%	\]
\end{proposition}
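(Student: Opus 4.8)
The plan is to follow the energy (Ball's) method \cite{ball}, for which the energy \emph{equality} \eqref{energy-equality} established in Theorem \ref{theo-global}-{\bf(II)} is the decisive ingredient. First I would reduce the claim to a statement about trajectories: by the ultimate dissipativity of $(\mathcal{H},S_t)$ (existence of a bounded absorbing set), it suffices to show that for every bounded positively invariant set $B$, every sequence $U_0^n\in B$, and every $t_n\to+\infty$, the sequence $S_{t_n}U_0^n$ is precompact in $\mathcal{H}$. Writing $U_n(t):=S_{t_n+t}U_0^n$ for $t\ge -t_n$, the a priori bound \eqref{bounded-1} yields, on each window $[-T,0]$, uniform bounds for $U_n$ in $L^\infty(-T,0;\mathcal{H})$, for $u^n_t$ in $L^6$, for $g(u^n_t)$ in $L^{6/5}$, and for $u^n_{tt}$ in $L^\infty(-T,0;H^{-1})$. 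A diagonal extraction over $T\to\infty$ then produces a limit complete trajectory $U(\cdot)$ with $U_n\rightharpoonup U$ (weak-$\ast$ pointwise in $\mathcal{H}$), $u^n\to u$ strongly in $C_{loc}(\mathbb{R};H^{s})$ for $s<1$ and a.e., while $u^n_t\rightharpoonup u_t$ in $L^6_{loc}$ and $g(u^n_t)\rightharpoonup\chi$ in $L^{6/5}_{loc}$; the identification $\chi=g(u_t)$ proceeds exactly as in the construction of weak solutions, via maximal monotonicity of $g$ and \cite[Lemma 2.3]{Barbu}, so that $U$ is itself a weak solution.

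The heart of the argument is to upgrade the weak convergence $U_n(0)\rightharpoonup U(0)$ to strong convergence in $\mathcal{H}$, equivalently $\|U_n(0)\|_{\mathcal{H}}\to\|U(0)\|_{\mathcal{H}}$. Two lower-semicontinuity facts are available. The energy $E$ of \eqref{functional-energy} is weakly lower semicontinuous on $\mathcal{H}$: the quadratic part is, the forcing term $\int_\Omega h u\,dx$ is weakly continuous by compactness of $H^1\hookrightarrow L^2$, and for the potential energy the dissipativity bound \eqref{hyp_f2} furnishes the quadratic minorant $F(u^n)\ge -C_\nu-\tfrac{\nu}{2}|u^n|^2$, whose integral converges strongly, so Fatou gives $\int_\Omega F(u)\,dx\le\liminf_n\int_\Omega F(u^n)\,dx$. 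Likewise, monotonicity of $g$ yields weak lower semicontinuity of the dissipation functional, $\int\!\!\int g(u_t)u_t\le\liminf_n\int\!\!\int g(u^n_t)u^n_t$. The key observation is that if one can prove $E(U_n(0))\to E(U(0))$, strong convergence follows automatically: writing $\tfrac12\|U_n(0)\|^2_{\mathcal{H}}=E(U_n(0))-\int_\Omega F(u^n(0))\,dx+\int_\Omega h u^n(0)\,dx$ and using the weak lower semicontinuity of both $\tfrac12\|\cdot\|^2_{\mathcal{H}}$ and $\int_\Omega F(\cdot)$ together with the convergence of the forcing term, a short $\liminf/\limsup$ comparison forces $\|U_n(0)\|_{\mathcal{H}}\to\|U(0)\|_{\mathcal{H}}$.

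Thus everything reduces to the energy balance $\limsup_n E(U_n(0))\le E(U(0))$, and this is the step I expect to be the main obstacle, precisely because double criticality destroys compactness of both $\int_\Omega F(u^n)$ and $g(u^n_t)$. Here the energy \emph{equalities} $E(U_n(0))=E(U_n(-T))-\int_{-T}^0\!\!\int g(u^n_t)u^n_t$ and $E(U(0))=E(U(-T))-\int_{-T}^0\!\!\int g(u_t)u_t$ are combined with the two lower-semicontinuity properties above. Passing to Helly-type pointwise limits of the non-increasing functions $t\mapsto E(U_n(t))$ shows that $t\mapsto\lim_n E(U_n(t))-E(U(t))$ is non-negative and non-increasing; to pin it to zero I would invoke the enhanced, strongly monotone dissipation \eqref{hyp_g'}, which through Minty's argument promotes the weak $L^6$ convergence of the velocities to strong convergence once the dissipation integrals are shown to converge, together with the finiteness of the total dissipation along the bounded complete trajectory. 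It is exactly at this junction that the energy equality (as opposed to a mere inequality) and the monotone structure of the damping are indispensable, and no coercivity of the source is required.
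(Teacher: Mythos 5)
Your setup---extraction of a limit complete trajectory by a diagonal argument, identification of the weak limit $\chi=g(u_t)$ via maximal monotonicity, and the reduction of strong $\mathcal{H}$-convergence at $t=0$ to the scalar convergence $E(U_n(0))\to E(U(0))$---is sound and parallels the paper. The genuine gap is in the final step, the one you yourself flag as the main obstacle: your mechanism for proving $\limsup_n E(U_n(0))\le E(U(0))$ does not close. Write $\delta(t):=\lim_n E(U_n(t))-E(U(t))\ge 0$ for the (Helly) energy defect. The unweighted energy identity \eqref{energy-equality}, combined with weak lower semicontinuity of the dissipation integral (which you correctly get from monotonicity of $g$ once $\chi=g(u_t)$ is identified), yields only
\begin{equation*}
\delta(0)\;\le\;\delta(-T)\qquad\text{for every } T>0,
\end{equation*}
and nothing forces $\delta(-T)\to 0$ as $T\to\infty$: $\delta$ is non-negative, non-increasing and bounded, so it has a limit at $-\infty$ which may perfectly well be strictly positive---a fixed amount of energy can be lost in the weak limit at \emph{every} time, which is exactly what the noncompactness of $\tfrac12\|\cdot\|^2_{\mathcal{H}}+\int_\Omega F(\cdot)\,dx$ permits. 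Moreover, the appeal to Minty/strong monotonicity at this point is circular: by \eqref{hyp_g'} the strong $L^6$ convergence of the velocities is essentially \emph{equivalent} to the vanishing of the dissipation defect, which is the quantity you are trying to control; and even if the dissipation defect vanished on every window $[-T,0]$, that would only make $\delta$ \emph{constant}, not zero.

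This is precisely where the paper departs from the naive energy comparison, and it is the whole point of Ball's device. The paper works with the perturbed functional $V(U)=E(U)+\epsilon\int_\Omega u_tu\,dx$, which along weak solutions satisfies $\tfrac{d}{dt}V+\epsilon V=H(U)$ and hence the integrated identity \eqref{V}, $V(U(T))=e^{-\epsilon T}V(U(0))+\int_0^Te^{-\epsilon(T-t)}H(U(t))\,dt$. The factor $e^{-\epsilon T}$ multiplying the initial-time term is what replaces your missing step: applied on the windows $[t_j-T,t_j]$, it annihilates, as $T\to\infty$, the uniformly bounded (but possibly non-vanishing) contribution of the data at time $-T$, so one never needs the defect at $-\infty$ to vanish. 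What remains is a one-sided comparison of $\int_0^Te^{-\epsilon(T-t)}H(U^j)\,dt$ with the same quantity for the limit trajectory, and there the paper exploits sign control: by the dissipativity condition \eqref{hyp_f2} the noncompact part of $M$ is nonpositive, and by the coercivity in \eqref{hyp_g'} the damping part $-L$ becomes nonnegative after adding a constant, so Fatou/weak lower semicontinuity applies to each, while the remaining (compact) terms pass to the limit. To complete your argument you must incorporate this exponential weighting (or an equivalent device); with only the unweighted identity \eqref{energy-equality}, the comparison $\delta(0)\le\delta(-T)$ is the best one can extract, and it is not enough.
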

\begin{proof}	
From Theorem \ref{theo-global}-${\bf(IV)}$ every weak solution $U(t) =(u(t),u_t(t))$ of Eq. \eqref{P} satisfies the energy {\bf equality} \eqref{energy-equality}.
%\begin{equation}\label{Equality_E}
%E(U(t) ) + \int_0^t  \int_{\Omega} g(u_t)u_t  dx ds = E(U(0)),
%\end{equation}
%where
%$$E(U) = \frac{1}{2}||U||^2_{\cH} +\int_{\Omega}F(u)dx-\int_{\Omega} hudx.$$
Consider the standard  perturbed Lyapunov's function $V$ with a perturbation $\epsilon$ given by
$$ V(U) := E(U)  +\epsilon \int_{\Omega}u_tudx.$$
Since identity \eqref{energy-equality} is valid for weak solutions of problem \eqref{P} and perturbation is given by compact term, 
%deriving equation above with respect to $t$, we obtain
%\begin{eqnarray*}
%\frac{d}{dt}V(U(t))=\frac{d}{dt}E(U(t))+\epsilon \|u_t(t)\|^2+\int_{\Omega}u_{tt}udx.
%\end{eqnarray*}
standard argument [using the equation] leads to the energy {\it identity} satisfied by each weak solution in the the perturbed functional. 
%$u_{tt}=\Delta u-g(u_t)-f(u)+h$
%in the above expression and integrating from $0$ to $T$, it holds that
\begin{equation}\label{V}
V(U(T)) =e^{-\epsilon T} V(U(0) )   +\int_0^T e^{-\epsilon(T-t)} H(U(t))dt,
\end{equation}
where $H(U)=L(U)+M(U)$ with
\begin{eqnarray*}
L(U)&:=& -\int_{\Omega}g(u_t)u_tdx+\frac{3\epsilon}{2} ||u_t||^2,\\
M (U) &:=& -\epsilon \left[ \int_{\Omega}f(u)udx-\int_{\Omega}F(u)dx \right]   -\frac{\epsilon}{2}\|\nabla u\|^2-\epsilon \int_{\Omega}g(u_t)udx+\epsilon^2 \int_{\omega}u_tudx.
\end{eqnarray*}
Our goal is to show that a semiflow  $\mathcal{G}$- family of maps $\varphi:[0,\infty) \rightarrow \cH $ [weak solutions] is asymptotically compact. We shall follow closely  [including the notation] the method presented in \cite{ball} applied to linear dissipation. We take a sequence $\varphi_j \in \mathcal{G}$ with $\varphi_j(0)$ bounded, and let $t_j \rightarrow  \infty$. By (\ref{energy-equality})
$E(\varphi_j(t_j ))$ is bounded which also implies $\varphi_j(t_j) $ is bounded in $\cH$ .
This allows to select weakly convergent subsequence
\begin{equation}\label{lambda}
\varphi_j (t_j) \rightharpoonup \lambda, \quad\lambda \in \cH
\end{equation}
and by continuity of the semiflow, there exists $\lambda_T \in \cH $ such that 
\begin{align*}
\varphi_j (t_j-T ) \rightharpoonup \lambda _T, \quad \lambda_T \in \cH.
\end{align*}
Using \cite[(C4w)]{ball} we can still assume that there exists a trajectory $\widetilde{\varphi}(t)=(u(t),u_t(t))\in \mathcal{G}$ with
$$ U^j(t) \equiv \varphi_j(t_j + t-T )  \rightharpoonup \widetilde{\varphi}(t)=(u(t),u_t(t))=U(t) \in \mathcal{G},$$
where $\widetilde{\varphi}(0) = \lambda_T$ and $\widetilde{\varphi}(T) =\lambda.$
Applying the perturbed energy equality \eqref{V} to the above solutions by substituting $U^j(t)\equiv \varphi_j(t_j + t -T)$ we obtain
\begin{align*}
V(\varphi_j(t_j))=e^{-\epsilon T} V(\varphi_j(t_j-T)+\int_0^T e^{-\epsilon(T-t)} H(U^j(t)) dt
\end{align*}
and the  {\it equality} for limiting solutions [using again energy equality  in (\ref{V})] 
\begin{align*}
V (\lambda)= V (\widetilde{\varphi}(T))  =e^{-\epsilon T} V(\lambda_T )   +\int_0^T e^{-\epsilon(T-t)} H(U(t) )
dt.
\end{align*}

Our goal is to show that
\begin{equation}\label{goal}
\lim \sup_{j\rightarrow \infty}  V(\varphi_j(t_j) ) \leq  V(\lambda)  \leq \lim \inf_{j\rightarrow \infty } V (\varphi_j(t_j)).
\end{equation}

The second inequality follows from weak lower semicontinuity of $V(\varphi)$ and weak convergence in (\ref{lambda}). Also, in the case when $H$ is compact, the inequality in (\ref{goal}) follows quickly by taking $T$ to infinity. In non-compact case, more work is needed as noted in \cite{ball}. The key observation is the control of \enquote{sign} which allows to use version of Fatou's lemma.
We shall use the elementary relation:
%\begin{align*}
$\lim \sup ( F) = - \lim \inf (-F).$
%\end{align*}
The main task is to pass with the  weak limit under the integral sign. For transparency, we shall  focus on critical-noncompact terms, where passage with the weak limit requires additional [to weak convergence] properties.
In short, the last two terms in $M(U) $ owning to compact embedding $H^1(\Omega) \subset L^5(\Omega)$ satisfy
$$\lim_j \left[\epsilon \int_{\Omega}g(u_t^j)u^jdx +\epsilon^2 \int_{\Omega}u_t^{j}u^jdx \right]  =\epsilon \int_{\Omega}g(u_t)udx +\epsilon^2 \int_{\Omega}u_tudx.$$

Since   $\displaystyle{-\epsilon \left[ \int_{\Omega}f(u^j)u^jdx-\int_{\Omega}F(u^j)dx\right]-\frac{\epsilon}{2} \|\nabla u^j\|^2}$ is negative, weak lower semicontinuity implies
\begin{eqnarray*}
&&\int_0^T e^{-\epsilon(T-t)} \left[ \epsilon  \int_{\Omega}(f(u)u-F(u))dx  +\frac{\epsilon}{2} \|\nabla u(t)\|^2 \right]dt\\
&&\leq \lim \inf _j  \int_0^T e^{-\epsilon (T-t) } \left[ \epsilon  \int_{\Omega}(f(u^j)u^j-F(u^j))dx  +\frac{\epsilon}{2} \|\nabla u^j(t)\|^2 \right]dt\\
&& =- \lim \sup_{j}  \int_0^T e^{-\epsilon (T-t) }\left[ -\epsilon  \int_{\Omega}(f(u^j)u^j-F(u^j))dx-\frac{\epsilon}{2} \|\nabla u^j(t)\|^2 \right]dt.
\end{eqnarray*}
This gives
\begin{eqnarray*}-\int_0^T e^{-\epsilon (T-t) }M(U)dt &\leq& \lim \inf_{j}  \int_0^T e^{-\epsilon (T-t)} (-M(U^j) )dt
= -  \lim \sup_{j} \int_0^T e^{-\epsilon (T-t) }M (U^j)dt.
\end{eqnarray*}
Hence
\begin{equation}\label{M}
\lim \sup_{j}\int_0^T e^{-\epsilon (T-t) }M(U^j)dt \leq  \int_0^T e^{-\epsilon (T-t)}  M(U) dt.
\end{equation}
We shall next estimate the damping terms: For any small constant $\eta > 0$
$$-L(U) =\int_{\Omega}g(u_t)u_tdx-\frac{3}{2}\epsilon||u_t(t)||^2  \geq  \int_{\Omega}[ \,|u_t|^6 - \eta  |u_t|^6  -\epsilon  C_{\eta } \,]dx.$$
Taking $\eta < 1 $ gives $\displaystyle{\left[\,\int_{\Omega}g(u_t^j)u_t^jdx - \frac{3\epsilon}{2}||u_t^j||^2 + \epsilon C_{\eta } \,\right] \geq 0}$ and via  weak lowersemicontinuity

$$-\int_0^T e^{-\epsilon (T-t) }L(U)dt +\epsilon C_{\eta }   \leq \lim \inf_{j} \int_0^T e^{-\epsilon (T-t) }[-L(U^j)  +\epsilon C_{\eta}]dt$$
and converting into lim sup
\begin{equation}\label{L}\int_0^T e^{-\epsilon (T-t) }L(U)dt \geq \lim \sup_{j} \int_0^T e^{-\epsilon (T-t)}L(U^j)dt.\end{equation}
Combining \eqref{M} and \eqref{L}, we get
$$\lim \sup_j \int_0^T e^{-\epsilon (T-t) }\left[\,L(U^j)+M(U^j)\,\right]dt \leq  \int_0^T e^{-\epsilon (T-t) }\left[\,L(U)+M(U)\,\right]dt.$$
This yields

$$ \lim \sup_j \int_0^T e^{-\epsilon (T-t)}H (U^j)dt \leq
\int_0^Te^{-\epsilon (T-t)}H(U)dt.$$

By taking $T\rightarrow \infty$ and using boundedness of trajectories in $\mathcal{G}$ one obtains the conclusion in (\ref{goal}). The final result follows from the fact that weak convergence and norm convergence imply strong convergence.
\end{proof}

\subsection{Gradient System}
\begin{proposition}\label{Prop-gradient-system} Assume that the assumptions of Theorem \ref{theo-global} hold. Then, %$\Phi:\mathcal{H}\to \mathbb{R}$ given by
%$$\Phi(U(t)):=\frac{1}{2}||U(t)||^2_{\mathcal{H}}+\int_{\Omega}F(u)\,dx-\int_{\Omega}hudx$$
%is a strict Lyapunov functional for the dynamical system $(\mathcal{H},S_t)$. Consequently, 
$(\mathcal{H},S_t)$ is a gradient dynamical system.
\end{proposition}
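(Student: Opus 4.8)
The plan is to exhibit the energy functional $E$ from \eqref{functional-energy} as a strict Lyapunov function for $(\mathcal{H},S_t)$, which by the abstract characterization (see \cite[Definition 2.28]{chueshov-white}) is exactly what it means for the system to be gradient. Concretely, I set $\Phi(U):=E(U)$ and verify three properties: (i) $\Phi$ is continuous on $\mathcal{H}$; (ii) $t\mapsto \Phi(S_tU_0)$ is non-increasing for every $U_0\in\mathcal{H}$; and (iii) if $\Phi(S_tU_0)=\Phi(U_0)$ for all $t>0$, then $U_0$ is a stationary point, i.e. $S_tU_0=U_0$ for all $t\ge0$, so that $U_0\in\mathcal{N}$.

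Property (ii) is immediate from the energy equality \eqref{energy-equality} established in Theorem \ref{theo-global}-{\bf(II)}: for $t\ge s$,
$$E(U(t))=E(U(s))-\int_s^t\int_{\Omega}g(u_t)u_t\,dx\,d\tau,$$
and since $g$ is monotone increasing with $g(0)=0$ we have $g(u_t)u_t\ge 0$ pointwise, so the dissipation integral is non-negative and $\Phi$ decreases along trajectories. For continuity (i), I note that $\tfrac12\|\cdot\|^2_{\mathcal{H}}$ and $U\mapsto\int_\Omega hu\,dx$ are plainly continuous on $\mathcal{H}$, while $u\mapsto\int_\Omega F(u)\,dx$ is continuous on $H^1$: the growth bound $|F(s)|\le C(1+|s|^6)$, which follows from $|f(s)|\le M|s|^5$, together with the Sobolev embedding $H^1\hookrightarrow L^6$ places the superposition map $u\mapsto F(u)$ into $L^1(\Omega)$ continuously, by the standard continuity of Nemytskii operators between Lebesgue spaces.

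The crux is property (iii). Suppose $\Phi(S_tU_0)$ is constant on $t\ge0$. Then the energy equality forces
$$\int_0^t\int_{\Omega}g(u_t)u_t\,dx\,d\tau=0\qquad\text{for all }t>0,$$
and since the integrand is non-negative it must vanish almost everywhere on $\Omega\times(0,\infty)$. Here the strong coercivity in \eqref{hyp_g'} is decisive: from $g'(s)\ge\kappa_0|s|^4$ and $g(0)=0$ one integrates to obtain $g(s)s\ge\tfrac{\kappa_0}{5}|s|^6$, so $g(u_t)u_t=0$ a.e. forces $u_t=0$ a.e. Consequently $u$ is independent of $t$, the orbit reduces to the single point $U_0=(u_0,0)$, and $u_0$ solves the stationary equation $-\Delta u_0+f(u_0)=h$; that is, $U_0\in\mathcal{N}$ and $S_tU_0=U_0$. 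This confirms the strict Lyapunov property.

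I expect the only genuinely delicate point to be the continuity of the potential term $\int_\Omega F(u)\,dx$ at the critical exponent, where compactness of the embedding $H^1\hookrightarrow L^6$ fails; this is handled not by compactness but by the continuity of the Nemytskii operator on $L^6$. It is worth emphasizing that property (iii) uses the coercivity $g'(s)\ge\kappa_0|s|^4$ but not any non-degeneracy at the origin, so the gradient structure persists even in the degenerate case $g'(0)=0$. With (i)--(iii) in hand, the abstract definition yields that $(\mathcal{H},S_t)$ is gradient.
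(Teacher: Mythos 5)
Your proof is correct and follows essentially the same route as the paper: take $\Phi=E$, use the energy \emph{equality} from Theorem \ref{theo-global}-{\bf(II)} to get monotonicity, and use the coercivity $g'(s)\ge\kappa_0|s|^4$ (integrated to $g(s)s\ge\tfrac{\kappa_0}{5}|s|^6$) to force $u_t\equiv0$ when the energy is constant, hence $U_0\in\mathcal{N}$. The only cosmetic differences are that you additionally verify continuity of $\Phi$ via the Nemytskii operator (which the paper leaves implicit) while the paper is slightly more explicit in upgrading $\|u_t(t)\|=0$ from almost every $t$ to every $t$ via $u_t\in C([0,T];H^0)$; neither difference is substantive.
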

\begin{proof} Let us take $\Phi$ as the energy functional $E$ defined in \eqref{functional-energy}. From energy equality \eqref{energy-equality}, we have
\begin{align}\label{inequality grad2} 	\Phi(S_tU_0)
+\int_0^t\int_{\Omega}g(u_t)u_tdxd\tau =  \Phi(U_0),
\end{align}
for every $U_0\in \mathcal{H}$. Hence $\Phi(S_tU_0)$ is non increasing. Now let us suppose $\Phi(S_tU_0)=\Phi(U_0)$ for all $t\ge 0$. Then from above
From Assumption \eqref{hyp_g'}, we have
\begin{align}\label{est-grad-2}
0\le \kappa_0\int_0^t\int_{\Omega}|u_t|^{6}dxd\tau\le\int_0^t\int_{\Omega}g(u_t)u_tdxd\tau=0.
\end{align}
Then, using embedding $L^6(\Omega)\hookrightarrow H^0$, it follows from \eqref{est-grad-2} that
$$
\|u_t(t)\|=0\ \ \hbox{for almost everywhere}\ \ t>0.
$$
Moreover, it follows from $u_t\in C([0, T];H^0)$ for all $T>0$ that $\|u_t(t)\|=0$ for all $t>0$, which implies that $U_0\in \mathcal{N}$, where $\mathcal{N}$ is the set of stationary points of the dynamical system $(\mathcal{H},S_t)$. Using that
$$
\quad U_0\in \mathcal{N} \quad  \Leftrightarrow \quad S_t(U_0)=U_0,   \quad  t>0,
$$
then   $\Phi$ is a {\it  strict Lyapunov } functional for the dynamical system $(\mathcal{H},S_t)$.
\end{proof}
\subsection{Boundedness of the set of stationary solutions}
\begin{proposition}\label{Prop0}
Let $\Phi$ be the Lyapunov function given in Proposition \ref{Prop-gradient-system}. Then $\Phi(U)$ is bounded from above on any bounded subset of $\mathcal{H}$ and the set $\Phi_R=\{U:\Phi(U)\le R\}$ is bounded for every $R$.
\end{proposition}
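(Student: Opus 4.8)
The plan is to read off both assertions directly from the explicit form \eqref{functional-energy} of $\Phi=E$, namely $\Phi(U)=\tfrac12\|U\|_{\mathcal{H}}^2+\int_\Omega F(u)\,dx-\int_\Omega hu\,dx$ for $U=(u,v)\in\mathcal{H}$, by sandwiching the potential term $\int_\Omega F(u)\,dx$ between the lower bound supplied in \eqref{hyp_f2} and the growth bound \eqref{hyp_f'}. The only analytic inputs are the Sobolev embedding $H^1\hookrightarrow L^6(\Omega)$ (to control the sextic growth of $F$) together with the Poincar\'e inequality $\lambda_1\|u\|^2\le\|\nabla u\|^2$; no compactness is required, since $\Phi$ need only be estimated on a fixed bounded set or on a fixed sublevel set.

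First, for the upper bound on bounded subsets, I would integrate the pointwise estimate coming from \eqref{hyp_f'}. Since $f(0)=0$, integrating $|f'(s)|\le C_f(1+|s|^4)$ once gives $|f(s)|\le C(|s|+|s|^5)$, and a second integration yields $|F(s)|\le C(|s|^2+|s|^6)$; hence $\int_\Omega F(u)\,dx\le C(\|u\|^2+\|u\|_6^6)$. Using $H^1\hookrightarrow L^6(\Omega)$ and Poincar\'e I would bound the right-hand side by $C(\|\nabla u\|^2+\|\nabla u\|^6)$, and the forcing term by $|\int_\Omega hu\,dx|\le\|h\|\,\|u\|\le C\|h\|\,\|\nabla u\|$. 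Adding $\tfrac12\|U\|_{\mathcal{H}}^2$ then shows $\Phi(U)\le C(1+\|U\|_{\mathcal{H}}^6)$, so $\Phi$ is bounded above whenever $\|U\|_{\mathcal{H}}\le R$.

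Second --- and this is the step carrying the real content --- for boundedness of the sublevel sets I would establish the coercive lower bound $\Phi(U)\ge c_0\|U\|_{\mathcal{H}}^2-C$. The lower estimate in \eqref{hyp_f2} gives $\int_\Omega F(u)\,dx\ge -C_\nu|\Omega|-\tfrac{\nu}{2}\|u\|^2$, and Poincar\'e converts the bad term into $-\tfrac{\nu}{2\lambda_1}\|\nabla u\|^2$. Combining this with the $\tfrac12\|\nabla u\|^2$ already present produces the gradient coefficient $\tfrac12\bigl(1-\tfrac{\nu}{\lambda_1}\bigr)=\tfrac{\omega}{2}$ with $\omega>0$ as in \eqref{def-omega}; this is precisely where the dissipativity hypothesis $\nu<\lambda_1$ is indispensable. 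After absorbing the forcing term via Young's inequality, $|\int_\Omega hu\,dx|\le\tfrac{\omega}{4}\|\nabla u\|^2+C\|h\|^2$, I obtain $\Phi(U)\ge\tfrac{\omega}{4}\|\nabla u\|^2+\tfrac12\|v\|^2-C_\nu|\Omega|-C\|h\|^2\ge c_0\|U\|_{\mathcal{H}}^2-C$ with $c_0=\min\{\omega/4,\,1/2\}$. Consequently $\Phi(U)\le R$ forces $\|U\|_{\mathcal{H}}^2\le(R+C)/c_0$, which proves $\Phi_R$ bounded.

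The main (and essentially the only) obstacle is to guarantee that the coefficient in front of $\|\nabla u\|^2$ stays strictly positive after the potential term has been absorbed: this hinges on the strict inequality $\nu<\lambda_1$ encoded in \eqref{hyp-inf-f}--\eqref{def-omega}, and the argument would collapse at the borderline $\nu=\lambda_1$. Everything else is routine once $H^1\hookrightarrow L^6(\Omega)$ and Poincar\'e are invoked; in particular the critical sextic term $\|u\|_6^6$ enters only the upper bound, where on a fixed bounded set it is harmless and no coercivity from it is needed.
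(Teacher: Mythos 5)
Your proof is correct and takes essentially the same route as the paper: the coercive lower bound $\Phi(U)\ge \frac{\omega}{4}\|U\|^2_{\mathcal{H}}-C_{\nu}|\Omega|-\frac{1}{\omega\lambda_1}\|h\|^2$, obtained from \eqref{hyp_f2} together with Poincar\'e and Young's inequalities, is exactly the estimate the paper uses, and boundedness of $\Phi_R$ follows from it identically. The only difference is that you also write out the routine upper bound on bounded sets via \eqref{hyp_f'} and the embedding $H^1\hookrightarrow L^6(\Omega)$, a step the paper's proof leaves implicit.
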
	
\begin{proof}
Let $R>0$ and we define $\Phi_R=\{U\in \mathcal{H}|\Phi(U)\le R\}.$
From Assumption \eqref{hyp_f2} and H\"older’s inequality, it follows from \eqref{inequality grad2} that
\begin{align*}
\frac{\omega}{4}||U||^2_{\mathcal{H}}\le \Phi(U)+C_{\nu}|\Omega|+\frac{1}{\omega\lambda_1}\|h\|^2\le R+C_{\nu}|\Omega|+\frac{1}{\omega\lambda_1}\|h\|^2.
\end{align*}
Therefore, for $U(t)\in \Phi_R$, we get
\begin{align*}
||U||^2_{\mathcal{H}}\le \frac{4}{\omega}\left[R+C_{\nu}|\Omega|+\frac{1}{\omega\lambda_1}\|h\|^2\right].
\end{align*}
Which implies that $\Phi_R$ is bounded for every $R>0$.
\end{proof}
\begin{proposition}
\label{Prop-stationary solutions}
The set $\mathcal{N}$ of stationary solutions of problem \eqref{P} is bounded in $\mathcal{H}.$
\end{proposition}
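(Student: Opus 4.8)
The plan is to first identify $\mathcal{N}$ explicitly and then extract a single a priori bound valid for every element of it. An element $U\in\mathcal{N}$ is a stationary solution, i.e. $S_tU=U$ for all $t\ge 0$, so $u(t)\equiv u_0$ and $u_t\equiv 0$. Substituting into \eqref{P} and using $g(0)=0$, the function $u\in H^1$ solves, in the weak sense, the elliptic problem $-\Delta u+f(u)=h$ in $\Omega$ with $u|_\Gamma=0$; concretely
\begin{align*}
\int_\Omega\big(\nabla u\cdot\nabla\phi+f(u)\phi\big)\,dx=\int_\Omega h\phi\,dx,\qquad \forall\,\phi\in H^1.
\end{align*}
The key preliminary observation is that the multiplier $\phi=u$ is admissible even in the critical regime: since $u\in H^1\hookrightarrow L^6(\Omega)$ and, by \eqref{hyp_f'}, $f(u)\in L^{6/5}(\Omega)=[L^6(\Omega)]'$, the pairing $\int_\Omega f(u)u\,dx$ is finite. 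This is the only place where criticality must be checked, and no compactness is required.

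Next I would test with $\phi=u$ to obtain
\begin{align*}
\|\nabla u\|^2+\int_\Omega f(u)u\,dx=\int_\Omega hu\,dx.
\end{align*}
To control the source term I would use the dissipativity condition \eqref{hyp-inf-f} in its integrated form \eqref{hyp_f2}: chaining the two bounds there yields the pointwise inequality $f(s)s\ge -\nu|s|^2-C_\nu$ for all $s\in\mathbb{R}$, whence $\int_\Omega f(u)u\,dx\ge -\nu\|u\|^2-C_\nu|\Omega|$. Substituting this and invoking the Poincar\'e inequality $\|u\|^2\le\lambda_1^{-1}\|\nabla u\|^2$ together with the definition \eqref{def-omega} of $\omega=1-\nu/\lambda_1$ gives
\begin{align*}
\omega\|\nabla u\|^2\le\int_\Omega hu\,dx+C_\nu|\Omega|.
\end{align*}

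Finally I would estimate the forcing term by $\int_\Omega hu\,dx\le\|h\|\,\|u\|\le \lambda_1^{-1/2}\|h\|\,\|\nabla u\|\le \frac{\omega}{2}\|\nabla u\|^2+\frac{1}{2\omega\lambda_1}\|h\|^2$ via Cauchy--Schwarz, Poincar\'e and Young's inequality. Absorbing the quadratic term into the left-hand side leaves
\begin{align*}
\frac{\omega}{2}\|\nabla u\|^2\le \frac{1}{2\omega\lambda_1}\|h\|^2+C_\nu|\Omega|,
\end{align*}
a bound independent of the particular stationary solution. Since the velocity component of any $U\in\mathcal{N}$ vanishes, $\|U\|_{\mathcal{H}}^2=\|\nabla u\|^2$, so the displayed estimate shows $\mathcal{N}$ is bounded in $\mathcal{H}$. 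The argument is essentially routine; the only subtle point, as noted, is the legitimacy of using $u$ itself as a test function under the critical growth of $f$, which is secured by the $L^6$--$L^{6/5}$ duality. The dissipativity assumption \eqref{hyp-inf-f} supplies exactly the coercivity needed to close the estimate.
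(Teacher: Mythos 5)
Your proposal is correct and follows essentially the same route as the paper: multiply the stationary equation $-\Delta u+f(u)=h$ by $u$, bound $\int_\Omega f(u)u\,dx$ from below via \eqref{hyp_f2}, and close the estimate with Poincar\'e and Young's inequalities, arriving at exactly the paper's bound $\|\nabla u\|^2\le \frac{1}{\omega^2\lambda_1}\|h\|^2+\frac{2C_\nu|\Omega|}{\omega}$. Your explicit justification of the test function $\phi=u$ via the $L^6$--$L^{6/5}$ duality is a nice touch that replaces the paper's appeal to elliptic regularity, but it does not change the substance of the argument.
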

\begin{proof}
Let $u$ be a stationary solution of \eqref{P}. Then from the definition of $\mathcal{N}$, we have
\begin{eqnarray}\label{est-1}
-\Delta u+f(u)=h,\quad u\in H^1,
\end{eqnarray}
which by elliptic regularity, $u\in H^2$.
Multiplying \eqref{est-1} by $u$ and integrating over $\Omega$, we get
\begin{eqnarray}\label{est-22}
\|\nabla u\|^2 +\int_{\Omega}f(u)udx=\int_{\Omega}hudx.
\end{eqnarray}
From Assumption \eqref{hyp_f2} and H\"older inequality, it follows from \eqref{est-22} that 
%\begin{eqnarray}\label{est-3}
%	\int_{\Omega}f(u)udx\ge -\frac{\nu}{2}\|u\|^2-C_{\nu}|\Omega|\ge -\frac{\nu}{\lambda_1}\|\nabla u\|^2-C_{\nu}|\Omega|.
%\end{eqnarray}
%From H\"older inequality, embedding $H^1\hookrightarrow H^0$, and Young's inequality, we have
%\begin{eqnarray}\label{est-4}
%\left|\int_{\Omega}hudx\right|\le\|h\|\|u\|\le \|h\|\frac{1}{\lambda_1^{1/2}}\|\nabla u\|\le \frac{1}{2\omega\lambda_1}\|h\|^2+\frac{\omega}{2}\|\nabla u\|^2.
%\end{eqnarray}
%Using that $\omega=1-\frac{\nu}{\lambda_1}$ and combining \eqref{est-22}, \eqref{est-3}, and \eqref{est-4}, we obtain that
\begin{eqnarray}\label{est-5}
\|\nabla u\|^2\le \frac{1}{\omega^2\lambda_1}\|h\|^2+\frac{2C_{\nu}|\Omega|}{\omega}.
\end{eqnarray}
\end{proof}

%\begin{remark}
%Note that, in particular, if $C_{\nu}=0$ and $h\equiv 0$, then the set $\mathcal{N}$ is given by
%$\mathcal{N}=\{(0,0)\}.$
%\end{remark}

\section{Proof of Theorem \ref{theo-quasi}: Quasi-stability property}
The existence of a compact global attractor $\mathfrak{A}$ is guaranteed by Theorem \ref{theo_main1} under weaker assumption $g'(0) \geq 0 $. However, a strict inequality 
$g'(0) >0 $ allows to show that the dynamical system $(\mathcal{H},S_t)$ is quasi-stable as defined in \cite[Definition 7.9.2]{chueshov-yellow}. And within the context of the theory of dynamical systems associated with the study of attractors, for quasi-stable systems, it is possible to obtain important properties such as the finite dimension and smoothness of the attractor, as well as the existence of exponential attractors. Properties that will be established for the dynamical system associated with problem \eqref{P} in Theorem \ref{theo_main2}.

The proof that the system $(\mathcal{H},S_t)$ is quasi-stable follows from the fact that $(\mathcal{H},S_t)$ satisfies conditions (7.9.2) and (7.9.3) from \cite[Definition 7.9.2]{chueshov-yellow}.  The condition (7.9.2) is guaranteed directly from the Lipschitz property \eqref{Lipschitz-inequality}. It remains to prove the quasi-stability inequality (7.9.3) which is ensured by the relation \eqref{inequality-main} established in the following proposition.
\subsection{Quasi-stability Estimate}
\begin{proposition}\label{stabilizability-estimate} Let $(u^1(t),u^1_t(t))=S_t(u^1_0,u^1_1)$ and $ (u^2(t), u^2_t(t))= S_t(u^2_0,u^2_t)$ be two weak solutions to \eqref{P} with initial data $U^1_0=(u^1_0,u^1_1), U^2_0=(u^2_0,u^2_1)$ lying in a bounded set $B\subset \mathcal{H}$. Let the hypotheses of Theorem \ref{theo-global} be valid. In addition we assume that $g'(0)>0$.  Let $z(t)=u^1(t)-u^2(t).$ Then we have the following relation
\begin{eqnarray}\label{inequality-main}
||S_tU^1_0-S_tU^2_0||^2_{\mathcal{H}}\le b(t)||U^1_0-U^2_0||^2_{\mathcal{H}}+c(t)\sup_{s\in [0,t]}\|u^1(s)-u^2(s)\|^2,
\end{eqnarray}
where $b(t)$ and $c (t)$ are nonnegative scalar functions satisfying  $b\in L^1(\mathbb{R}^+)$ with  $\displaystyle\lim_{t\rightarrow +\infty}b(t)=0$ and $c(t)$ is
%locally
bounded on $[0,\infty]$.
\end{proposition}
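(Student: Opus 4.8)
The plan is to derive \eqref{inequality-main} from two differential identities for the difference $z=u^1-u^2$: the energy identity obtained by testing the difference equation with $z_t$, and the \emph{equipartition} identity obtained by testing with $z$. These are assembled into a perturbed Lyapunov functional whose dissipation is made \emph{uniformly coercive} precisely by the hypothesis $g'(0)>0$ --- the ingredient absent in Theorem~\ref{theo_main1} --- which is what allows extraction of a genuine decay rate $b(t)\to 0$ rather than mere boundedness.

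First I would record the coercivity gain. Since $g\in C^1$ with $g'(0)>0$ and $g'(s)\ge\kappa_0|s|^4$ by \eqref{hyp_g'}, continuity yields a constant $m_0>0$ with $g'(s)\ge m_0$ for $|s|$ small and $g'(s)\ge\kappa_0|s|^4$ otherwise, whence $g'(s)\ge \tfrac{m_0}{2}+\tfrac{\kappa_0}{2}|s|^4$ for all $s$. Testing the difference of the two equations with $z_t$ and using the energy identity \eqref{energy-equality} as in \eqref{est2-1}, then applying the $f'$--rewriting \eqref{est2-3} of the source and the modified energy $E_z$ together with the equivalence \eqref{est2-7}, gives
$$\frac{d}{dt}E_z(t)+\frac{m_0}{2}\|z_t\|^2\le |I_2|+C_\eta\|z\|^2,$$
where $|I_2|\le C_B\big(\|u^1_t\|_6+\|u^2_t\|_6\big)\|\nabla z\|^2$ is estimated exactly as in \eqref{conv-g-x}, the constant depending on the bounded set $B$ only through $H^1\hookrightarrow L^6(\Omega)$.

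Next I would test the difference equation with $z$ to produce the term $\|\nabla z\|^2$, which the damping alone cannot control. This yields $\frac{d}{dt}\int_\Omega z_tz\,dx=\|z_t\|^2-\|\nabla z\|^2-\int_\Omega\big(g(u^1_t)-g(u^2_t)\big)z\,dx-\int_\Omega\int_0^1 f'(\xi_\theta)\,d\theta\,|z|^2dx$. The potential cross term is bounded below by $-K_f\|z\|^2$ via the same $\Omega_1/\Omega_2$ splitting as in \eqref{ff}, so it reinforces the good sign of $\|\nabla z\|^2$; the damping cross term is split by Cauchy--Schwarz against the enhanced weighted dissipation and by Young's inequality into a piece absorbed by $\tfrac{m_0}{2}\|z_t\|^2$, a time-weighted term $C_B\big(\|u^1_t\|_6^4+\|u^2_t\|_6^4\big)\|\nabla z\|^2$, and a lower-order term $C_\eta\|z\|^2$. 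Forming $\Psi:=E_z+\epsilon\int_\Omega z_tz\,dx$, which for small $\epsilon$ is equivalent to $\|U^1-U^2\|^2_{\mathcal{H}}$ modulo $\|z\|^2$, and choosing $\epsilon$ small, the two identities combine to
$$\frac{d}{dt}\Psi(t)+\alpha\,\Psi(t)\le \phi(t)\,\Psi(t)+C_B\|z(t)\|^2,$$
with $\alpha>0$ and $\phi(t):=C_B\big(\|u^1_t\|_6+\|u^2_t\|_6+\|u^1_t\|_6^4+\|u^2_t\|_6^4\big)$.

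The final step is a weighted Gronwall argument, and this is where I expect the main obstacle. Because the nonlinearities are quintic-critical, the weights $\phi$ are non-compact and do not vanish pointwise, so boundedness of trajectories alone cannot force $b(t)\to 0$; moreover $\phi\notin L^1(\mathbb{R}^+)$ in general. The resolution is the global dissipation integrability already contained in the a priori bound \eqref{Estimate_I0}, which passes to the limit and is uniform on $B$, giving $\int_0^\infty\big(\|u^1_t\|_6^6+\|u^2_t\|_6^6\big)\,dt\le C_B$. By Hölder this forces $\int_0^t\phi(s)\,ds\le C_B\,t^{5/6}=o(t)$, so the Gronwall exponent $-\alpha t+\int_0^t\phi\to-\infty$ and the kernel $e^{-\int_s^t(\alpha-\phi)}$ is uniformly bounded and exponentially dominated for $t-s$ large. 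Integrating the differential inequality and returning from $\Psi$ to $\|U^1-U^2\|^2_{\mathcal{H}}$ through \eqref{est2-7} then yields \eqref{inequality-main} with $b(t)\to 0$, $b\in L^1(\mathbb{R}^+)$, and $c(t)$ bounded on $[0,\infty)$. The delicate points I anticipate are the uniform boundedness of the Gronwall kernel in spite of $\phi\notin L^1(\mathbb{R}^+)$, and the careful bookkeeping of the enhanced-dissipation splitting of the damping cross term so that every non-coercive contribution carries a sublinear-integral time weight.
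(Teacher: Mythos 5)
Your proposal is correct in substance, and it reaches \eqref{inequality-main} by a genuinely different assembly of the same core ingredients. You and the paper share the key mechanisms: the uniform coercivity $g'(s)\ge \kappa_2 + c\,|s|^4$ extracted from $g'(0)>0$ together with \eqref{hyp_g'}; the $K_f$-modified energy $E_z$ with the $f'$/$f''$ rewriting \eqref{est2-3} and the equivalence \eqref{est2-7}; the multiplier $z$ to recover $\|\nabla z\|^2$; Cauchy--Schwarz of the critical damping cross term $\int(|u^1_t|^4+|u^2_t|^4)|z_t||z|\,dx$ against the \emph{weighted} part of the dissipation (the paper's \eqref{hyp-g'-22} and estimate \eqref{j4'}); and the uniform-in-time bound $\int_0^\infty\big(\|u^1_t\|_6^6+\|u^2_t\|_6^6\big)dt\le C_B$ coming from the energy identity. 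Where you differ is the closing argument: the paper works entirely with \emph{integrated} identities on $[0,T]$, derives the one-step contraction $E_z(T)\le \tfrac{2C_0}{T+2C_0}E_z(0)+C_B\big[\sup\|z\|^2+\int_0^T d\,E_z\big]$, iterates it on the intervals $[mT,(m+1)T]$ (invoking \cite[Remark 3.30]{chueshov-white}) and only then applies Gronwall, whereas you form the perturbed functional $\Psi=E_z+\epsilon\int_\Omega z_tz\,dx$, obtain a single differential inequality, and close with a weighted Gronwall whose kernel you control through the sublinear growth $\int_s^t\phi\le C_B\big[(t-s)^{5/6}+(t-s)^{1/3}\big]$, so that $e^{-\alpha(t-s)+C\int_s^t\phi}\le C''e^{-\alpha(t-s)/2}$. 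Your route is more streamlined (no interval iteration, no external contraction lemma), while the paper's integrated scheme has the technical merit of never differentiating energies of weak solutions, only using the identity in integral form; in your write-up the differential inequality should be understood a.e.\ or distributionally, which the established energy identity does license. Two small points of bookkeeping: (i) your first display drops the weighted dissipation $\int(|u^1_t|^4+|u^2_t|^4)|z_t|^2dx$ from the left-hand side, yet your second step absorbs the damping cross term into precisely that quantity, so the careful version must retain it (e.g.\ keep $\tfrac{m_0}{2}\|z_t\|^2+\tfrac{\kappa_0}{4}W(z_t)$ on the left); (ii) your kernel argument, while correct, can be bypassed by the scalar Young inequality $x\le\delta+C_\delta x^6$ applied to $x=\|u^i_t\|_6$ and $x^4\le\delta+C_\delta x^6$, which reduces $\phi(t)\le\delta+C_\delta d(t)$ with $d\in L^1(\mathbb{R}^+)$ uniformly --- this is exactly how the paper splits $|J_2|$ into $\delta\int E_z+C_{B,\delta}\int d\,E_z$, and it makes the exponential factor manifestly bounded, as in the paper's final formulas \eqref{definiton-c(t)}.
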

\begin{proof}
We begin with the following simple observation:
Using that $g\in C^1(\mathbb{R})$ and $g'(0)>0$, then there exists a constant $\eta>0$ such that
$g'(s)>0$ for all $|s|<2\eta$. And since $g'>0$ in $\mathbb{R}$, there exists a constant $m>0$ such that
$g'(s)\ge m> 0$ for all $|s|\le \eta$. On the other hand, from Assumption \eqref{hyp_g'}, we have
$g'(s)\ge \kappa_0|s|^{4}\ge \kappa_0\eta^4>0$ for all $|s|\ge \eta$.
Hence, choosing $\kappa_2=\inf\{m,\kappa_0\eta^4\}$, we infer
\begin{align}\label{hyp-g'-2}g'(s)\ge \kappa_2,\quad \mbox{for all}\quad s\in \mathbb{R}.\end{align}
Therefore, from \eqref{hyp-g'-2} and using the same arguments used in \cite[Remark 4:1]{EECT-2017} with $\gamma=4$, we obtain
\begin{align}
\label{hyp-g'-22}
\left[\,g(r)-g(s)\,\right](r-s)\ge \kappa_2|s-r|^2+\frac{\kappa_0}{10}\left(|r|^4+|s|^4\right)|r-s|^2.
\end{align}

Let $z=u^1-u^2.$ Then $z$ satisfies the equation
\begin{eqnarray}\left\{\begin{array}{l}\label{diff-weak-solution}\displaystyle{
z_{tt}-\Delta z+\left[\,g(u^1_t)-g(u^2_t)\,\right]+\left[\,f(u^1)-f(u^2)\,\right]=0,\quad \mbox{in}\quad \Omega\times [0,\infty),}\\
\displaystyle{ (z(0),z_t(0))=(u_{0}-v_{0},u_{1}-v_{1}),\quad z=0,\quad \mbox{on}\quad \Gamma.}\end{array}\right.
\end{eqnarray}
Let $\chi_{\theta}(t):=\theta u^1(t)+(1-\theta)u^2(t)$. Using the energy identity \eqref{energy-equality} for the difference of two weak solutions and integrating the relation obtained from $\tau\ge 0$ to $T$, we get 
%\begin{eqnarray}
%	\begin{aligned}
%\label{ASP-1}
%\frac{d}{dt}E_z(t)+\int_{\Omega}D(z_t)dx=K_f\int_{\Omega}zz_tdx+\frac{1}{2}\int_{\Omega}\int_0^1f''(\chi_{\theta})(\chi_{\theta})_td\theta|z|^2dx.
%\end{aligned}
%\end{eqnarray}
%Integrating \eqref{ASP-1} from $\tau\ge 0$ to $T$, we get
\begin{align}
\label{ASP-11}
E_z(T)-E_z(\tau)+\int_{\tau}^T\int_{\Omega}D(z_t(s))dxds=\;\underbrace{K_f\int_{\tau}^T\int_{\Omega}zz_tds}_{J_1}
+\underbrace{\frac{1}{2}\int_{\tau}^T\int_{\Omega}\int_0^1f''(\chi_{\theta})(\chi_{\theta})_td\theta|z|^2dxds}_{J_2},
\end{align}
where $D(z_t):=\left[g(u^1_t)-g(u^2_t)\right]z_t$
and
$$E_z:=\frac{1}{2}||S_tU^1_0-S_tU^2_0||^2_{\mathcal{H}}+\frac{1}{2}\int_{\Omega}\int_0^1f'(\chi_{\theta})d\theta|z|^2dx+\frac{K_f}{2}\|z\|^2,$$
with the constant $K_f$  obtained from dissipativity condition \eqref{hyp-inf-f} as in \eqref{ff}.

Using H\"older inequality, \eqref{hyp-g'-22}, and Young's inequality, we have
\begin{eqnarray*}
|J_1|%&\le &K_f\int_{\tau}^T\|z\|\|z_t\|ds\le \frac{K_f}{\kappa_2^{1/2}}\left(\int_{\tau}^T\|z\|^2ds\right)^{1/2}\left(\int_{\tau}^T\int_{\Omega}D(z_t(s))dxds\right)^{1/2}\\
\le \frac{K_f^2}{2\kappa_2}\int_{\tau}^T\|z\|^2ds+\frac{1}{2}\int_{\tau}^T\int_{\Omega}D(z_t(s))dxds.
\end{eqnarray*}
From Assumption \eqref{hyp_f''}, H\"older inequality with $\frac{1}{2}+\frac{1}{6}+\frac{1}{3}=1$, embedding $H^1\hookrightarrow L^6(\Omega)$, and Young's inequality, we have
\begin{eqnarray*}
|J_2|%&\le& \;C\int_{\tau}^T\left[\,1+\|u^1\|^3_6+\|u^2\|^3_{6}\,\right]\left[\,\|u^1_t\|_6+\|u^2_t\|_6\,\right]\|z\|_6^2ds\\
%&\le& C_B\int_{\tau}^T\left[\,\|u^1_t\|_6+\|u^2_t\|_6\,\right]\|\nabla z\|^2ds\\
&\le& \delta\int_{\tau}^TE_z(s)ds+ C_{B,\delta}\int_{\tau}^Td(s,u^1,u^2)E_z(s)ds,
\end{eqnarray*}
where
$d(t,u^1,u^2):=\|u^1_t(t)\|^6_6+\|u^2_t(t)\|^6_6.$
Then, substituting $|J_1|$ and $|J_2|$ into \eqref{ASP-11}, we find
\begin{eqnarray}
\begin{aligned}
\label{ASP-111}
E_z(T)-E_z(\tau)+\frac{1}{2}\int_{\tau}^T\int_{\Omega}D(z_t(s))dxds
\le &\;\frac{K_f^2}{2\kappa_2}\int_{\tau}^T\|z\|^2ds+\delta\int_{\tau}^TE_z(s)ds\\
&+ C_{B,\delta}\int_{\tau}^Td(s,u^1,u^2)E_z(s)ds.
\end{aligned}
\end{eqnarray}
Now, integrating \eqref{ASP-111} from $0$ to $T$, we obtain
\begin{eqnarray}
\begin{aligned}
\label{ASP-1111}
&TE_z(T)+\frac{1}{2}\int_0^T\int_{\tau}^T\int_{\Omega}D(z_t(s))dxdsd\tau\;\le\;\int_0^TE(\tau)d\tau \\
&+\frac{K_f^2}{2\kappa_2}\int_0^T\int_{\tau}^T\|z\|^2dsd\tau+\delta T\int_{0}^TE_z(s)ds+ TC_{B,\delta}\int_0^Td(s,u^1,u^2)E_z(s)ds.
\end{aligned}
\end{eqnarray}
Next, multiplying Eq. \eqref{diff-weak-solution} by $z$ and integrating over $\Omega\times [0,T]$, we have
\begin{eqnarray}
\label{ASP-2}
\begin{aligned}
2\int_0^TE_z(s)ds=&\;2\int_0^T\|z_t\|^2ds+K_f\int_0^T\|z\|^2ds\\
&-\underbrace{\left[\int_{\Omega}z_t(s)z(s)dx\right]^{s=T}_{s=0}}_{J_3}-\underbrace{\int_0^T\int_{\Omega}\left[g(u^1_t)-g(u^2_t)\right]zdxds}_{J_4}.
\end{aligned}
\end{eqnarray}
From \eqref{hyp-g'-22}, we get
\begin{eqnarray}\label{j0}
2\int_0^T\|z_t\|^2ds\le \frac{2}{\kappa_2} \int_0^T\int_{\Omega}D(z_t(s))dxds.
\end{eqnarray}
From embedding $H^1\hookrightarrow H^0$ and using inequality \eqref{est2-7}, we have
\begin{align}
\label{Asymptotic-2}
|-J_3|\le \frac{1}{\lambda_1^{1/2}}\left[\,E_z(T)+E_z(0)\,\right].
\end{align}
Still, using equality \eqref{ASP-11} and estimates $|J_1|$ and $|J_2|$ (with $\tau=0$) and choosing $\delta=\frac{\lambda_1^{1/2}}{2}$, we obtain
\begin{eqnarray}
\label{Asymptotic-22}
\begin{aligned}
E_z(0)\le&\; E_z(T)+\frac{3}{2}\int_0^T\int_{\Omega}D(z_t(s))dxds+\frac{K^2_f}{2\kappa_2}\int_0^T\|z\|^2ds\\
&+\frac{\lambda_1^{1/2}}{2}\int_{0}^TE_z(s)ds+C_{B}\int_{0}^Td(s,u^1,u^2)E_z(s)ds.
\end{aligned}
\end{eqnarray}
Then, substituting \eqref{Asymptotic-22} in \eqref{Asymptotic-2}, we obtain
\begin{eqnarray}
\label{j3}
\begin{aligned}
|-J_3|\le&\; \frac{2}{\lambda_1^{1/2}}E_z(T)+\frac{3}{2\lambda_1^{1/2}}\int_0^T\int_{\Omega}D(z_t(s))dxds+\frac{K^2_f}{2\kappa_2\lambda_1^{1/2}}\int_0^T\|z\|^2ds\\
&+\frac{1}{2}\int_0^TE_z(s)ds+C_{B}\int_{0}^Td(s,u^1,u^2)E_z(s)ds.
\end{aligned}
\end{eqnarray}
From Assumption \eqref{hyp_g'} and \eqref{hyp-g'-22}, we have
\begin{align}\label{j4}
|-J_4|%=&\;\left|-\int_0^T\int_{\Omega}\int_0^1g'(\theta u^1_t+(1-\theta)u^2_t)d\theta z_tzdxds\right|\\	
%\le&\;\kappa_1\int_0^T\int_{\Omega}\int_0^1\left[\,1+|\theta u^1_t+(1-\theta)u^2_t|^4\,\right]d\theta |z_t||z|dxds\\
%\le&\;\kappa_1\int_0^T\|z_t\|\|z\|ds+8\kappa_1\int_0^T\int_{\Omega}\left[|u^1_t|^4+|u^2_t|^4\right]|z_t||z|dxds\\
%\le&\;\int_0^T\|z_t\|^2ds+\frac{\kappa_1^2}{4}\int_0^T\|z\|^2ds+8\kappa_1\int_0^T\int_{\Omega}\left[|u^1_t|^4+|u^2_t|^4\right]|z_t||z|dxds\\
\le \;\frac{1}{\kappa_2} \int_0^T\int_{\Omega}D(z_t)dxds+\frac{\kappa_1^2}{4}\int_0^T\|z\|^2ds+\underbrace{8\kappa_1\int_0^T\int_{\Omega}\left[|u^1_t|^4+|u^2_t|^4\right]|z_t||z|dxds}_{J_4'}
\end{align}
where from H\"older's inequality with $\frac{1}{2}+\frac{1}{2}=1$ and $\frac{2}{3}+\frac{1}{3}=1$, embedding $H^1\hookrightarrow L^6(\Omega)$, \eqref{est2-7}, and \eqref{hyp-g'-22}, we have
\begin{align}
\label{j4'}
J'_4%\le&\;8\kappa_1\int_0^T\left(\int_{\Omega}\left[|u^1_t|^4+|u^2_t|^4\right]|z_t|^2dx\right)^{1/2}\left(\int_{\Omega}\left[|u^1_t|^4+|u^2_t|^4\right]|z|^2dx\right)^{1/2}ds\\
%\le&\;4\kappa_1\int_0^T\left[\int_{\Omega}\left[|u^1_t|^4+|u^2_t|^4\right]|z_t|^2dx+\int_{\Omega}\left[|u^1_t|^4+|u^2_t|^4\right]|z|^2dx\right]\\
%\le&\;4\kappa_1\int_0^T\int_{\Omega}\left[|u^1_t|^4+|u^2_t|^4\right]|z_t|^2dxds\\
%&+4\kappa_1\int_0^T\left(\int_{\Omega}\left[\,|u^1_t|^4+|u^2_t|^4\,\right]^{3/2}dx\right)^{2/3}\left(\int_{\Omega}|z|^6dx\right)^{1/3}ds\\
%\le&\;4\kappa_1\int_0^T\int_{\Omega}\left[|u^1_t|^4+|u^2_t|^4\right]|z_t|^2dxds+4\kappa_1\int_0^T\left[\,|u^1_t\|^4_6+\|u^2_t\|^4_6\,\right]\|z\|^2_6ds\\
%\le&\;4\kappa_1\int_0^T\int_{\Omega}\left[|u^1_t|^4+|u^2_t|^4\right]|z_t|^2dxds+\frac{4\kappa_1}{\lambda}\int_0^T\left[\,|u^1_t\|^4_6+\|u^2_t\|^4_6\,\right]\|\nabla z\|^2ds\\
%\le&\;4\kappa_1\int_0^T\int_{\Omega}\left[|u^1_t|^4+|u^2_t|^4\right]|z_t|^2dxds+\frac{8\kappa_1}{\lambda}\int_0^T\left[\,|u^1_t\|^4_6+\|u^2_t\|^4_6\,\right]E_z(s)ds\\
%\le&\;4\kappa_1\int_0^T\int_{\Omega}\left[|u^1_t|^4+|u^2_t|^4\right]|z_t|^2dxds+\frac{32\kappa_1^2}{\lambda^2}\int_0^Td(s,u^1,u^2)E_z(s)ds+\frac{1}{2}\int_0^TE_z(s)ds\\
\le&\;\frac{40\kappa_1}{\kappa_0}\int_0^T\int_{\Omega}D(z_t(s))dxds+\frac{32\kappa_1^2}{\lambda^2}\int_0^Td(s,u^1,u^2)E_z(s)ds+\frac{1}{2}\int_0^TE_z(s)ds.
\end{align}
Replacing \eqref{j0}, \eqref{j3}, \eqref{j4}, and \eqref{j4'} in \eqref{ASP-2}, we obtain that
\begin{eqnarray}\label{ASP-22}
\begin{aligned}
\int_0^TE_z(s)ds\le&\; \frac{2}{\lambda_1^{1/2}}E_z(T)+C_0 \int_0^T\int_{\Omega}D(z_t(s))dxds\\
&+C_1\int_0^T\|z\|^2ds+C'_B\int_0^Td(s,u^1,u^2)E_z(s)ds,
\end{aligned}
\end{eqnarray}
where $C_0=\frac{2}{\kappa_2}+\frac{3}{2\lambda_1^{1/2}}+\frac{1}{\kappa_2}+\frac{40\kappa_1}{\kappa_2}$, $C_1=K_f+\frac{K_f^2}{2\kappa\lambda_1^{1/2}}+\frac{\kappa_1^2}{4}$, and $C'_B=C_B+\frac{32\kappa_1^2}{\lambda^2}$.
Combining \eqref{ASP-1111} and \eqref{ASP-22} we obtain that
\begin{eqnarray}
\begin{aligned}
\label{ASP-33}
&\left(T-\frac{4}{\lambda_1^{1/2}}\right)E_z(t)+\left(1-\delta T\right)\int_0^TE(s)ds\le \;2C_0 \int_0^T\int_{\Omega}D(z_t(s))dxds\\
&\quad+\left(TC_{B,\delta}+2C'_B\right)\int_0^Td(s,u^1,u^2)E_z(s)ds
+\left(2C_1+\frac{K_fT}{2\kappa_2}\right)\int_0^T\|z\|^2ds.
\end{aligned}
\end{eqnarray}
Using without loss of generality that $\frac{T}{2}-\frac{4}{\lambda_1^{1/2}}>0$ and taking $\delta$ small enough such that $1-\delta T\ge \frac{1}{2}$, it follows from \eqref{ASP-33} that
\begin{eqnarray}
\begin{aligned}
\label{ASP-333}
&TE_z(T)+\int_0^TE(s)ds\le \;4C_0 \int_0^T\int_{\Omega}D(z_t(s))dxds\\
&\quad+2\left(TC_{B,\delta}+2C'_B\right)\int_0^Td(s,u^1,u^2)E_z(s)ds
+2\left(2C_1+\frac{K_fT}{2\kappa_2}\right)\int_0^T\|z\|^2ds.
\end{aligned}
\end{eqnarray}
From \eqref{ASP-111} with $\tau=0$, we have
\begin{align}
\label{ASP-11111}
\frac{1}{2}\int_{0}^T\int_{\Omega}D(z_t(s))dxds
\le&\; E_z(0)-E_z(T)+\delta\int_{0}^TE_z(s)ds
+ C_{B,\delta}\int_{0}^Td(s,u^1,u^2)E_z(s)ds.
\end{align}
Substituting \eqref{ASP-11111} into \eqref{ASP-333} and choosing $\delta=\frac{1}{2C_0}$, we obtain
\begin{eqnarray*}
E_z(T)\le\frac{2C_0}{T+2C_0} E_z(0)+C_B\left[\sup_{s\in[0,T]}\|z(s)\|^2+\int_0^Td(s,u^1,u^2)E_z(s)ds\right],
\end{eqnarray*}
where $C_B=\frac{4C_0\left(TC_{B,\delta}+2C'_B\right)}{T+2C_0}+\frac{4C_0\left(2C_1+\frac{K_fT}{2\kappa_2}\right)}{T+2C_0}$. 

Reiterating the estimate on the intervals $[mT,(m+1)T]$ yields
\begin{align*}
E_z((m+1)T)\le \gamma E_z(mT)+C_{B}b_{m},\quad m=0,1,2,\cdots,
\end{align*}
where 
$$0<\gamma\equiv\frac{2C_{0}}{T+2C_{0}}<1\quad\mbox{and}\quad b_m\equiv \sup_{s\in [mT,(m+1)T]}\|z(s)\|^2+\int_{mT}^{(m+1)T}d(s;u^1,u^2)E_z(s)ds.$$
This yields
$$E_z(mT)\le \gamma^{m}E_z(0)+c\sum_{l=1}^m\eta^{m-l}b_{l-1}.$$
Since $\gamma<1$, using the same argument as in (\cite{chueshov-white}, Remark 3.30) along with the definition of $b_l$ we obtain that there exists $\varrho>0$ such that
\begin{align*}
E_z(t)\le C_1e^{-\varrho t}E_z(0)+C_2\left[\,\sup_{0\le s\le t}\|z(s)\|^2+\int_0^te^{-\varrho(t-s)}d(s,u^1,u^2)E_z(s)ds\,\right],
\end{align*}
for all $t\ge 0$. Therefore, applying Gronwall's lemma we find
\begin{align*}
E_z(t)\le \left[\,C_1e^{-\varrho t}E_z(0)+C_2\sup_{0\le s\le t}\|z(s)\|^2\,\right]e^{C_2\int_0^td(s,u^1,u^2)ds}.
\end{align*}
Using that $\frac{1}{2}||U^1-U^2||^2_{\mathcal{H}}\le E_z(t)\le C_B||U^1-U^2||^2_{\mathcal{H}}$, we have
\begin{align*}
||U^1-U^2||^2_{\mathcal{H}}\le b(t)||U^1_0-U^2_0||^2_{\mathcal{H}}+c(t)\sup_{0\le s\le t}\|z(s)\|^2.
\end{align*}
where
\begin{align}b(t):=2C_BC_1e^{-\varrho t}e^{C_2\int_0^td(s,u^1,u^2)ds}\quad \mbox{and}\quad c(t):=2C_2e^{C_2\int_0^td(s,u^1,u^2)ds}.\label{definiton-c(t)}\end{align}
Thus, using that  $d(s,u^1,u^2)=\|u^1_t\|^6_6+\|u^2_t\|^6_6\in L^1(0,t)$,  {\bf uniformly in $t>0$} we obtain
$b(t)\in L^1(\mathbb{R}^+)$, $\lim_{t\rightarrow +\infty}b(t)=0$,
and $c(t)$ is bounded on $\mathbb{R}^+$ due to $L^1(\mathbb{R}^+)$ integrability  of $d(s,u^1,u^2) $.
The proof of Proposition \ref{stabilizability-estimate} is  now complete.
\end{proof}

\section{Proof of Theorem \ref{theo_main2}: Properties of Attractors}
\subsection {Proof of Part (I) and Part (III) in Theorem \ref{theo_main2}.}
On the virtue of Theorem \ref{theo-quasi}, we have established that under  the condition $g'(0)>0$ the system $(\mathcal{H},S_t)$ is  \enquote{quasi-stable}. Therefore, the finite dimension of the attractor $\mathfrak{A}$ (Part {\bf (I)}) is guaranteed by direct application of \cite[Theorem 7.9.6]{chueshov-yellow}.
As to Part {\bf (II)} we are in a position to obtain at this point only partial result as stated in (\ref{regularity-trajec-attrac-2}). Indeed, it follows from Proposition \ref{Prop-absorbing-set} (Appendix \ref{appendix})  that the uniform estimate
\begin{align}\label{regularity-trajec-attrac-1}||U(t)||^2_{\mathcal{H}}\le R^2,\quad t\in \mathbb{R},\end{align}
is valid for any trajectory $\{U(t):t\in \mathbb{R}\}$ lying in the attractor $\mathfrak{A}$.
In addition to the quasi-stability property of the system $(\mathcal{H},S_t)$, using that the function $c(t)$ defined in \eqref{definiton-c(t)} has the property $c_{\infty}=\sup_{t\in \mathbb{R}^+}c(t)<\infty$, it follows by direct application of \cite[Theorem 7.9.8]{chueshov-yellow} that any complete trajectory $\{U(t):t\in \mathbb{R}\}$ that belongs to the global attractor $\mathfrak{A}$ has the following regularity properties
$$u_t\in L^{\infty}(\mathbb{R};H^1)\cap C(\mathbb{R};H^0),\quad u_{tt}\in L^{\infty}(\mathbb{R};H^0).$$
Moreover, there exists $R_1>0$ such that
\begin{align}\label{regularity-trajec-attrac-2}
||U_t(t)||^2_{\mathcal{H}}=\|u_{t}(t)\|_{H^1}+\|u_{tt}(t)\|^2\le R^2_1,\quad t\in \mathbb{R},
\end{align}
where $R_1$ depends on the constant $c_{\infty}$. To complete the proof of Part {\bf (II)}, it remains to prove that $u\in L^{\infty}(\mathbb{R};H^2)$ and $\|u(t)\|_{H^2}\le R_2$ for all $t\in \mathbb{R}$ for some $R_2>0$. Since the proof of this result requires technical  arguments, we carry out the proof separately in Section \ref{proof-H2-regularity}.\qed\\

The proof of Part {\bf (III)} is an application of \cite[Theorem 7.9.9]{chueshov-yellow}. First, let $\mathfrak{B}$ be the absorbing set obtained in the Proposition \ref{Prop-absorbing-set}. From Proposition \ref{stabilizability-estimate}, the system $(\mathcal{H},S_t)$ is quasi-stable on $\mathfrak{B}$. Now, let us consider the Hilbert spaces $\mathcal{H}_{-s}=H^{-s+1}\times H^{-s}$ with $0<s\le 1$, where $H^{2s}:=D[(-\Delta)^{s}],$ $s\in \mathbb{R}$, with Dirichlet boundary conditions.
Next, we consider the mapping $t\in[0,T]\mapsto S_tz\in \mathcal{H}_{-1}=H^0\times H^{-1}$. Let $U(t) = S_tz=z(t)$ be a weak solution with initial data $z\in \mathfrak{B}$. It follows from Theorem \ref{theo-global} that $U_t$ has regularity $U_t=(u_t,u_{tt})\in L^{6/5}(0,T;\mathcal{H}_{-1})$. Then, we have
\begin{eqnarray*}
||S_{t_1}z-S_{t_2}z||_{\mathcal{H}_{-1}}&\le& \int_{t_1}^{t_2}\left\|\frac{d}{dt}z(s)\right\|_{\mathcal{H}_{-1}}ds\le \left(\int_{t_1}^{t_2}\|(u_t(s),u_{tt}(s))\|_{\mathcal{H}_{-1}}^{6/5}ds\right)^{5/6}|t_2-t_1|^{1/6}\\
&\le& C_{\mathfrak{B}}|t_2-t_1|^{1/6},\quad 0\le t_1\le t_2\le T.
\end{eqnarray*}
This  shows that for each $z\in \mathfrak{B}$, the map $t\mapsto S_tz$ is H\"older continuous in the extended space $\mathcal{H}_{-1}$ with exponent $\gamma=1/6$. Now, we assume that $0<s<1$. Then, using that $\mathcal{H}_{0}\hookrightarrow \mathcal{H}_{-s}\hookrightarrow \mathcal{H}_{-1}$, applying interpolation theorem in each component of $\mathcal{H}_{-s}$ and using the H\"older continuity in $\mathcal{H}_{-1}$, we obtain
\begin{eqnarray*}
||S_{t_1}z-S_{t_2}z||_{\mathcal{H}_{-s}}&\le& C_s||S_{t_1}z-S_{t_2}z||^{1-s}_{\mathcal{H}}||S_{t_1}z-S_{t_2}z||^{s}_{\mathcal{H}_{-1}}\\
&\le& C_{\mathfrak{B}}|t_2-t_1|^{s/6},\quad \mbox{with}\quad s\in (0,1].
\end{eqnarray*}
This shows that $t\mapsto S_tz$ is H\"older continuous in the extended spaces $\mathcal{H}_{-s}$ for $s\in (0,1]$. Therefore, the existence of a generalized exponential attractor in $\mathcal{H}_{-s}$ with $0<s\le 1$, follows from \cite[Theorem 7.9.9]{chueshov-yellow}.
\qed

\subsection{Maximal Regularity of the Attractor-Completion of the Proof of Part II}\label{proof-H2-regularity}
To obtain this result we will use arguments similar to those used by Chueshov and Lasiecka in \cite[Section 4.2]{chueshov-lasiecka-2007}, see aso \cite{zelik}.
The proof follows the following 3 steps. In the first, we prove the $H^2$ regularity of the stationary solutions of problem \eqref{P}; in the second, using the regularity of stationary solutions, we establish the smoothness of the trajectories for negative times $t\to -\infty$. Finally, in step 3, assuming the regularity obtained in step 2 for an initial data $U(T_0)\in H^2$ with $T_0<0$, we propagate this regularity to all $t\in \mathbb{R}.$

\subsubsection*{Step 1: Regularity of stationary solutions}
\begin{lemma}\label{lemma-stat-sol}
The stationary solutions $u$ of problem \eqref{P} are bounded in $H^2$.
\end{lemma}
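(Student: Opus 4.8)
The plan is to upgrade the $H^1$ bound already recorded in \eqref{est-5} to a uniform $H^2$ bound, in two stages. Recall that each $u\in\mathcal{N}$ solves the stationary problem \eqref{est-1}, namely $-\Delta u+f(u)=h$ with $u\in H^1_0$, and that Proposition \ref{Prop-stationary solutions} already gives $\|\nabla u\|^2\le \frac{1}{\omega^2\lambda_1}\|h\|^2+\frac{2C_\nu|\Omega|}{\omega}$ for every stationary solution. First I would establish, for each fixed $u\in\mathcal{N}$, the qualitative membership $u\in H^2$; then, with this regularity in hand to justify the integrations by parts, I would extract a quantitative bound from a single multiplier identity whose right-hand side involves only the $H^1$ norm of $u$, so that uniformity over $\mathcal{N}$ is automatic.

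The delicate point is the qualitative regularity, because the naive elliptic bootstrap stalls at the critical exponent: from $u\in H^1\hookrightarrow L^6$ and \eqref{hyp_f'} one only gets $f(u)\in L^{6/5}$, hence $-\Delta u\in L^{6/5}$ and $u\in W^{2,6/5}\hookrightarrow L^6$, with no gain. To cross this threshold I would linearise the equation by writing $f(u)=V_0\,u$ with $V_0:=\int_0^1 f'(\theta u)\,d\theta$; since $f(0)=0$ and $|f'(s)|\le C_f(1+|s|^4)$, one has $|V_0|\le C(1+|u|^4)\in L^{3/2}(\Omega)$, so that $u$ solves the Schr\"odinger-type equation $-\Delta u+V_0 u=h$ with potential $V_0\in L^{3/2}(\Omega)$ and $h\in L^2(\Omega)$. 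This is precisely the endpoint setting of the Brezis--Kato / Moser iteration in dimension three: splitting $V_0$ into an $L^\infty$ part and a tail of arbitrarily small $L^{3/2}$ norm (which is absorbed by the Sobolev constant) yields $u\in L^p(\Omega)$ for every $p<\infty$. Consequently $|u|^5\in L^2$, so $f(u)\in L^2$, $-\Delta u=h-f(u)\in L^2$, and elliptic regularity on the smooth domain gives $u\in H^2\cap H^1_0$.

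With $u\in H^2$ secured, I would test \eqref{est-1} with $-\Delta u$. Since $u\in H^2\cap H^1_0$ and $u|_\Gamma=0$ forces $f(u)|_\Gamma=f(0)=0$, integration by parts gives $\int_\Omega f(u)(-\Delta u)\,dx=\int_\Omega f'(u)|\nabla u|^2\,dx$, whence
\[
\|\Delta u\|^2=\int_\Omega h\,(-\Delta u)\,dx-\int_\Omega f'(u)|\nabla u|^2\,dx.
\]
Bounding the first term by $\tfrac14\|\Delta u\|^2+\|h\|^2$ and using the uniform lower bound $f'(s)\ge -K_f$ extracted from \eqref{f'-inf}--\eqref{f'-2} (the same constant $K_f$ as in \eqref{ff}), so that $-\int_\Omega f'(u)|\nabla u|^2\,dx\le K_f\|\nabla u\|^2$, I obtain $\tfrac34\|\Delta u\|^2\le \|h\|^2+K_f\|\nabla u\|^2$. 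Combining this with the $H^1$ bound \eqref{est-5} and the elliptic estimate $\|u\|_{H^2}\le C\|\Delta u\|$ produces a constant $R_2$ depending only on $\|h\|$, $K_f$, $\lambda_1$, $\omega$, $C_\nu$ and $|\Omega|$ with $\|u\|_{H^2}\le R_2$ for all $u\in\mathcal{N}$, which is the assertion.

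The main obstacle is clearly the prerequisite for the second stage, i.e. crossing the critical integrability barrier in order to know $u\in H^2$ at all; the Brezis--Kato truncation is the device that does this, and everything else is routine. I note that the qualitative step can be bypassed by instead running the multiplier estimate on a truncated nonlinearity $f_R$ (with $f_R'\ge -K_f$ preserved and $f_R$ bounded, so that the corresponding solutions are genuinely in $H^2$), deriving the same uniform bound, and passing to the limit $R\to\infty$; this keeps the argument self-contained but requires identifying the limit with the given stationary solution.
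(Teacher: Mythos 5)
Your proposal is correct, and its quantitative core is exactly the paper's argument: multiply \eqref{est-1} by $-\Delta u$, integrate by parts to produce $\int_{\Omega}f'(u)|\nabla u|^2dx$, control that term from below by $-K_f\|\nabla u\|^2$ using the same constant $K_f$ as in \eqref{ff}, absorb the $h$-term by Young's inequality, and close with the $H^1$ bound \eqref{est-5}; this is precisely how \eqref{est-8} is obtained. Where you genuinely go beyond the paper is the preliminary qualitative step. The paper simply asserts in Proposition \ref{Prop-stationary solutions} that $u\in H^2$ ``by elliptic regularity,'' but, as you correctly point out, the naive bootstrap stalls at the critical exponent: \eqref{hyp_f'} and $u\in H^1\hookrightarrow L^6$ only give $f(u)\in L^{6/5}$, hence $u\in W^{2,6/5}\hookrightarrow L^6$ with no gain, so the multiplier identity is not justified without further argument. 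Your Brezis--Kato step --- writing $f(u)=V_0u$ with $V_0=\int_0^1f'(\theta u)\,d\theta\in L^{3/2}(\Omega)$, splitting off a small-$L^{3/2}$ tail to conclude $u\in L^p$ for all $p<\infty$, hence $f(u)\in L^2$ and $u\in H^2$ by linear elliptic theory --- is the standard and correct way to cross this barrier, and it makes rigorous what the paper leaves implicit. Your alternative suggestion (truncating $f$ to $f_R$ with $f_R'\ge -K_f$, deriving the uniform bound, and passing to the limit) would work as well, though it carries the extra burden of identifying the limit with the given stationary solution; the Brezis--Kato route avoids that issue and is the cleaner completion of the paper's proof.
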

\begin{proof}
Multiplying \eqref{est-1} by $-\Delta u$ and integrating over $\Omega$, we get
\begin{eqnarray}\label{est-2}
||\Delta u||^2-\int_{\Omega}f(u)\Delta udx=-\int_{\Omega}h\Delta udx.
\end{eqnarray}
Then, using Assumption \eqref{hyp_f'} together with Hölder's inequality and \eqref{est-5}, we conclude that%and embedding $H^2\hookrightarrow H^1$, we get
%\begin{align*}
%	-\int_{\Omega}f(u)\Delta udx=\int_{\Omega}f'(u)|\nabla u|^2dx\ge -K_f\|\nabla u\|^2.
%\end{align*}
%where $K_f$ arises as in \eqref{ff}.
%From H\"older and Young inequalities, we have
%\begin{align*}
%\left|-\int_{\Omega}h\Delta udx\right|\le \frac{1}{2\omega}\|h\|^2+\frac{\omega}{2}\|\Delta u\|^2.
%\end{align*}
%Substituting the last two inequalities in \eqref{est-2} and 
\begin{align}\label{est-8}
||\Delta u||^2\le \frac{1}{\omega^2}\|h\|^2+\frac{2K_f}{\omega}\|\nabla u\|^2\le \frac{1}{\omega^2}\|h\|^2+\frac{2K_f}{\omega}\left(\frac{1}{\omega^2\lambda_1}\|h\|^2+\frac{2C_{\nu}|\Omega|}{\omega}\right)=:\varrho,
\end{align}
where $K_f$ arises as in \eqref{ff}. This completes the proof of the lemma.
\end{proof}

\subsubsection*{Step 2: Smoothness on negative time scale}
\paragraph{Decomposition of the solutions.}
Let $\{U(t)=(u(t),u_t(t)):t\in \mathbb{R}\}$ be a trajectory from the global attractor $\mathfrak{A}=\mathcal{M}(\mathcal{N})$ and  let's consider  the following  decomposition of the solution   $u=w+z$ with $W=(w,w_t)$ satisfying the problem
\begin{eqnarray}\left\{\begin{array}{l}\label{prob-H2-sol-1}
w_{tt} -\Delta w + g(w_t) + f(w) = h,\quad\mbox{in}\quad\Omega \times (s,T)  \\
w=0,\quad \mbox{on}\quad\Gamma,\\
(w(s),w_t(s))=(u^*,0),
\end{array}\right.\end{eqnarray}
where {\bf $u^*\in H^2$ is a stationary solution of \eqref{P}}, and $Z=(z,z_t)$ satisfying the following problem
\begin{eqnarray}\left\{\begin{array}{l}\label{prob-H2-sol-2}
z_{tt} -\Delta z  +g(u_t )- g(w_t)=-[\,f(u)-f(w)\,], \quad\mbox{in}\quad\Omega \times (s,T),\\
z=0,\quad\mbox{on}\quad\Gamma,\\
(z(s),z_t(s)) = (u(s)-u^*,u_t(s)).
\end{array}\right.\end{eqnarray}

\paragraph{\bf{Estimate 1}.} The estimates performed below are \enquote{formal}. However, they can be easily justified by considering Galerkin approximations
Taking the inner product of the Eq. in \eqref{prob-H2-sol-1} with $-\Delta w_t$ in $H^0$, yields
\begin{eqnarray}\label{reg-H2-1}
\begin{aligned}
&\frac{d}{dt}\left[\,\frac{1}{2}||W(t)||^2_{\mathcal{H}_1}+\frac{1}{2}\int_{\Omega}f'(w)|\nabla w|^2dx+\int_{\Omega}h\Delta wdx\,\right]+\int_{\Omega}g'(w_t)|\nabla w_t|^2dx\\
&\quad=\frac{1}{2}\int_{\Omega}f''(w)w_t|\nabla w|^2dx.
\end{aligned}
\end{eqnarray}
where $\mathcal{H}_1 = H^2\times H^1$.
On the other hand, taking the inner product of the Eq. in \eqref{prob-H2-sol-1} with $-\alpha\Delta w$,  with $\alpha\in (0,1)$, we get
\begin{eqnarray}\label{reg-H2-2}
\begin{aligned}
&\alpha\frac{d}{dt}\int_{\Omega}\nabla w_t\nabla wdx+\alpha\|\Delta w(t)\|^2+\alpha\int_{\Omega}f'(w)|\nabla w|^2dx+\alpha\int_{\Omega}h\Delta wdx\\
&=\alpha\|\nabla w_t(t)\|^2-\alpha\int_{\Omega}g'(w_t)\nabla w\nabla w_tdx.
\end{aligned}
\end{eqnarray}
Then, combining \eqref{reg-H2-1} and \eqref{reg-H2-2}, we obtain
\begin{eqnarray}\label{reg-H2-12}
\begin{aligned}
&\frac{d}{dt}\mathcal{E}_w(t)+\alpha\mathcal{E}_w(t)+\frac{\alpha}{2}\|\Delta w(t)\|^2+\int_{\Omega}g'(w_t)|\nabla w_t|^2dx\\
&=\frac{3\alpha}{2}\|\nabla w_t(t)\|^2+\frac{1}{2}\int_{\Omega}f''(w)w_t|\nabla w|^2dx-\alpha\int_{\Omega}g'(w_t)\nabla w\nabla w_tdx+\alpha\int_{\Omega}\nabla w_t\nabla wdx.
\end{aligned}
\end{eqnarray}
where
$$\mathcal{E}_w(t):=\frac{1}{2}||W(t)||^2_{\mathcal{H}_1}+\frac{1}{2}\int_{\Omega}f'(w)|\nabla w|^2dx+\int_{\Omega}h\Delta wdx+\alpha\int_{\Omega}\nabla w_t\nabla wdx.$$
Note that, from dissipative condition \ref{hyp-inf-f}, Holder inequality, and embedding $H^2\hookrightarrow H^1$, we can estimate the functional $\mathcal{E}_w(t)$ from below as follows
\begin{eqnarray}\label{reg-H2-3}
\begin{aligned}\mathcal{E}_w(t)\ge &\;\frac{1}{4}\|\Delta w(t)\|^2-\|h\|^2-\frac{K_f+\alpha^2}{2}\|\nabla w\|^2,
\end{aligned}\end{eqnarray}
where $K_f$ appears as in \eqref{ff}.
So, we define the perturbed functional $\widetilde{\mathcal{E}}_w(t)$ by
\begin{align*}\widetilde{\mathcal{E}}_w(t):=\mathcal{E}_w(t)+\frac{K_f+\alpha^2}{2}\|\nabla w\|^2+\|h\|^2,\end{align*}
it follows from \eqref{reg-H2-3} that
\begin{eqnarray}\label{reg-H2-33}\widetilde{\mathcal{E}}_w(t)\ge \frac{1}{4}\|\Delta w(t)\|^2.
\end{eqnarray}
Returning to \eqref{reg-H2-12}, we obtain
\begin{eqnarray}\label{reg-H2-123}
\begin{aligned}
&\frac{d}{dt}\widetilde{\mathcal{E}}_w(t)+\alpha\widetilde{\mathcal{E}}_w(t)+\frac{\alpha}{2}\|\Delta w(t)\|^2+\int_{\Omega}g'(w_t)|\nabla w_t|^2dx\\
&=\alpha\|h\|^2+\frac{3\alpha}{2}\|\nabla w_t(t)\|^2+\alpha\int_{\Omega}\nabla w_t\nabla wdx+(K_f+\alpha^2)\int_{\Omega}\nabla w_t\nabla wdx\\
&\quad+\frac{\alpha(K_f+\alpha^2)}{2}\|\nabla w(t)\|^2+\frac{1}{2}\int_{\Omega}f''(w)w_t|\nabla w|^2dx-\alpha\int_{\Omega}g'(w_t)\nabla w\nabla w_tdx.
\end{aligned}
\end{eqnarray}
Now let's estimate the terms on the right-hand side of \eqref{reg-H2-123}. First, from embedding $H^2\hookrightarrow H^1$, \eqref{regularity-trajec-attrac-1}, and \eqref{regularity-trajec-attrac-2}, it follows:
\begin{align*}
&\alpha\|h\|^2+\frac{3\alpha}{2}\|\nabla w_t\|^2+\alpha\int_{\Omega}\nabla w_t\nabla wdx+(K_f+\alpha^2)\int_{\Omega}\nabla w_t\nabla wdx+\frac{\alpha(K_f+\alpha^2)}{2}\|\nabla w\|^2\\
&\le \alpha\|h\|^2+\frac{3\alpha R^2_1}{2}+\alpha R_1R+(K_f+\alpha^2)R_1R+\frac{\alpha(K_f+\alpha)}{2}R^2=:C_{R,R_1,h}.
\end{align*}
From Assumption (\ref{hyp_f''}), Holder inequality with $\frac{1}{2}+\frac{1}{6}+\frac{1}{3}=1$, embeddings $H^2\hookrightarrow H^1\hookrightarrow L^6(\Omega)$, \eqref{regularity-trajec-attrac-1}, and \eqref{reg-H2-33}, we have
\begin{eqnarray*}
\frac{1}{2}\int_{\Omega}f''(w)w_t|\nabla w|^2dx%&\le& C\left[\,1+\|w(t)\|^3_6\,\right]\|w_t(t)\|_6\|\nabla w(t)\|^2_6\\
%&\le&C\left[\,1+R^{3/2}\,\right]\|w_t(t)\|_6\|\Delta w(t)\|^2\\
&\le& \frac{\alpha}{2}\widetilde{\mathcal{E}}_w(t)+\mathbf{d}(t,w)\widetilde{\mathcal{E}}_w(t),
\end{eqnarray*}
where $\mathbf{d}(t,w):=C_{R}\|w_t(t)\|^6_6$.
Finally, from Young's inequality, Assumption \eqref{hyp_g'}, Holder inequality with $\frac{2}{3}+\frac{1}{3}=1$, embedding $H^1\hookrightarrow L^6(\Omega)$, and \eqref{regularity-trajec-attrac-2}, we have
\begin{eqnarray*}
-\alpha\int_{\Omega}g'(w_t)\nabla w\nabla w_tdx%&\le& \int_{\Omega}g'(w_t)|\nabla w_t|^2dx+\frac{\alpha^2}{4}\int_{\Omega}g'(w_t)|\nabla w|^2dx\\
%&\le& \int_{\Omega}g'(w_t)|\nabla w_t|^2dx+\frac{\alpha^2\kappa_1}{4}\int_{\Omega}\left[1+|w_t|^4\right]|\nabla w|^2dx\\
&\le& \int_{\Omega}g'(w_t)|\nabla w_t|^2dx+\alpha^2C'_{R_1}\|\Delta w(t)\|^2.
\end{eqnarray*}
Thus, using the last three inequalities, we conclude from \eqref{reg-H2-123} that
\begin{eqnarray}\label{reg-H2-4}
\frac{d}{dt}\widetilde{\mathcal{E}}_w(t)+\frac{\alpha}{2}\widetilde{\mathcal{E}}_w(t)+\alpha\left(\frac{1}{2}-\alpha C'_{R_1}\right)\|\Delta w(t)\|^2\le \mathbf{d}(t,w)\widetilde{\mathcal{E}}_w(t)+C_{R,R_1,h},
\end{eqnarray}
Now, choosing $\alpha$ small enough such that $\frac{1}{2}-\alpha C'_{R_1}\ge0$, it follows from \eqref{reg-H2-4} that
\begin{eqnarray}\label{reg-H2-5}
\begin{aligned}
\frac{d}{dt}\widetilde{\mathcal{E}}_w(t)+\frac{\alpha}{2}\widetilde{\mathcal{E}}_w(t)\le \mathbf{d}(t,w)\widetilde{\mathcal{E}}_w(t)+ C_{R,R_1,h}.
\end{aligned}
\end{eqnarray}
Multiplying \eqref{reg-H2-5} by the integrating factor $e^{\frac{\alpha}{2}t}$ and integrating from $s$ to $t$, we obtain
\begin{eqnarray}\label{reg-H2-6}
\begin{aligned}
\widetilde{\mathcal{E}}_w(t)e^{\frac{\alpha}{2}t}\le e^{\frac{\alpha}{2}s}\widetilde{\mathcal{E}}_w(s)+\int_s^t\mathbf{d}(\tau,w)e^{\frac{\alpha}{2}\tau}\widetilde{\mathcal{E}}_w(\tau)d\tau+ \frac{2C_{R_1,R_2,h}}{\alpha}e^{\frac{\alpha}{2}t}.
\end{aligned}
\end{eqnarray}
Using that $\mathbf{d}(\cdot,w)\in L^1(s,t)$ and \eqref{reg-H2-33}, it follows from \eqref{reg-H2-6} that
\begin{eqnarray}\label{reg-H2-7}
\begin{aligned}
\|\Delta w(t)\|^2e^{\frac{\alpha}{2}t}\le 4e^{\|\mathbf{d}\|_{L^1}}\left[\, e^{\frac{\alpha}{2}s}\widetilde{\mathcal{E}}_w(s)+\frac{ 2C_{R,R_1,h}}{\alpha}e^{\frac{\alpha}{2}t}\right].
\end{aligned}
\end{eqnarray}
Note that, from \eqref{est-8} (Lemma \ref{lemma-stat-sol}) [regularity of steady states], we have
\begin{align}\label{reg-H2-8}\widetilde{\mathcal{E}}_w(s)\le C\|\Delta u^*\|^2+\frac{1}{2}\|h\|^2\le C\varrho+\frac{1}{2}\|h\|^2=:\varrho_0.\end{align}
Thus, taking the limit in \eqref{reg-H2-7} with $s\to -\infty$, we obtain
%\begin{eqnarray*}
%\begin{aligned}
%\|\Delta w(t)\|^2e^{\frac{\alpha}{2}t}\le \frac{8C_{R,R_1,h} e^{\|\mathbf{d}\|_{L^1(s,t)}}}{\alpha}e^{\frac{\alpha}{2}t}.
%\end{aligned}
%\end{eqnarray*}

\begin{eqnarray}\label{reg-H2-9}
\begin{aligned}
\|\Delta w(t)\|^2\le \frac{8C_{R,R_1,h} e^{\|\mathbf{d}\|_{L^1(s,t)}}}{\alpha}.
\end{aligned}
\end{eqnarray}
As for \eqref{reg-H2-8}, $\widetilde{\mathcal{E}}_w(s)\le \varrho_0$ independent on $s$, we conclude from \eqref{reg-H2-9} that the full trajectory $\{W(t)=(w(t),w_t(t)):t\in \mathbb{R}\}$ has $H^2\times H^1$ regularity.

\paragraph{\bf{Estimate 2}.}
Our goal now is to show that the original trajectory  $U(t)$ coincides with $W(t)$ for $t\in (-\infty, T_0]$ for some $T_0<0$. In fact, multiplying \eqref{prob-H2-sol-2} by $z_t$ and integrating over $\Omega\times[s,t]$, we have
\begin{eqnarray}\label{est-z-1}
\begin{aligned}
&E_z(t)+\int_s^t\int_{\Omega}\left[\,g(u_t)-g(w_t)\,\right]z_tdxd\tau=E_z(s)\\
&\quad+\underbrace{K_f\int_s^t\int_{\Omega}z_tzdxd\tau}_{\mathrm{J}_1}+\underbrace{\frac{1}{2}\int_s^t\int_{\Omega}\int_0^1f''(\theta u+(1-\theta)w)[u_t+(1-\theta)w_t]d\theta |z|^2dxd\tau}_{\mathrm{J}_2}.
\end{aligned}
\end{eqnarray}
where
$$E_z(t)=\frac{1}{2}\|z_t\|^2+\frac{1}{2}\|\nabla z\|^2+\frac{1}{2}\int_{\Omega}\int_0^1f'(\theta u+(1-\theta)w)d\theta|z|^2dx+\frac{K_f}{2}\|z\|^2.$$
From embedding $H^1\hookrightarrow H^0$, we have
\begin{align*}
\left|\mathrm{J}_1\right|\le\frac{K_f}{\lambda_1^{1/2}}\int_s^TE_z(\tau)d\tau.
\end{align*}
From H\"older inequality with $\frac{2}{3}+\frac{1}{6}+\frac{1}{6}=1$, embedding $H^1\hookrightarrow L^6(\Omega)$, and uniform estimates \eqref{regularity-trajec-attrac-2}, we have
\begin{align*}
\left|\mathrm{J}_2\right|
\le C_{R,R_1}\int_s^tE_z(\tau)d\tau.
\end{align*}
Substituting $|\mathrm{J}_1|$ and $|\mathrm{J}_2|$ in \eqref{est-z-1}, we get
\begin{align}\label{est-z-4}
E_z(t)\le E_z(s)+C'_{R,R_1}\int_s^tE_z(\tau)d\tau.
\end{align}
Applying Gronwall's lemma in \eqref{est-z-4}, we obtain
\begin{align}\label{est-z-44}
E_z(t)\le e^{C'_{R_1,R_2}(t-s)}E_z(s).
\end{align}
Taking $t=s+r$, with $r>0$, from \eqref{est-z-44}, we have
\begin{align}\label{est-z-5}
E_z(s+r)\le e^{C'_{R,R_1}r}E_z(s).
\end{align}
Then, using that $\frac{1}{2}||Z(t)||^2_{\mathcal{H}}\le E_z(t)\le C||Z(t)||^2_{\mathcal{H}}$, it follows from \eqref{est-z-5} that
\begin{align}\label{est-z-7}
||Z(s+r)||^2_{\mathcal{H}}\le e^{C'_{R,R_1}r}||Z(s)||^2_{\mathcal{H}}.
\end{align}
From \eqref{est-z-7} and (\ref{prob-H2-sol-2}) , we conclude that
\begin{align*}
||Z(s+r)||^2_{\mathcal{H}}\longrightarrow_{s\rightarrow -\infty} 0.
\end{align*}
where we have used $Z(s) \rightarrow 0 , s\rightarrow -\infty $.
That is, for any $\varepsilon>0$ there exists a $T_0=s_0+r<0$ such that
\begin{align*}
||Z(t)||^2_{\mathcal{H}}<\varepsilon,\quad \forall t\le T_0.
\end{align*}
This proves that the trajectories $U(t)$ and $W(t)$ coincide for $t\in(-\infty,T_0]$.

\subsubsection*{Step 3: Forward propagation of the regularity}
Let us now consider the problem \eqref{P} with initial data $U(T_0)\in H^2\times H^1$ for $T_0<0$ as obtained in Step 2. That is,
\begin{eqnarray*}\left\{\begin{array}{l}
u_{tt} -\Delta u + g(u_t) + f(u) = h,\quad\mbox{in}\quad\Omega \times (T_0,\infty)  \\
w=0,\quad \mbox{on}\quad\Gamma,\\
(u(T_0),u_t(T_0))=(u^*_0,u^*_1)\in H^2\times H^1.
\end{array}\right.\end{eqnarray*}
Proceeding analogously to {\it Estimate 1} of Step 2, we find the following inequality (analogous to \eqref{reg-H2-7})
\begin{align*}
\|\Delta u(t)\|^2\le 4e^{\|d\|_{L^1(T_0,t)}}e^{-\varpi(t-T_0)}\widetilde{\mathcal{E}}_{u}(T_0)+4e^{\|d\|_{L^1(T_0,t)}}K_{R,R_1,h},\quad \forall t\ge T_0,
\end{align*}
where $\varpi>0$, $d(t,u)=C\|u_t\|^6_6$, and $\widetilde{\mathcal{E}}_{u}(T_0)\le C(||U(T_0)||_{\mathcal{H}_1})<\infty$.
This implies that $u$ is uniformly bounded in $H^2$ for all $t\in[T_0,\infty)$. Therefore, by combining steps 2 and 3 we can conclude that $u\in L^{\infty}(\mathbb{R};H^2)$ and there exists $R_2>0$ such that
$\|u(t)\|_{H^2}\le R_2,$ $\forall t\in \mathbb{R}.$ This concludes the proof of  maximal regularity, hence of Part {\bf(II)}.

\section{Appendix-Ultimate Dissipativity}\label{appendix}
To conclude this work, we present the proof of the existence of an absorbing set for the system $(\mathcal{H}, S_t)$, which ensures that $(\mathcal{H}, S_t)$ is ultimately dissipative.
\begin{proposition}
\label{Prop-absorbing-set}
Under the assumptions of Theorem \ref{theo-global}, the dynamical system $(\mathcal{H},S_t)$ has an absorbing set $\mathcal{B}$ in $\mathcal{H}$.
\end{proposition}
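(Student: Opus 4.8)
The plan is to reuse the perturbed energy functional $V(U) = E(U) + \epsilon\int_\Omega u_t u\,dx$ already introduced in the proof of Proposition \ref{Prop-asymp-compact}, and to show that for small $\epsilon$ it obeys a dissipative differential inequality whose forcing is bounded by a constant independent of the initial data. First I would record that $V$ is equivalent to the square of the phase-space norm: using \eqref{hyp_f2}, the definition \eqref{def-omega} of $\omega$, Poincar\'e's inequality and $|\int_\Omega u_t u|\le \tfrac{1}{2\lambda_1^{1/2}}\|U\|_{\mathcal H}^2$, one gets for $\epsilon$ small
$$\tfrac{\omega}{8}\|U\|_{\mathcal H}^2 - C_h \le V(U) \le C\|U\|_{\mathcal H}^2 + C_h,\qquad C_h := C_\nu|\Omega| + \tfrac{1}{\omega\lambda_1}\|h\|^2.$$
The starting relation is the identity \eqref{V}, rigorously justified through the energy equality \eqref{energy-equality}, namely $\tfrac{d}{dt}V(U)+\epsilon V(U)=H(U)=L(U)+M(U)$, so the whole task reduces to proving $H(U)\le C_*$ along trajectories.

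The ``good'' terms are handled by the two structural assumptions. Integrating \eqref{hyp_g'} gives $g(s)s\ge \tfrac{\kappa_0}{5}|s|^6$, hence $\int_\Omega g(u_t)u_t\ge \tfrac{\kappa_0}{5}\|u_t\|_6^6$; combined with Young's inequality $\tfrac{3\epsilon}{2}\|u_t\|^2\le \tfrac{\kappa_0}{10}\|u_t\|_6^6+C$ this yields $L(U)\le -\tfrac{\kappa_0}{10}\|u_t\|_6^6+C$. For the potential/gradient part of $M$, the dissipativity condition \eqref{hyp_f2} gives $\int_\Omega(f(u)u-F(u))\ge -\tfrac{\nu}{2\lambda_1}\|\nabla u\|^2$, so that $-\epsilon[\int_\Omega f(u)u-\int_\Omega F(u)]-\tfrac{\epsilon}{2}\|\nabla u\|^2\le -\tfrac{\epsilon\omega}{2}\|\nabla u\|^2$, while the remaining $\epsilon^2\int_\Omega u_t u$ is absorbed by a fraction of $\|\nabla u\|^2$ plus a fraction of the dissipation.

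The hard part is the supercritical cross term $-\epsilon\int_\Omega g(u_t)u\,dx$. The estimate I would use is $\|g(u_t)\|_{6/5}^{6/5}\le C\int_\Omega g(u_t)u_t+C$ (again from $g(s)s\ge \tfrac{\kappa_0}{5}|s|^6$ together with $|g(s)|\le C(|s|+|s|^5)$), so that H\"older and Young give $\epsilon|\int_\Omega g(u_t)u|\le \tfrac{\kappa_0}{20}\int_\Omega g(u_t)u_t+\tfrac{\epsilon\omega}{8}\|\nabla u\|^2+C\epsilon^6\|\nabla u\|^6+C\epsilon$. The term $C\epsilon^6\|\nabla u\|^6$ cannot be dominated by the available budget $\|u_t\|_6^6+\|\nabla u\|^2$ (a scaling check shows $a^5b\not\lesssim a^6+b^2$), so a priori boundedness of $\|\nabla u\|$ is unavoidable. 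I would therefore first establish \emph{forward boundedness}: since \eqref{energy-equality} makes $E$ nonincreasing and Proposition \ref{Prop0} shows $E$ is coercive and bounded above on bounded sets, every orbit with $\|U_0\|_{\mathcal H}\le R$ stays in $\{\|\nabla u\|\le R_0(R)\}$ for all $t\ge0$. On this invariant region I then fix $\epsilon=\epsilon(R_0)$ small enough that $C\epsilon^6R_0^4\le \tfrac{\epsilon\omega}{4}$, converting the bad term into $\tfrac{\epsilon\omega}{4}\|\nabla u\|^2$ and yielding $\tfrac{d}{dt}V+\epsilon V\le C_*$ with $C_*=O(\epsilon)$.

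Finally, Gronwall's inequality gives $V(U(t))\le V(U_0)e^{-\epsilon t}+\tfrac{C_*}{\epsilon}(1-e^{-\epsilon t})$, hence $\limsup_{t\to\infty}\|U(t)\|_{\mathcal H}^2\le \tfrac{8}{\omega}\big(\tfrac{C_*}{\epsilon}+C_h\big)$. The decisive point for a genuine, data-independent absorbing set is that $C_*/\epsilon=O(1)$ remains bounded as $\epsilon\to0$, so the $\limsup$ is dominated by a fixed radius $\rho_0^2:=\tfrac{8}{\omega}(C_1+C_h)$ determined only by $h$, $\Omega$ and the structural constants, and \emph{not} by $R$. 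Consequently the ball $\mathcal B=\{U\in\mathcal H:\|U\|_{\mathcal H}\le \rho_0+1\}$ absorbs every bounded set, which is exactly the asserted ultimate dissipativity. I expect the one genuinely delicate piece to be this final uniformity claim, namely that shrinking $\epsilon$ to accommodate larger initial data does not enlarge the absorbing radius; verifying it requires tracking the $\epsilon$-dependence of every constant produced above.
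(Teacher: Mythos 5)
Your architecture is sound and genuinely different from the paper's own proof, so it is worth comparing the two. The paper works with the \emph{unperturbed} shifted energy $\mathcal{J}(U)=E(U)+L$, tests the equation with $u$ only over a unit time window $[t_1,t_2]$ whose endpoints are selected by the mean value theorem so that $\|u_t(t_j)\|^2$ is controlled by the dissipation $\mathcal{D}(t)=\int_t^{t+1}\int_\Omega g(u_t)u_t$, converts the $L^6$-superlinearity of the damping into the fractional powers $\mathcal{D}(t)^{1/3},\mathcal{D}(t)^{5/3}$, and arrives at the cubic difference inequality $\mathcal{J}(U(t))^{3}\le C_0\left[\mathcal{J}(U(t))-\mathcal{J}(U(t+1))\right]+[2^{8/3}L]^{3}$, resolved by Nakao's lemma \cite{Nakao-06}. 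There the set-dependent constant $C_0$ enters only the (algebraic, $t^{-1/2}$) transient, never the limit radius $2^{14/3}L/\omega$, so no parameter has to be tuned to the bounded set. Your route instead uses the perturbed functional $V=E+\epsilon\int_\Omega u_tu$ and Gronwall, and the superlinear damping then forces exactly the complication you identified: the cross term $\epsilon\int_\Omega g(u_t)u$ produces a $\|\nabla u\|^6$ contribution that can only be absorbed after an a priori forward bound $\|\nabla u\|\le R_0(R)$ (correctly obtained from the energy equality \eqref{energy-equality} plus coercivity, as in Proposition \ref{Prop0}) and a set-dependent choice of $\epsilon$, after which uniformity of the radius must be checked by hand. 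Both proofs yield the same conclusion; the paper's buys an explicit, tuning-free radius, yours buys an exponential (though rate-degrading in $R$) entry into the ball via a more classical Lyapunov-perturbation argument.

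There is, however, one genuine flaw in the derivation at precisely the point you flag as delicate. The estimate you propose for the cross term cannot be obtained the way you state it: starting from the \emph{global} bound $\|g(u_t)\|_{6/5}^{6/5}\le C_1\int_\Omega g(u_t)u_t+C_2$ and applying H\"older--Young, the Young weight $\eta$ multiplying $\|g(u_t)\|_{6/5}^{6/5}$ must be a \emph{fixed} constant (of size $\sim\kappa_0/C_1$) in order to keep the dissipation coefficient at $\kappa_0/20$; but then the same $\eta$ multiplies $C_2$, so the additive constant is $O(1)$, not $C\epsilon$. (Making $\eta\lesssim\epsilon$ to cure this inflates the gradient term to $O(\epsilon)\|\nabla u\|^6$, which can no longer be absorbed by $\tfrac{\epsilon\omega}{4}\|\nabla u\|^2$ for large $R_0$.) An $O(1)$ additive constant gives $C_*/\epsilon\to\infty$ as $\epsilon\to0$, i.e.\ an ``absorbing'' radius growing with $R$ --- exactly the failure mode you were worried about. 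The fix is to split the damping integral by the size of the velocity: on $\{|u_t|\le 1\}$ the damping is bounded, $|g(u_t)|\le \kappa_1(1+1)$, so
\begin{equation*}
\epsilon\int_{\{|u_t|\le1\}}|g(u_t)||u|\,dx\le C\epsilon\|\nabla u\|\le \tfrac{\epsilon\omega}{16}\|\nabla u\|^2+C\epsilon,
\end{equation*}
while on $\{|u_t|>1\}$ one has $|g(u_t)|^{6/5}\le C|u_t|^6\le \tfrac{5C}{\kappa_0}\,g(u_t)u_t$ \emph{with no additive constant}, so H\"older--Young with a fixed weight gives $\tfrac{\kappa_0}{20}\int_\Omega g(u_t)u_t+C\epsilon^6\|\nabla u\|^6$ and nothing else. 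With this splitting every additive contribution to $C_*$ is $O(\epsilon)$ (in fact $O(\epsilon^{3/2})$ for the kinetic terms), with constants independent of $R$ and $\epsilon$, and your uniformity claim --- hence the whole proof --- goes through.
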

\begin{proof}
Let $B$ be a bounded subset in $\mathcal{H}$ and let $U(t)=S_tU_0$ be a weak solution of problem \eqref{P} with $U_0\in \mathcal{H}$. In order, let us consider the modified functional
$ \mathcal{J}(U(t)):=E(U(t))+L,$
where $L:=\frac{1}{\omega\lambda_1}\|h\|^2+C_{\nu}|\Omega|$ and $E(U(t))$ is defined in \eqref{functional-energy}. Hence
%By argument similar to those used in \eqref{equiv_E}, we obtain
\begin{align}\label{abs-0}\mathcal{J}(U(t))\ge \frac{\omega}{4} ||U(t)||^2_{\mathcal{H}}.\end{align}
Using the energy equality \eqref{energy-equality} with $s=t$ and $t=t+1$, we obtain
\begin{align}\label{absorbing2}
\int_{t}^{t+1}\int_{\Omega}g(u_t)u_tdxds=E(U(t))-E(U(t+1))=\mathcal{J}(U(t))-\mathcal{J}(U(t+1))=:\mathcal{D}(t).
\end{align}
From Assumption \eqref{hyp_g'}, and 
%\begin{eqnarray}\label{absorbing2}
%	\int_t^{t+1}\|u_t(s)\|^{6}_{6}ds\le \frac{1}{\kappa_0}\mathcal{D}(t).
%\end{eqnarray}
and  H\"older inequality with $\frac{2}{3}+\frac{1}{3}=1$, it follows from \eqref{absorbing2} that
\begin{align}\label{absorbing3}
\int_t^{t+1}\|u_t(s)\|^2ds\le |\Omega|^{2/3}\left(\int_t^{t+1}\|u_t(s)\|^{6}_{6}ds\right)^{1/3}\le \frac{|\Omega|^{2/3}}{\kappa_0^{1/3}}\mathcal{D}(t)^{1/3}.
\end{align}
Furthermore, using Mean Value Theorem (MVT) and \eqref{absorbing3} there exist $t_1\in[t,t+\frac{1}{4}]$ and $t_2\in[t+\frac{3}{4},t+1]$ such that
\begin{align}\label{absorbing3'}
\|u_t(t_j)\|^2\le 4\int_t^{t+1}\|u_t(s)\|^2ds\le \frac{4|\Omega|^{2/3}}{\kappa_0^{1/3}}\mathcal{D}(t)^{1/3},\quad j=1,2.
\end{align}
Next, multiplying Eq. (\ref{P}) by $u(t)$ and integrating over $[t_1,t_2]\times \Omega$, we have
\begin{eqnarray}\label{D}
\begin{aligned}
&\int_{t_1}^{t_2}E(s)ds+\frac{1}{2}\int_{t_1}^{t_2}\|\nabla u(s)\|^2ds
=\;\frac{3}{2}\int_{t_1}^{t_2}\|u_t(s)\|^2ds-\int_{t_1}^{t_2}\int_{\Omega}g(u_t)udxds\\
&\;+\int_{\Omega}u(t_1)u_t(t_1)dx-\int_{\Omega}u(t_2)u_t(t_2)dx
\;+\int_{t_1}^{t_2}\left[\int_{\Omega}F(u)dx-\int_{\Omega}f(u)udx\right]ds.
\end{aligned}
\end{eqnarray}
From Assumption \eqref{hyp_f2}, we get
\begin{align}\label{abs-1}
\int_{t_1}^{t_2}\left[\int_{\Omega}F(u)dx-\int_{\Omega}f(u)udx\,\right]ds\le \frac{\nu}{2}\int_{t_1}^{t_2}\int_{\Omega}|u|^2dxds\le \frac{\nu}{2\lambda_1}\int_{t_1}^{t_2}\|\nabla u(s)\|^2ds.
\end{align}
Then, taking the sum of $L$ on both sides of \eqref{D} and, using the definition of $\mathcal{J}(U(t))$ and \eqref{abs-1}, we get
\begin{eqnarray}\label{DD}
\begin{aligned}
\int_{t_1}^{t_2}\mathcal{J}(U(s))ds+\frac{\omega}{2}\int_{t_1}^{t_2}\|\nabla u(s)\|^2ds
\le &\;\frac{3}{2}\int_{t_1}^{t_2}\|u_t(s)\|^2ds-\underbrace{\int_{t_1}^{t_2}\int_{\Omega}g(u_t)udxds}_{X_1}\\
&\;+\underbrace{\int_{\Omega}u(t_1)u_t(t_1)dx-\int_{\Omega}u(t_2)u_t(t_2)dx}_{X_2}+L.
\end{aligned}
\end{eqnarray}
Using Assumption \eqref{hyp_g'}, H\"older and Young inequalities, embeddings $H^1\hookrightarrow L^{6}(\Omega)\hookrightarrow H^0$, \eqref{abs-0}, and \eqref{absorbing3}, we have
\begin{eqnarray*}
X_1%\le \kappa_1\int_{t_1}^{t_2}\int_{\Omega}(1+|u_t|^{4})|u_t||u|dxds
%&\le&\kappa_1\int_{t_1}^{t_2}\int_{\Omega}|u_t||u|dxds+ \kappa_1\int_{t_1}^{t_2}\int_{\Omega}|u_t|^{5}|u|dxds\\
&\le&\kappa_1\left[\left(\int_{t}^{t+1}\|u_t\|^2ds\right)^{1/2}\left(\int_t^{t+1}\|u\|^2ds\right)^{1/2}+\left(\int_{t_1}^{t_2}\|u_t\|^{6}_6ds\right)^{5/6}\left(\int_t^{t+1}\|u\|^6_{6}ds\right)^{1/6}\right]\\
&\le&\frac{\kappa_1}{\lambda_1^{1/2}}\left[\left(\int_{t}^{t+1}\|u_t\|^2ds\right)^{1/2}\left(\int_t^{t+1}\|\nabla u\|^2ds\right)^{1/2}
+\left(\int_{t_1}^{t_2}\|u_t\|^{6}_6ds\right)^{5/6}\left(\int_t^{t+1}\|\nabla u\|^6ds\right)^{1/6}\right]\\
&\le&\frac{2\kappa_1}{(\omega\lambda_1)^{1/2}}\left[\left(\frac{|\Omega|^{2/3}}{\kappa_0^{1/3}}\mathcal{D}(t)^{1/3}\right)^{1/2}\sup_{t\le s\le t+1}\mathcal{J}(U(s))^{1/2}
+\left(\frac{1}{\kappa_0}\mathcal{D}(t)\right)^{5/6}\sup_{t\le s\le t+1}\mathcal{J}(U(s))^{1/2}\right]\\
&\le&\frac{2\kappa_1^2|\Omega|^{2/3}}{\varepsilon\omega\lambda_1\kappa_0^{1/3}}\mathcal{D}(t)^{1/3}+ \frac{2\kappa_1}{\varepsilon\omega\lambda\kappa_0^{5/3}}\mathcal{D}(t)^{5/3}+\varepsilon\sup_{t\le s\le t+1}\mathcal{J}(U(s)).
\end{eqnarray*}
From embedding $H^1\hookrightarrow H^0$ and \eqref{absorbing3'}, we get
\begin{align*}
X_2\le \|u(t_1)\|\|u_t(t_1)\|+\|u(t_2)\|\|u_t(t_2)\|%\le\frac{1}{\lambda_1^{1/2}}\left[\,\|\nabla u(t_1)\|\|u_t(t_1)\|+\|\nabla u(t_2)\|\|u_t(t_2)\|\,\right]
%\le  \frac{4|\Omega|^{1/3}}{\kappa_0^{1/6}\lambda_1^{1/2}}\mathcal{D}(t)^{1/6}\sup_{t\le s\le t+1}\|\nabla u(s)\|\\
%&\le & \frac{8|\Omega|^{1/3}}{\omega^{1/2}\kappa_0^{1/6}\lambda_1^{1/2}}\mathcal{D}(t)^{1/6}\sup_{t\le s\le t+1}\mathcal{J}(U(t))^{1/2}\\
\le  \frac{16|\Omega|^{2/3}}{\varepsilon\omega\kappa_0^{1/3}\lambda_1}\mathcal{D}(t)^{1/3}+\varepsilon \sup_{t\le s\le t+1}\mathcal{J}(U(s)).
\end{align*}
Returning to \eqref{D}, we obtain
\begin{align}\label{D'}
\int_{t_1}^{t_2}\mathcal{J}(U(s))ds
\le C_0\mathcal{D}(t)^{1/3}+C_1\mathcal{D}(t)^{5/3}+2\varepsilon\sup_{t\le s\le t+1}\mathcal{J}(U(s))+L,
\end{align}
where $C_{0}:=\frac{|\omega|^{2/3}}{\kappa_0^{1/3}}+\frac{2\kappa_1^2|\Omega|^{2/3}}{\varepsilon\omega\lambda_1\kappa_0^{1/3}}+\frac{16|\Omega|^{2/3}}{\varepsilon\omega\kappa_0^{1/3}\lambda_1}$, $C_1=\frac{2\kappa_1}{\varepsilon\omega\lambda\kappa_0^{5/3}}$.

Using that $\mathcal{J}$ is decreasing and MVT for integrals, there exists $\tau\in [t_1,t_2]$ such that
\begin{align}\label{absorbing}
\mathcal{J}(U(t+1))\le	\mathcal{J}(U(\tau))=\frac{1}{t_2-t_1}\int_{t_1}^{t_2}\mathcal{J}(U(s))ds\le 2\int_{t_1}^{t_2}\mathcal{J}(U(s))ds.\end{align}
Hence, from \eqref{absorbing2}, \eqref{absorbing}, and \eqref{D'}, we get
\begin{eqnarray*}
\mathcal{J}(U(t))&=&\mathcal{J}(U(t+1))+\mathcal{D}(t)	\le 2\int_{t_1}^{t_2}\mathcal{J}(U(s))ds+\mathcal{D}(t)\\
&\le&2C_0\mathcal{D}(t)^{1/3}+2C_1\mathcal{D}(t)^{5/3}+\mathcal{D}(t)+4\varepsilon\sup_{t\le s\le t+1}\mathcal{J}(U(s))+2L.
\end{eqnarray*}
Choosing $\varepsilon=\frac{1}{8}$ and using $\mathcal{J}(U(t))=\sup_{t\le s\le t+1}\mathcal{J}(U(s))$, we obtain
%\begin{eqnarray*}
%\mathcal{J}(U(t))\le \left[\,4C_{0}+4C_1\mathcal{D}(t)^{4/3}+ 2\mathcal{D}(t)^{2/3}\,\right]\mathcal{D}(t)^{1/3}+4L.
%\end{eqnarray*}
%From the last inequality and using that $(a+b)^{3}\le 4(a^{3}+b^{3})$, we have
\begin{align}\label{ee}
\mathcal{J}(U(t))^{3}\le 4\left[\,4C_{0}+4C_1\mathcal{D}(t)^{4/3}+ 2\mathcal{D}(t)^{2/3}\,\right]^{3}\left[\,\mathcal{J}(U(t))-\mathcal{J}(U(t+1))\,\right]+[2^{8/3}L]^{3}.
\end{align}
Using the definition of $\mathcal{D}(t)$ and the energy equality \eqref{energy-equality} we obtain that $|\mathcal{D}(t)|\le 2|E(U_0)|$. Hence,  $$C_0:=\sup_{U_0\in B}4\left[\,4C_{0}+4C_1\mathcal{D}(t)^{4/3}+ 2\mathcal{D}(t)^{2/3}\,\right]^{3}<\infty.$$
Thus, \eqref{ee} can be rewritten as
\begin{align}\label{abs-x}
\mathcal{J}(U(t))^{3}\le C_0\left[\,\mathcal{J}(U(t))-\mathcal{J}(U(t+1))\,\right]+\left[2^{8/3}L\right]^{3}.
\end{align}
Applying Nakao's lemma \cite[Lemma 2.1]{Nakao-06} to \eqref{abs-x}, we conclude that
\begin{align}\label{abs-x2}
\mathcal{J}(U(t))\le \left(2C_0^{-1}(t-1)^++\mathcal{J}(U(0))^{-2}\right)^{-\frac{1}{2}}+2^{8/3}L.
\end{align}
Using \eqref{abs-0}, it follows from \eqref{abs-x2} that
\begin{align}\label{abs-xx}
\lim\sup_{t\rightarrow \infty} 	||U(t)||^2_{\mathcal{H}}\le  \lim\sup_{t\rightarrow \infty} \frac{4}{\omega}\left(2C_0^{-1}(t-1)^++\mathcal{J}(U(0))^{-2}\right)^{-\frac{1}{2}}+\frac{2^{14/3}L}{\omega}  \leq \frac{2^{14/3}L}{\omega}.
\end{align}
%and
%\begin{align}\label{abs-xxx}\lim\sup_{t\rightarrow +\infty}||U(t)||^2_{\mathcal{H}}\le \frac{2^{14/3}L}{\omega}.
%\end{align}
Choosing $R^2>\frac{2^{14/3}L}{\omega}$, we conclude, from \eqref{abs-xx} %and \eqref{abs-xxx} 
that the ball $\mathcal{B}=\left\{U\in \mathcal{H}|\,||U||_{\mathcal{H}}<R\right\}$
is an absorbing set in $\mathcal{H}$, for the dynamical system $(\mathcal{H},S_t)$.
\end{proof}
\ifdefined\xxxxx
\begin{remark}
In the particular case, when $h\equiv 0$ and $C_{\nu}=0$ in \eqref{hyp_f2} we have that $\mathcal{J}(U(t))=E(U(t))$ and $L=0$. Thus, in \eqref{abs-xxx} we obtain that
\begin{align*}\lim\sup_{t\rightarrow +\infty}||U(t)||^2_{\mathcal{H}}\equiv 0.
\end{align*}
That is, the dynamical system $(\mathcal{H},S_t)$ possesses, in its natural topological phase space $\mathcal{H}$ a compact global attractor $\mathfrak{A}=\{(0,0)\in \mathcal{H}\}$ (the set of a single point).
\end{remark}
\fi

\ifdefined\xxxxxx
\noindent{\it Proof.}{\bf (III)}
Let $\mathfrak{B}$ be the absorbing set obtained in the Proposition \ref{Prop-absorbing-set}. From Proposition \ref{stabilizability-estimate}, the system $(\mathcal{H},S_t)$ is quasi-stable on $\mathfrak{B}$. Now, let us consider the spaces $\mathcal{H}_{-s}=H^{-s+1}\times H^{-s}$ with $0<s\le 1$.
Next, we consider the mapping $t\in[0,T]\mapsto S_tz\in \mathcal{H}_{-1}=H^0\times H^{-1}$. Let $U(t) = S_tz=z(t)$ be a weak solution with initial data $z\in \mathfrak{B}$. It follows from Theorem \ref{theo-global} that $U_t$ has regularity $U_t=(u_t,u_{tt})\in L^{6/5}(0,T;\mathcal{H}_{-1})$. Then, we have
\begin{eqnarray*}
||S_{t_1}z-S_{t_2}z||_{\mathcal{H}_{-1}}&\le& \int_{t_1}^{t_2}\left\|\frac{d}{dt}z(s)\right\|_{\mathcal{H}_{-1}}ds\le \left(\int_{t_1}^{t_2}\|(u_t(s),u_{tt}(s))\|_{\mathcal{H}_{-1}}^{6/5}ds\right)^{5/6}|t_2-t_1|^{1/6}\\
&\le& C_{\mathfrak{B}}|t_2-t_1|^{1/6},\quad 0\le t_1\le t_2\le T.
\end{eqnarray*}
This  shows that for each $z\in \mathfrak{B}$, the map $t\mapsto S_tz$ is H\"older continuous in the extended space $\mathcal{H}_{-1}$ with exponent $\gamma=1/6$. Now, we assume that $0<s<1$. Then, using that $\mathcal{H}_{0}\hookrightarrow \mathcal{H}_{-s}\hookrightarrow \mathcal{H}_{-1}$, applying interpolation theorem in each component of $\mathcal{H}_{-s}$ and using the H\"older continuity in $\mathcal{H}_{-1}$, we obtain
\begin{eqnarray*}
||S_{t_1}z-S_{t_2}z||_{\mathcal{H}_{-s}}&\le& C_s||S_{t_1}z-S_{t_2}z||^{1-s}_{\mathcal{H}}||S_{t_1}z-S_{t_2}z||^{s}_{\mathcal{H}_{-1}}\\
&\le& C_{\mathfrak{B}}|t_2-t_1|^{s/6},\quad \mbox{with}\quad s\in (0,1].
\end{eqnarray*}
This shows that $t\mapsto S_tz$ is H\"older continuous in the extended spaces $\mathcal{H}_{-s}$ for $s\in (0,1]$. Therefore, the existence of a generalized exponential attractor in $\mathcal{H}_{-s}$ with $0<s\le 1$, follows from Theorem \ref{Theo-CL-5}.
\qed

\medskip
In what follows, we will prove Proposition \ref{stabilizability-estimate} which guarantees that the system $(\mathcal{H},S_t)$ is quasi-stable.
\subsection{Quasi-stability property}
\begin{remark}
Using that $g\in C^1(\mathbb{R})$ and $g'(0)>0$, then there exists a constant $\eta>0$ such that
$g'(s)>0$ for all $|s|<2\eta$. And since $g'>0$ in $\mathbb{R}$, there exists a constant $m>0$ such that
$g'(s)\ge m> 0$ for all $|s|\le \eta$. On the other hand, from Assumption \eqref{hyp_g'}, we have
$g'(s)\ge \kappa_0|s|^{4}\ge \kappa_0\eta^4>0$ for all $|s|\ge \eta$.
Hence, choosing $\kappa_2=\inf\{m,\kappa_0\eta^4\}$, we infer
\begin{align}\label{hyp-g'-2}g'(s)\ge \kappa_2,\quad \mbox{for all}\quad s\in \mathbb{R}.\end{align}
Therefore, from \eqref{hyp-g'-2} and using the same arguments used in \cite[Remark 4:1]{EECT-2017} with $\gamma=4$, we obtain
\begin{align}
\label{hyp-g'-22}
\left[\,g(r)-g(s)\,\right](r-s)\ge \kappa_2|s-r|^2+\frac{\kappa_0}{10}\left(|r|^4+|s|^4\right)|r-s|^2.
\end{align}
\end{remark}
\begin{proposition}\label{stabilizability-estimate} Let $(u^1(t),u^1_t(t))=S_t(u^1_0,u^1_1)$ and $ (u^2(t), u^2_t(t))= S_t(u^2_0,u^2_t)$ be two mild solutions to \eqref{P} with initial data $U^1_0=(u^1_0,u^1_1), U^2_0=(u^2_0,u^2_1)$ lying in a bounded set $B\subset \mathcal{H}$. Let the hypotheses of Theorem \ref{theo-global} be valid. In addition we assume that $g'(0)>0$.  Let $z(t)=u^1(t)-u^2(t).$ Then we have the following relation
\begin{eqnarray}\label{inequality-main}
||S_tU^1_0-S_tU^2_0||^2_{\mathcal{H}}\le b(t)||U^1_0-U^2_0||^2_{\mathcal{H}}+c(t)\sup_{s\in [0,t]}\|u^1(s)-u^2(s)\|^2,
\end{eqnarray}
where $b(t)$ and $c (t)$ are nonnegative scalar functions satisfying  $b\in L^1(\mathbb{R}^+)$ with  $\displaystyle\lim_{t\rightarrow +\infty}b(t)=0$ and $c(t)$ is
%locally
bounded on $[0,\infty]$.
\end{proposition}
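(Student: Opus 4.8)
The plan is to establish the quasi-stability inequality by the energy method of Chueshov and Lasiecka, with the critical source and damping handled through the enhanced dissipation. The starting point is a uniform-in-time coercivity of the damping: since $g'(0)>0$ and $g'(s)\ge\kappa_0|s|^4$ from \eqref{hyp_g'}, there is a constant $\kappa_2>0$ with $g'(s)\ge\kappa_2$ for all $s\in\mathbb{R}$, which upgrades monotonicity to
\[
[g(r)-g(s)](r-s)\ge\kappa_2|r-s|^2+\frac{\kappa_0}{10}\left(|r|^4+|s|^4\right)|r-s|^2.
\]
This serves two purposes at once: the linear coercive part absorbs the lower-order cross terms, while the quartic weight tames the critical \emph{difference} of the quintic damping. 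I would write the difference equation for $z=u^1-u^2$ and introduce the modified energy $E_z$ carrying the $f'$-correction together with a $\tfrac{K_f}{2}\|z\|^2$ term, with $K_f$ as in \eqref{ff}, chosen so that $E_z$ is equivalent to $\|U^1-U^2\|_{\mathcal{H}}^2$ exactly as in \eqref{est2-7}.

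Next I would run two multiplier computations. Testing the energy identity \eqref{energy-equality} for the difference and integrating produces a balance between $E_z(T)-E_z(\tau)$, the dissipation integral $\int\!\int D(z_t)$ with $D(z_t)=[g(u^1_t)-g(u^2_t)]z_t$, and two error terms: one from the $K_f$-correction, absorbed by Young's inequality and the $\kappa_2$-coercivity, and a genuinely critical one coming from $f''$. For the latter I would use \eqref{hyp_f''}, the Hölder split $\tfrac12+\tfrac16+\tfrac13=1$ and $H^1\hookrightarrow L^6$ to bound it by $\delta\int E_z+C_{B,\delta}\int d(s)E_z$, where $d(s)=\|u^1_t\|_6^6+\|u^2_t\|_6^6$. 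The second computation multiplies the difference equation by $z$ and integrates, giving an equipartition-type relation that expresses $\int_0^T E_z$ in terms of $E_z(T)$, $\int\!\int D(z_t)$, $\int\|z\|^2$, and the damping difference $\int\!\int[g(u^1_t)-g(u^2_t)]z$; this last term I would control using the quartic weight in the strong monotonicity, so that its $|u^i_t|^4$-growth is reabsorbed into $\int\!\int D(z_t)$ plus a $d(s)E_z$ contribution.

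Combining the two relations, then choosing $T$ large and $\delta$ small, I would reach a recursion $E_z((m+1)T)\le\gamma E_z(mT)+C_B b_m$ with $0<\gamma<1$, where $b_m$ collects $\sup_{[mT,(m+1)T]}\|z\|^2$ and $\int_{mT}^{(m+1)T}d(s)E_z\,ds$; bounding $\int\!\int D(z_t)$ back by $E_z(0)-E_z(T)$ via the same energy identity (with $\tau=0$) is what closes the contraction. Iterating and passing from the discrete to the continuous estimate yields
\[
E_z(t)\le C_1e^{-\varrho t}E_z(0)+C_2\Big[\sup_{0\le s\le t}\|z(s)\|^2+\int_0^t e^{-\varrho(t-s)}d(s)E_z(s)\,ds\Big],
\]
and a final Gronwall argument removes the last integral, producing the claimed form with $b(t)=2C_BC_1e^{-\varrho t}e^{C_2\int_0^t d}$ and $c(t)=2C_2e^{C_2\int_0^t d}$.

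The main obstacle, and precisely where the enhanced dissipation is indispensable, is the control of the critical weights $d(s)=\|u^1_t\|_6^6+\|u^2_t\|_6^6$: the argument needs $\int_0^\infty d(s)\,ds<\infty$ \emph{uniformly} over initial data in the bounded set $B$. This follows from the a priori estimate \eqref{Estimate_I0}, where the quintic damping supplies exactly the $\int\|u_t\|_6^6$ term. That uniform $L^1$ bound is what keeps $e^{C_2\int_0^t d}$ bounded (hence $c(t)$ bounded on $\mathbb{R}^+$) and lets the exponential factor $e^{-\varrho t}$ dominate, giving $b\in L^1(\mathbb{R}^+)$ with $b(t)\to0$ as $t\to+\infty$.
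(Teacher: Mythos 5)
Your proposal is correct and follows essentially the same route as the paper's proof: the pointwise bound $g'(s)\ge\kappa_2$ derived from $g'(0)>0$, the modified energy $E_z$ with the $K_f$-correction, the two multiplier computations (energy identity for the difference plus the $z$-multiplier equipartition relation), the recursion $E_z((m+1)T)\le\gamma E_z(mT)+C_Bb_m$, and the final Gronwall step yielding the stated $b(t)$ and $c(t)$, with the uniform $L^1$ bound on $d(s)=\|u^1_t\|_6^6+\|u^2_t\|_6^6$ coming from the enhanced-dissipation a priori estimate. There is nothing substantive to add or correct.
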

\begin{proof}
Let $z=u^1-u^2.$ Then $z$ satisfies the equation
\begin{eqnarray}\left\{\begin{array}{l}\label{diff-weak-solution}\displaystyle{
z_{tt}-\Delta z+\left[\,g(u^1_t)-g(u^2_t)\,\right]+\left[\,f(u^1)-f(u^2)\,\right]=0,\quad \mbox{in}\quad \Omega\times [0,\infty),}\\
\displaystyle{ (z(0),z_t(0))=(u_{0}-v_{0},u_{1}-v_{1}),\quad z=0,\quad \mbox{on}\quad \Gamma.}\end{array}\right.
\end{eqnarray}
Using the energy identity \eqref{energy-equality} for the difference of two weak solutions, we have
\begin{eqnarray}
\begin{aligned}
\label{ASP-1}
\frac{d}{dt}E_z(t)+\int_{\Omega}D(z_t)dx=&\;\textcolor{magenta}{K_f\int_{\Omega}zz_tdx}\\
&+\frac{1}{2}\int_{\Omega}\int_0^1f''(\theta u^1+(1-\theta)u^2)\left[\theta u^1_t+(1-\theta)u^2_t\right]d\theta|z|^2dx.
\end{aligned}
\end{eqnarray}
where
$$D(z_t):=\left[g(u^1_t)-g(u^2_t)\right]z_t$$
and
$$E_z:=\frac{1}{2}||S_tU^1_0-S_tU^2_0||^2_{\mathcal{H}}+\frac{1}{2}\int_{\Omega}\int_0^1f'(\theta u^1+(1-\theta)u^2)d\theta|z|^2dx+\textcolor{magenta}{\frac{K_f}{2}\|z\|^2,}$$
\textcolor{blue}{with the constant $K_f$  obtained from dissipativity condition \eqref{hyp-inf-f} as in \eqref{ff}.}

Integrating \eqref{ASP-1} from $\tau\ge 0$ to $T$, we get
\begin{eqnarray}
\begin{aligned}
\label{ASP-11}
&E_z(T)-E_z(\tau)+\int_{\tau}^T\int_{\Omega}D(z_t(s))dxds\\
&=\textcolor{magenta}{\underbrace{K_f\int_{\tau}^T\int_{\Omega}zz_tds}_{J_1}}+\underbrace{\frac{1}{2}\int_{\tau}^T\int_{\Omega}\int_0^1f''(\theta u^1+(1-\theta)u^2)\left[\theta u^1_t+(1-\theta)u^2_t\right]d\theta|z|^2dxds}_{J_2}.
\end{aligned}
\end{eqnarray}\textcolor{magenta}{
Using H\"older inequality, \eqref{hyp-g'-22}, and Young's inequality, we have
\begin{eqnarray}
\label{j1}
\begin{aligned}
|J_1|\le &\;K_f\int_{\tau}^T\|z\|\|z_t\|ds\le \frac{K_f}{\kappa_2^{1/2}}\left(\int_{\tau}^T\|z\|^2ds\right)^{1/2}\left(\int_{\tau}^T\int_{\Omega}D(z_t(s))dxds\right)^{1/2}\\
\le &\;\frac{K_f^2}{2\kappa_2}\int_{\tau}^T\|z\|^2ds+\frac{1}{2}\int_{\tau}^T\int_{\Omega}D(z_t(s))dxds.
\end{aligned} \end{eqnarray}}
From Assumption \eqref{hyp_f''}, H\"older inequality with $\frac{1}{2}+\frac{1}{6}+\frac{1}{3}=1$, embedding $H^1\hookrightarrow L^6(\Omega)$, and Young's inequality, we have
\begin{eqnarray}\label{j2}
\begin{aligned}
|J_2|\le& \;C\int_{\tau}^T\left[\,1+\|u^1\|^3_6+\|u^2\|^3_{6}\,\right]\left[\,\|u^1_t\|_6+\|u^2_t\|_6\,\right]\|z\|_6^2ds\\
\le&\; C_B\int_{\tau}^T\left[\,\|u^1_t\|_6+\|u^2_t\|_6\,\right]\|\nabla z\|^2ds\\
\le&\; \delta\int_{\tau}^TE_z(s)ds+ C_{B,\delta}\int_{\tau}^Td(s,u^1,u^2)E_z(s)ds,
\end{aligned}
\end{eqnarray}
where
$$d(t,u^1,u^2):=\|u^1_t(t)\|^6_6+\|u^2_t(t)\|^6_6.$$
Substituting \eqref{j1} and \eqref{j2} into \eqref{ASP-11}, we find
\begin{eqnarray}
\begin{aligned}
\label{ASP-111}
E_z(T)-E_z(\tau)+\frac{1}{2}\int_{\tau}^T\int_{\Omega}D(z_t(s))dxds
\le &\;\textcolor{magenta}{\frac{K_f^2}{2\kappa_2}\int_{\tau}^T\|z\|^2ds}+\delta\int_{\tau}^TE_z(s)ds\\
&+ C_{B,\delta}\int_{\tau}^Td(s,u^1,u^2)E_z(s)ds.
\end{aligned}
\end{eqnarray}
Now, integrating \eqref{ASP-111} from $0$ to $T$, we obtain
\begin{eqnarray}
\begin{aligned}
\label{ASP-1111}
TE_z(T)+\frac{1}{2}\int_0^T\int_{\tau}^T\int_{\Omega}D(z_t(s))dxdsd\tau
\le&\;\int_0^TE(\tau)d\tau +\textcolor{magenta}{\frac{K_f^2}{2\kappa_2}\int_0^T\int_{\tau}^T\|z\|^2dsd\tau}\\
&+\delta T\int_{0}^TE_z(s)ds+ TC_{B,\delta}\int_0^Td(s,u^1,u^2)E_z(s)ds.
\end{aligned}
\end{eqnarray}
Next, multiplying Eq. \eqref{diff-weak-solution} by $z$ and integrating over $\Omega\times [0,T]$, we have
\begin{eqnarray}
\label{ASP-2}
\begin{aligned}
2\int_0^TE_z(s)ds=&\;2\int_0^T\|z_t\|^2ds+\textcolor{magenta}{K_f\int_0^T\|z\|^2ds}\\
&\;-\underbrace{\left[\int_{\Omega}z_t(T)z(T)dx-\int_{\Omega}z_t(0)z(0)dx\right]}_{J_3}-\underbrace{\int_0^T\int_{\Omega}\left[g(u^1_t)-g(u^2_t)\right]zdxds}_{J_4}.
\end{aligned}
\end{eqnarray}
From \eqref{hyp-g'-22}, we get
\begin{eqnarray}\label{j0}
2\int_0^T\|z_t\|^2ds\le \frac{2}{\kappa_2} \int_0^T\int_{\Omega}D(z_t(s))dxds.
\end{eqnarray}
From embedding $H^1\hookrightarrow H^0$ and using inequality \eqref{est2-7}, we have
\begin{align}
\label{Asymptotic-2}
|-J_3|\le \frac{1}{\lambda_1^{1/2}}\left[\,E_z(T)+E_z(0)\,\right].
\end{align}
\textcolor{magenta}{
Still, using equality \eqref{ASP-11} and estimates \eqref{j1} and \eqref{j2} (with $\tau=0$) and choosing $\delta=\frac{\lambda_1^{1/2}}{2}$, we obtain
\begin{eqnarray}
\label{Asymptotic-22}
\begin{aligned}
E_z(0)\le&\; E_z(T)+\frac{3}{2}\int_0^T\int_{\Omega}D(z_t(s))dxds+\frac{K^2_f}{2\kappa_2}\int_0^T\|z\|^2ds\\
&+\frac{\lambda_1^{1/2}}{2}\int_{0}^TE_z(s)ds+C_{B}\int_{0}^Td(s,u^1,u^2)E_z(s)ds.
\end{aligned}
\end{eqnarray}}
Then, substituting \eqref{Asymptotic-22} in \eqref{Asymptotic-2}, we obtain
\begin{eqnarray}
\label{j3}
\begin{aligned}
|-J_3|\le&\; \frac{2}{\lambda_1^{1/2}}E_z(T)+\frac{3}{2\lambda_1^{1/2}}\int_0^T\int_{\Omega}D(z_t(s))dxds+\textcolor{magenta}{\frac{K^2_f}{2\kappa_2\lambda_1^{1/2}}\int_0^T\|z\|^2ds}\\
&+\frac{1}{2}\int_0^TE_z(s)ds+C_{B}\int_{0}^Td(s,u^1,u^2)E_z(s)ds.
\end{aligned}
\end{eqnarray}
From Assumption \eqref{hyp_g'} and \eqref{hyp-g'-22}, we have
\begin{eqnarray}\label{j4}
\begin{aligned}
|-J_4|=&\;\left|-\int_0^T\int_{\Omega}\int_0^1g'(\theta u^1_t+(1-\theta)u^2_t)d\theta z_tzdxds\right|\\	
\le&\;\kappa_1\int_0^T\int_{\Omega}\int_0^1\left[\,1+|\theta u^1_t+(1-\theta)u^2_t|^4\,\right]d\theta |z_t||z|dxds\\
\le&\;\kappa_1\int_0^T\|z_t\|\|z\|ds+8\kappa_1\int_0^T\int_{\Omega}\left[|u^1_t|^4+|u^2_t|^4\right]|z_t||z|dxds\\
\le&\;\int_0^T\|z_t\|^2ds+\frac{\kappa_1^2}{4}\int_0^T\|z\|^2ds+8\kappa_1\int_0^T\int_{\Omega}\left[|u^1_t|^4+|u^2_t|^4\right]|z_t||z|dxds\\
\le &\;\frac{1}{\kappa_2} \int_0^T\int_{\Omega}D(z_t(s))dxds+\frac{\kappa_1^2}{4}\int_0^T\|z\|^2ds+\underbrace{8\kappa_1\int_0^T\int_{\Omega}\left[|u^1_t|^4+|u^2_t|^4\right]|z_t||z|dxds}_{J_4'}
\end{aligned}
\end{eqnarray}	
where from H\"older's inequality with $\frac{1}{2}+\frac{1}{2}=1$ and $\frac{2}{3}+\frac{1}{3}=1$, embedding $H^1\hookrightarrow L^6(\Omega)$, \eqref{est2-7}, and \eqref{hyp-g'-22}, we have
\begin{eqnarray}
\label{j4'}
\begin{aligned}
J'_4\le&\;8\kappa_1\int_0^T\left(\int_{\Omega}\left[|u^1_t|^4+|u^2_t|^4\right]|z_t|^2dx\right)^{1/2}\left(\int_{\Omega}\left[|u^1_t|^4+|u^2_t|^4\right]|z|^2dx\right)^{1/2}ds\\
\le&\;4\kappa_1\int_0^T\left[\int_{\Omega}\left[|u^1_t|^4+|u^2_t|^4\right]|z_t|^2dx+\int_{\Omega}\left[|u^1_t|^4+|u^2_t|^4\right]|z|^2dx\right]\\
\le&\;4\kappa_1\int_0^T\int_{\Omega}\left[|u^1_t|^4+|u^2_t|^4\right]|z_t|^2dxds\\
&+4\kappa_1\int_0^T\left(\int_{\Omega}\left[\,|u^1_t|^4+|u^2_t|^4\,\right]^{3/2}dx\right)^{2/3}\left(\int_{\Omega}|z|^6dx\right)^{1/3}ds\\
\le&\;4\kappa_1\int_0^T\int_{\Omega}\left[|u^1_t|^4+|u^2_t|^4\right]|z_t|^2dxds+4\kappa_1\int_0^T\left[\,|u^1_t\|^4_6+\|u^2_t\|^4_6\,\right]\|z\|^2_6ds\\
\le&\;4\kappa_1\int_0^T\int_{\Omega}\left[|u^1_t|^4+|u^2_t|^4\right]|z_t|^2dxds+\frac{4\kappa_1}{\lambda}\int_0^T\left[\,|u^1_t\|^4_6+\|u^2_t\|^4_6\,\right]\|\nabla z\|^2ds\\
\le&\;4\kappa_1\int_0^T\int_{\Omega}\left[|u^1_t|^4+|u^2_t|^4\right]|z_t|^2dxds+\frac{8\kappa_1}{\lambda}\int_0^T\left[\,|u^1_t\|^4_6+\|u^2_t\|^4_6\,\right]E_z(s)ds\\
\le&\;4\kappa_1\int_0^T\int_{\Omega}\left[|u^1_t|^4+|u^2_t|^4\right]|z_t|^2dxds+\frac{32\kappa_1^2}{\lambda^2}\int_0^Td(s,u^1,u^2)E_z(s)ds+\frac{1}{2}\int_0^TE_z(s)ds\\
\le&\;\frac{40\kappa_1}{\kappa_0}\int_0^T\int_{\Omega}D(z_t(s))dxds+\frac{32\kappa_1^2}{\lambda^2}\int_0^Td(s,u^1,u^2)E_z(s)ds+\frac{1}{2}\int_0^TE_z(s)ds.
\end{aligned}
\end{eqnarray}
Replacing \eqref{j0}, \eqref{j3}, \eqref{j4}, and \eqref{j4'} in \eqref{ASP-2}, we obtain that
\begin{eqnarray}\label{ASP-22}
\begin{aligned}
\int_0^TE_z(s)ds\le&\; \frac{2}{\lambda_1^{1/2}}E_z(T)+C_0 \int_0^T\int_{\Omega}D(z_t(s))dxds\\
&+C_1\int_0^T\|z\|^2ds+C'_B\int_0^Td(s,u^1,u^2)E_z(s)ds,
\end{aligned}
\end{eqnarray}
where $C_0=\frac{2}{\kappa_2}+\frac{3}{2\lambda_1^{1/2}}+\frac{1}{\kappa_2}+\frac{40\kappa_1}{\kappa_2}$, $C_1=K_f+\frac{K_f^2}{2\kappa\lambda_1^{1/2}}+\frac{\kappa_1^2}{4}$, and $C'_B=C_B+\frac{32\kappa_1^2}{\lambda^2}$.
Combining \eqref{ASP-1111} and \eqref{ASP-22} we obtain that
\begin{eqnarray}
\begin{aligned}
\label{ASP-33}
\left(T-\frac{4}{\lambda_1^{1/2}}\right)E_z(t)+\left(1-\delta T\right)\int_0^TE(s)ds\le&\;2C_0 \int_0^T\int_{\Omega}D(z_t(s))dxds\\
&+\left(TC_{B,\delta}+2C'_B\right)\int_0^Td(s,u^1,u^2)E_z(s)ds\\
&+\left(2C_1+\frac{K_fT}{2\kappa_2}\right)\int_0^T\|z\|^2ds.
\end{aligned}
\end{eqnarray}
Using without loss of generality that $\frac{T}{2}-\frac{4}{\lambda_1^{1/2}}>0$ and taking $\delta$ small enough such that $1-\delta T\ge \frac{1}{2}$, it follows from \eqref{ASP-33} that
\begin{eqnarray}
\begin{aligned}
\label{ASP-333}
TE_z(T)+\int_0^TE(s)ds\le&\;4C_0 \int_0^T\int_{\Omega}D(z_t(s))dxds+2\left(TC_{B,\delta}+2C'_B\right)\int_0^Td(s,u^1,u^2)E_z(s)ds\\
&+2\left(2C_1+\frac{K_fT}{2\kappa_2}\right)\int_0^T\|z\|^2ds.
\end{aligned}
\end{eqnarray}
From \eqref{ASP-111} with $\tau=0$, we have
\begin{align}
\label{ASP-11111}
\frac{1}{2}\int_{0}^T\int_{\Omega}D(z_t(s))dxds
\le E_z(0)-E_z(T)+\delta\int_{0}^TE_z(s)ds+ C_{B,\delta}\int_{0}^Td(s,u^1,u^2)E_z(s)ds.
\end{align}
Substituting \eqref{ASP-11111} into \eqref{ASP-333} and choosing $\delta=\frac{1}{2C_0}$, we obtain
\begin{eqnarray}
\begin{aligned}
\label{ASP-3333}
E_z(T)\le&\;\frac{2C_0}{T+2C_0} E_z(0)+\frac{4C_0\left(TC_{B,\delta}+2C'_B\right)}{T+2C_0}\int_0^Td(s,u^1,u^2)E_z(s)ds\\
&+\frac{4C_0\left(2C_1+\frac{K_fT}{2\kappa_2}\right)}{T+2C_0}\sup_{s\in[0,T]}\|z(s)\|^2.
\end{aligned}
\end{eqnarray}
Reiterating the estimate on the intervals $[mT,(m+1)T]$ yields
\begin{align*}
E_z((m+1)T)\le \gamma E_z(mT)+C_{B}b_{m},\quad m=0,1,2,\cdots,
\end{align*}
with $0<\gamma\equiv\frac{2C_{0}}{T+2C_{0}}<1$ and $C_B=\frac{4C_0\left(TC_{B,\delta}+2C'_B\right)}{T+2C_0}+\frac{4C_0\left(2C_1+\frac{K_fT}{2\kappa_2}\right)}{T+2C_0}$, where
$$b_m\equiv \sup_{s\in [mT,(m+1)T]}\|z(s)\|^2+\int_{mT}^{(m+1)T}d(s;u^1,u^2)E_z(s)ds.$$
This yields
$$E_z(mT)\le \gamma^{m}E_z(0)+c\sum_{l=1}^m\eta^{m-l}b_{l-1}.$$
Since $\gamma<1$, using the same argument as in (\cite{chueshov-white}, Remark 3.30) along with the definition of $b_l$ we obtain that there exists $\varrho>0$ such that
\begin{align*}
E_z(t)\le C_1e^{-\varrho t}E_z(0)+C_2\left[\,\sup_{0\le s\le t}\|z(s)\|^2+\int_0^te^{-\varrho(t-s)}d(s,u^1,u^2)E_z(s)ds\,\right],
\end{align*}
for all $t\ge 0$. Therefore, applying Gronwall's lemma we find
\begin{align*}
E_z(t)\le \left[\,C_1e^{-\varrho t}E_z(0)+C_2\sup_{0\le s\le t}\|z(s)\|^2\,\right]e^{C_2\int_0^td(s,u^1,u^2)ds}.
\end{align*}
Using that $\frac{1}{2}||U^1-U^2||^2_{\mathcal{H}}\le E_z(t)\le C_B||U^1-U^2||^2_{\mathcal{H}}$, we have
\begin{align*}
||U^1-U^2||^2_{\mathcal{H}}\le b(t)||U^1_0-U^2_0||^2_{\mathcal{H}}+c(t)\sup_{0\le s\le t}\|z(s)\|^2.
\end{align*}
where
\begin{align}b(t):=2C_BC_1e^{-\varrho t}e^{C_2\int_0^td(s,u^1,u^2)ds}\quad \mbox{and}\quad c(t):=2C_2e^{C_2\int_0^td(s,u^1,u^2)ds}.\label{definiton-c(t)}\end{align}
Thus, using that  $d(s,u^1,u^2)=\|u^1_t\|^6_6+\|u^2_t\|^6_6\in L^1(0,t)$,  {\bf uniformly in $t>0$} we obtain
$$b(t)\in L^1(\mathbb{R}^+)\quad \mbox{and}\quad\lim_{t\rightarrow +\infty}b(t)=0$$
and $c(t)$ is bounded on $\mathbb{R}^+$ due to $L^1(\mathbb{R}^+)$ integrability  of $d(s,u^1,u^2) $.
The proof of Proposition \ref{stabilizability-estimate} is  now complete.
\end{proof}

\fi

\end{document}